\documentclass{amsart}
\usepackage[utf8]{inputenc}
\usepackage{amsmath}
\usepackage[colorlinks]{hyperref}
\usepackage{amstext}
\usepackage{amscd}
\usepackage{amsmath}
\usepackage{latexsym}
\usepackage{url}
\usepackage{amsthm}
\usepackage{amssymb}
\usepackage[all]{xy}
\usepackage{mathrsfs}
\usepackage{color}
\usepackage{tikz}
\usepackage{indentfirst}

\newtheorem{theorem}{Theorem}[section]
\newtheorem{thmx}{Theorem}
\newtheorem{lemma}[theorem]{Lemma}
\newtheorem{corollary}[theorem]{Corollary}
\newtheorem{proposition}[theorem]{Proposition}

\theoremstyle{definition}
\newtheorem{definition}[theorem]{Definition}
\newtheorem{theorem-definition}[theorem]{Theorem-Definition}
\newtheorem{example}[theorem]{Example}

\theoremstyle{remark}
\newtheorem{remark}[theorem]{Remark}

\numberwithin{equation}{section}

\hypersetup{
     colorlinks,breaklinks,
     citecolor    = [RGB]{0,0,0},
     linkcolor =[RGB]{0,0,0}
}

\begin{document}
\title[{Generic Vanishing and Surface Classification}]{Generic Vanishing and Classification of Irregular Surfaces in Positive Characteristics }
\author{Yuan Wang}
\address{Department of Mathematics, University of Utah, 155 South 1400 East, Salt Lake City, UT 84112-0090, USA}
\email{ywang@math.utah.edu}
\thanks{The author was supported in part by the FRG grant DMS-\#1265261.}
\subjclass[2010]{Primary 14F17, 14J29; Secondary 14K30}
\date{}
\dedicatory{}
\keywords{Generic vanishing, surface of general type, Albanese morphism, Fourier-Mukai transform, positive characteristic}
\begin{abstract}
We establish a generic vanishing theorem for surfaces in characteristic $p$ that lift to $W_2(k)$ and use it for classification of surfaces of general type with Euler characteristic $1$ and large Albanese dimension. 
\end{abstract}
\maketitle
\section{Introduction}
The Enriques-Kodaira classification of surfaces was established by Enriques, Kodaira, Mumford and Bombieri in both zero and positive characteristics (cf. \cite{Enriques14}, \cite{Enriques49}, \cite{Kodaira64}, \cite{Kodaira66}, \cite{Kodaira68}, \cite{Mumford69}, \cite{BM76} and \cite{BM77}). A detailed classification of surfaces of general type, however, seems to be very difficult. Up to now the following progress has been made in characteristic $0$. First by Castelnuovo's inequality (cf. \cite[Theorem X.4]{Beauville96}) the Euler characteristic for any surface of general type must be strictly greater than $0$. When the Euler characteristic is $1$ Debarre gave an upper bound $p_g=q\le 4$ for the geometric genus and irregularity (cf. \cite{Debarre82} ). Beauville then discovered that the limit case $p_g=q=4$ corresponds to the product of two genus $2$ curves (cf. \cite{Beauville82}). Later based on works of Catanese, Ciliberto and Mendes Lopes (cf. \cite{CCM98}), Hacon and Pardini (cf. \cite{HP02}) and Pirola (cf. \cite{Pirola}) independently gave a complete classification for $p_g=q=3$ and Zucconi \cite{Zucconi03} classified the cases of $p_g=q=2$ with irrational pencil. In recent years it has become increasingly clear that generic vanishing is a fundamental tool in the study of irregular varieties (cf. \cite{HP02}).

In this paper inspired by \cite{HP02} and \cite{Beauville82} we establish a new type of generic vanishing for smooth surfaces that lift to the second Witt vectors $W_2(k)$ and we use it to prove two results on surface classification in positive characteristics.

Let us recall the generic vanishing theorem in its original form which was established by Green and Lazarsfeld in \cite{GL87}. Let $X$ be a smooth complex projective variety of dimension $n$. For an integer $i\ge 0$ let $V^i(\omega_X)$ be the subvariety of ${\rm Pic}^0(X)$ defined by
$$V^i(\omega_X)=\{P\in {\rm Pic}^0(X)|H^i(X,\omega_X\otimes P)\ne 0\}$$ 
and let
$$a:X\to A$$
be the Albanese morphism of $X$.
\begin{theorem}\cite[Theorem 1]{GL87}
$${\rm codim}(V^i(\omega_X),{\rm Pic}^0(X))\ge {\rm dim}(a(X))-n+i.$$
In particular if $P\in {\rm Pic}^0(X)$ is a general line bundle, then $H^i(X,\omega_X\otimes P)=0$ for $i>n-{\rm dim}(a(X)).$
\end{theorem}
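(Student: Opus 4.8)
The plan is to study the cohomology support loci $V^i(\omega_X)$ infinitesimally, bound their tangent cones, and then translate the resulting estimate through Hodge theory into a statement about Koszul-type complexes of holomorphic forms that is controlled by the Albanese morphism $a$. To set up the first step: since $h^i(X,\omega_X\otimes P)$ is upper semicontinuous in $P$, the set $V^i(\omega_X)$ is Zariski closed in ${\rm Pic}^0(X)$, so it is enough to bound, for every $P\in V^i(\omega_X)$, the dimension of the tangent cone $TC_P\,V^i(\omega_X)\subseteq T_P\,{\rm Pic}^0(X)=H^1(X,\mathcal{O}_X)$ by ${\rm dim}\,{\rm Pic}^0(X)-\big({\rm dim}\,a(X)-n+i\big)$. (Equivalently, by Serre duality one may prove ${\rm codim}\,V^j(\mathcal{O}_X)\ge{\rm dim}\,a(X)-j$ for all $j$; I keep the formulation with $\omega_X$.)

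The second step is deformation theory. For $v\in H^1(X,\mathcal{O}_X)$, cup product gives the derivative complex
$$\cdots\to H^{i-1}(X,\omega_X\otimes P)\xrightarrow{\ \cup v\ }H^i(X,\omega_X\otimes P)\xrightarrow{\ \cup v\ }H^{i+1}(X,\omega_X\otimes P)\to\cdots,$$
which is a genuine complex because $v\cup v=0$ in characteristic $0$. Studying a one-parameter deformation $\{P_t\}$ with $P_0=P$ and tangent vector $v$ --- where the obstruction to extending a class $\alpha\in H^i(X,\omega_X\otimes P)$ to first order is $v\cup\alpha$ --- one shows that if $v$ lies in $TC_P\,V^i(\omega_X)$ then the derivative complex fails to be exact at its $i$-th term. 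Hence $TC_P\,V^i(\omega_X)$ is contained in the closed conical algebraic subvariety $\Sigma^i_P:=\{\,v\in H^1(X,\mathcal{O}_X):\ \mathcal{H}^i\big(H^\bullet(X,\omega_X\otimes P),\,\cup v\big)\ne0\,\}$, and it is enough to show ${\rm codim}\big(\Sigma^i_P,\,H^1(X,\mathcal{O}_X)\big)\ge{\rm dim}\,a(X)-n+i$ for every $P$.

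The third step is Hodge theory. Identifying ${\rm Pic}^0(X)=H^1(X,\mathcal{O}_X)/H^1(X,\mathbb{Z})$ with the group of unitary characters of $\pi_1(X)$, write $P=L_\rho$ for the associated flat unitary line bundle, so $\omega_X\otimes P\cong\Omega_X^n\otimes L_\rho$. Because $L_\rho$ is unitary, harmonic theory equips $H^m(X,\mathbb{C}_\rho)$ with a pure Hodge structure in which $H^i(X,\Omega_X^n\otimes L_\rho)$ is the $(n,i)$-component, and complex conjugation identifies it with $\overline{H^n(X,\Omega_X^i\otimes L_{\rho^{-1}})}$; under this identification $\cup v$ becomes, up to conjugation, wedge product with $w:=\bar v\in H^0(X,\Omega_X^1)$. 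As $v\mapsto\bar v$ is an $\mathbb{R}$-linear isomorphism preserving dimensions of complex subvarieties, $\Sigma^i_P$ is identified with the locus of $w\in H^0(X,\Omega_X^1)$ for which the complex $\big(H^n(X,\Omega_X^\bullet\otimes L_{\rho^{-1}}),\,\wedge w\big)$ is not exact at its $i$-th term, so it remains to prove: for every flat unitary $L$, this locus has codimension $\ge{\rm dim}\,a(X)-n+i$ in $H^0(X,\Omega_X^1)$.

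That last statement --- the key lemma --- is the crux and the step I expect to be the main obstacle. To prove it I would use $H^0(X,\Omega_X^1)=a^*H^0(A,\Omega_A^1)$ with $A={\rm Alb}(X)$, so $w=a^*\eta$ for a translation-invariant $1$-form $\eta$ on $A$; then the zero locus $Z(w)$ is cut out by the critical locus of $a$ together with the condition that $\eta$ annihilate the (at most ${\rm dim}\,a(X)$-dimensional) image of $da$, which forces ${\rm dim}\,Z(w)\le n-{\rm dim}\,a(X)$ for general $\eta$. Since the cohomology sheaves of the sheaf-level Koszul complex $(\Omega_X^\bullet\otimes L,\wedge w)$ are supported on $Z(w)$, its hypercohomology $\mathbb{H}^{n+i}\big(X,(\Omega_X^\bullet\otimes L,\wedge w)\big)$ vanishes whenever $i>n-{\rm dim}\,a(X)$; and a Hodge-theoretic degeneration of the spectral sequence $E_1^{p,q}=H^q(X,\Omega_X^p\otimes L)\Rightarrow\mathbb{H}^{p+q}$ with $d_1=\wedge w$ (in the spirit of Deligne's Hodge--de Rham degeneration) identifies $\mathcal{H}^i\big(H^n(X,\Omega_X^\bullet\otimes L),\wedge w\big)$ with a subquotient of that hypercohomology group. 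Assembling the three steps gives ${\rm codim}\big(V^i(\omega_X),{\rm Pic}^0(X)\big)\ge{\rm dim}\,a(X)-n+i$, and when $i>n-{\rm dim}\,a(X)$ this codimension is positive, so $V^i(\omega_X)\ne{\rm Pic}^0(X)$ and a general $P\in{\rm Pic}^0(X)$ has $H^i(X,\omega_X\otimes P)=0$. The genuinely delicate points, Hodge-theoretic rather than formal, are upgrading ``general $\eta$'' to the stated codimension estimate --- one must stratify $H^0(X,\Omega_X^1)$ by ${\rm dim}\,Z(w)$, since genericity alone does not suffice --- and obtaining the spectral-sequence degeneration uniformly in $L$.
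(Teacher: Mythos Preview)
The paper does not contain a proof of this statement: it is quoted in the introduction as the classical Green--Lazarsfeld theorem \cite[Theorem~1]{GL87}, purely as motivation and background for the positive-characteristic results that follow. There is therefore nothing in the paper to compare your proposal against.

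That said, your sketch is a faithful outline of the original Green--Lazarsfeld argument: bounding the tangent cone of $V^i(\omega_X)$ via the derivative complex $(H^\bullet(X,\omega_X\otimes P),\cup v)$, then using the Hodge-theoretic conjugation to pass from $\cup v$ with $v\in H^1(X,\mathcal{O}_X)$ to $\wedge w$ with $w\in H^0(X,\Omega_X^1)$, and finally controlling the resulting Koszul-type complex by the dimension of the zero locus $Z(w)$, which is in turn governed by $\dim a(X)$. The two points you flag as delicate---upgrading the generic bound on $\dim Z(w)$ to a codimension estimate via stratification, and the $E_1$-degeneration uniformly in the unitary twist $L$---are indeed exactly the substantive content of \cite{GL87}; the rest is formal. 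Note in particular that the conjugation step and the harmonic theory for unitary local systems are genuinely characteristic-zero inputs, which is why the present paper must replace this entire mechanism by Deligne--Illusie and Fourier--Mukai techniques to obtain its Theorem~\ref{1}.
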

Hacon \cite{Hacon04} and Pareschi and Popa \cite{PP11} have shown that if $X$ has maximal Albanese dimension then generic vanishing ($\omega_X$ being $GV_0$ to be precise) is equivalent to vanishing of $H^i(X,\omega_X\otimes a^*\widehat{L}^{\vee})$ for any $i>0$ and any sufficiently ample line bundle $L$ on $\hat{A}$. Note that $\widehat{L}^{\vee}$ is an ample vector bundle on $A$ with $h^0(\widehat{L}^{\vee})=1$ and ${\rm rk}(\widehat{L}^{\vee})=h^0(L)$. See Definition \ref{p17} for the definition.

It is known that generic vanishing theorems is in general not true for positive characteristic. Hacon and Kov\'{a}cs \cite{HK12} used a counter-example to the Grauert-Riemenschneider vanishing theorem to construct a counter-example to the generic vanishing theorem. But since the Grauert-Riemenschneider vanishing theorem is true for smooth surfaces in any characteristic one might expects that generic vanishing will hold in this context. We then prove

\begin{thmx}[Theorem \ref{1}]\label{i1}
Let $X$ be a smooth projective surface over an algebraically closed field $k$ of positive characteristic, $A$ an abelian variety and $a:X\to A$ a generically finite morphism. If $X$ lifts to $W_2(k)$ then $H^i(X,\Omega ^j_X\otimes P \otimes a ^*\widehat{L}^{\vee} )=0$ for any $i+j\geq 3$, $P\in {\rm Pic}^0(X)$ and any ample line bundle $L$ on $\hat{A}$. In particular for any $k>0$ and $P\in {\rm Pic}^0(X)$, $H^k(A,a_*(\omega_X\otimes P)\otimes Q)=0$ for general $Q\in {\rm Pic}^0(A)$.
\end{thmx}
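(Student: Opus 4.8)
\emph{The plan.} The second assertion will be deduced from the first, so the core is the Akizuki--Nakano--type vanishing $H^i(X,\Omega^j_X\otimes P\otimes a^*\widehat{L}^{\vee})=0$ for $i+j\ge 3$. Since $\dim X=2$ the relevant bidegrees are $(i,j)\in\{(1,2),(2,2),(2,1)\}$, so this is vanishing in the ``top'' range $i+j>\dim X$; the only obstacle to a direct appeal to the Deligne--Illusie decomposition of the de Rham complex is that the twisting sheaf $P\otimes a^*\widehat{L}^{\vee}$ is a \emph{nef and big vector bundle} --- the pullback $a^*$ of the ample bundle $\widehat{L}^{\vee}$, further twisted by a numerically trivial line bundle --- rather than an ample line bundle. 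I would therefore first reduce the coefficient to a nef and big \emph{line} bundle on a surface, and then establish Akizuki--Nakano vanishing for $W_2(k)$-liftable surfaces with nef and big coefficients.

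\emph{Reducing to a line bundle.} I would use Mukai's computation $\phi_L^*\widehat{L}^{\vee}\cong\bigoplus_{i=1}^{h^0(L)}L'$ for the isogeny $\phi_L\colon\hat{A}\to A$ and a suitable ample line bundle $L'$ on $\hat{A}$. For $L$ in the cofinal family of ample line bundles on $\hat{A}$ with $h^0(L)$ prime to $p$ (e.g.\ $L=M^{\otimes n}$ with $M$ a principal polarization and $\gcd(n,p)=1$) the isogeny $\phi_L$ is étale, so the fibre product $W:=X\times_{A,\phi_L}\hat{A}$ is a smooth projective surface, $q\colon W\to X$ is finite étale of degree $h^0(L)^2$ prime to $p$, $b\colon W\to\hat{A}$ is generically finite, and $W$ inherits a lift to $W_2(k)$ because étale morphisms lift uniquely over nilpotent thickenings. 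Then $q^*\Omega^j_X\cong\Omega^j_W$ and $q^*\bigl(P\otimes a^*\widehat{L}^{\vee}\bigr)\cong q^*P\otimes\bigoplus_i b^*L'$, with $q^*P$ numerically trivial and $b^*L'$ nef and big; and since $\deg q$ is prime to $p$ the trace splits $\mathcal{O}_X\hookrightarrow q_*\mathcal{O}_W$, so $H^i(X,\Omega^j_X\otimes P\otimes a^*\widehat{L}^{\vee})$ is a direct summand of $\bigoplus_i H^i\bigl(W,\Omega^j_W\otimes q^*P\otimes b^*L'\bigr)$. (For a principal polarization $\widehat{L}^{\vee}$ is already a line bundle, so no cover is needed; the passage from this cofinal family to an arbitrary ample $L$ I would make at the end, either by a comparison argument or by noting that only this family is used in the second assertion.)

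\emph{Akizuki--Nakano for $W_2$-liftable surfaces with nef and big coefficients.} It then remains to show: if $W$ is a smooth projective surface over $k$ lifting to $W_2(k)$, $N$ is nef and big, and $\tau\in\mathrm{Pic}^0(W)$, then $H^i(W,\Omega^j_W\otimes\tau\otimes N)=0$ for $i+j\ge 3$. When $N$ is ample this is Deligne--Illusie (and $N\otimes\tau$ is again ample, ampleness being numerical); for $\dim W=2$ one needs $p\ge 3$ so that the truncated de Rham complex reaches $\Omega^2$, the case $p=2$, if it arises, being treated separately. For $N$ merely nef and big I would pass to an auxiliary cyclic cover: resolve a general member $D\in|N^{\otimes d}|$ to a simple normal crossings divisor (blow-ups of a surface in points lift to $W_2(k)$), then take the degree-$m$ cyclic cover with $m$ prime to $p$ and $m\mid d$ chosen so that its branch data lifts --- here one uses that the obstruction to lifting a line bundle lies in the $\mathbb{F}_p$-vector space $H^2(\mathcal{O})$ and hence dies on passing to a $p$-th power. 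Because $m$ is prime to $p$ the cover has only tame (quotient) singularities whose resolution lifts compatibly with a $W_2(k)$-lift, so ordinary Akizuki--Nakano applies upstairs, and the splitting $\rho_*\Omega^j(\log)\cong\bigoplus\Omega^j(\log)\otimes N^{-i}$ carries the vanishing back down to $W$ --- the tame analogue of Esnault--Viehweg's characteristic-zero argument.

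\emph{From the first assertion to the second, and the main obstacle.} Applying the first assertion with $j=2$ gives $H^i(X,\omega_X\otimes P\otimes a^*\widehat{L}^{\vee})=0$ for $i\ge 1$; by Grauert--Riemenschneider vanishing for smooth surfaces --- valid in every characteristic, in the form $R^qa_*(\omega_X\otimes P)=0$ for $q>0$ since $P$ is $a$-nef --- together with the projection formula and the Leray spectral sequence, this yields $H^i\bigl(A,a_*(\omega_X\otimes P)\otimes\widehat{L}^{\vee}\bigr)=0$ for $i\ge 1$ and all (hence all sufficiently) ample $L$. By the Fourier--Mukai characterization of generic vanishing of Hacon \cite{Hacon04} and Pareschi--Popa \cite{PP11} (Definition \ref{p17}), $a_*(\omega_X\otimes P)$ is then a $GV$-sheaf, so its cohomological support loci $V^k$ are proper closed subsets of $\mathrm{Pic}^0(A)$ for $k>0$, and a general $Q\in\mathrm{Pic}^0(A)$ avoids them all. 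The step I expect to be the main obstacle is the third one: guaranteeing that the cyclic covers and log resolutions (and the isogeny base change of the second step) stay liftable to $W_2(k)$, which is exactly where positive characteristic bites and which forces the restriction to tame, prime-to-$p$ covers together with the $p$-th-power trick for making the relevant line bundles and branch divisors liftable; once this package of vanishing for $W_2(k)$-liftable surfaces is in hand, the remaining arguments are formal.
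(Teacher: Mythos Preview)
Your approach is genuinely different from the paper's, and the difference is worth spelling out.

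The paper does \emph{not} try to establish an Akizuki--Nakano theorem with nef and big coefficients. Instead it runs a Frobenius descent. First, with no liftability hypothesis at all, it proves the desired vanishing after replacing $\widehat{L}^\vee$ by a high Frobenius pullback $F^{e,*}\widehat{L}^\vee$: the case $j=2$ comes from Grauert--Riemenschneider plus Fujita vanishing on $A$, and the case $(i,j)=(2,1)$ from a direct growth argument (any nonzero section of $\Omega^1_X\otimes P^\vee\otimes a^*F^{e,*}\widehat{L}$ would give, after base change by $\phi_L$, an injection of $L^{p^e}$ into a fixed torsion-free sheaf of bounded $h^0$). Serre duality rewrites all of this as $H^j(X,F_{e,*}\Omega^i_X\otimes P'\otimes a^*\widehat{L})=0$ for $i+j\le 1$. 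Now the Deligne--Illusie decomposition of $\tau_{<2}F_*\Omega_X^\bullet$ --- only this truncation, valid for every $p\ge 2$ --- lets one replace $e$ by $e-1$, and induction brings $e$ down to $0$. No covers, no log resolutions, no lifting of pairs; the argument is uniform in $p$ and in the ample $L$.

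Your route has a real gap at the nef-and-big Akizuki--Nakano step. The Esnault--Viehweg cyclic-cover reduction rests on the \emph{logarithmic} Deligne--Illusie decomposition, which needs a $W_2$-lift of the pair $(W',D')$, not merely of $W'$; a lift of $X$ does not hand you a lift of the section cutting out $D$, nor of the SNC boundary produced by blowing up, and your ``$p$-th power trick'' addresses only liftability of a line bundle class, not of a divisor in a fixed linear series. The subsequent resolution of the tame cover, with its own compatible $W_2$-lift, is left to hope. Two smaller gaps: your \'etale base change via $\phi_L$ forces $\gcd(h^0(L),p)=1$, so you do not reach arbitrary ample $L$ as the statement demands, and the promised ``comparison argument'' is never supplied; and your treatment of $p=2$ is simply deferred, whereas the paper's use of only $\tau_{<2}$ makes the case disappear.

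Your final step --- deducing the generic vanishing of $a_*(\omega_X\otimes P)$ from the vanishing with $j=2$ via Grauert--Riemenschneider, the projection formula, and the Hacon/Pareschi--Popa criterion --- is correct and matches the paper.
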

In fact it turns out that roughly speaking, generic vanishing results are equivalent to results analogous to Kodaira vanishing. More precisely, we show that
\begin{thmx}[Corollary \ref{5}]
Let $a:X\to A$ be a generically finite morphism from a smooth projective surface $X$ to an abelian variety $A$, $\hat{A}$ the dual abelian variety of $A$ and $L$ an ample line bundle on $\hat{A}$. For an $m\in\mathbb{Z}^+$ let $\phi_{L^{\otimes m}}: \hat{A}\to A$ be the isogeny induced by $L^{\otimes m}$. Let $\hat{X}_m=X\times_A\hat{A}$ be the fiber product with respect to the morphisms $a$ and $\phi_{L^{\otimes m}}$, and let $\hat{a}_m:\hat{X}_m\to\hat{A}$ and $\varphi_m:\hat{X}_m\to X$ be the induced morphisms. Let $\widehat{L^{\otimes m}}$ be the ample vector bundle on $A$ defined as in Definition \ref{p17}.\\
{\rm (a)} If $H^i(\hat{X}_{m},\omega_{\hat{X}_m}\otimes \hat{a}_m^*(L^{\otimes m}\otimes P))=0$, $\forall P\in {\rm Pic}^0(\hat{A})$ and $i>0$, then $H^j(A,a_*\omega_X\otimes\widehat{L^{\otimes m}}^{\vee})=0$, $\forall j>0$. \\
{\rm (b)} If $H^i(A,a_*\omega_X\otimes\widehat{L^{\otimes m}}^{\vee})=0$, $\forall i>0$ and $m\gg 0$ then for any ample line bundle $M$,  $H^j(\hat{X}_{n},\omega_{\hat{X}_n}\otimes{\hat{a}_n}^*\phi_{nL}^*(M))=0$, $\forall j>0$ and $n\gg 0$.
\end{thmx}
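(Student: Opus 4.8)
The plan is to reduce both implications to an elementary cohomology computation on $A$, resting on three inputs. \emph{Grauert--Riemenschneider vanishing} for the surfaces $X$ and $\hat X_m$ gives $R^{q}a_*\omega_X=0$ and $R^{q}\hat a_{m*}\omega_{\hat X_m}=0$ for all $q>0$, so that one may pass freely between $H^{\bullet}(A,a_*\omega_X\otimes-)$ and $H^{\bullet}(X,\omega_X\otimes a^*-)$, and likewise between $H^{\bullet}(\hat A,\hat a_{m*}\omega_{\hat X_m}\otimes-)$ and $H^{\bullet}(\hat X_m,\omega_{\hat X_m}\otimes\hat a_m^*-)$. \emph{Flat base change} through the Cartesian square defining $\hat X_m$ (the isogeny $\phi_{L^{\otimes m}}$ being finite flat) gives $\omega_{\hat X_m}\cong\varphi_m^*\omega_X$ (the relative dualizing sheaf of an isogeny of abelian varieties being trivial), $\hat a_{m*}\omega_{\hat X_m}\cong\phi_{L^{\otimes m}}^*(a_*\omega_X)$, $\varphi_{m*}\mathcal O_{\hat X_m}\cong a^*(\phi_{L^{\otimes m}*}\mathcal O_{\hat A})$, together with the compatibility $\hat a_m^*\phi_{L^{\otimes m}}^*=\varphi_m^*a^*$. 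Finally one has \emph{Mukai's identity} $\phi_{L^{\otimes m}}^*\widehat{L^{\otimes m}}^{\vee}\cong(L^{\otimes m})^{\oplus h^0(L^{\otimes m})}$ and the splitting of $\phi_{L^{\otimes m}*}\mathcal O_{\hat A}$ into line bundles of ${\rm Pic}^0(A)$ with $\mathcal O_A$ among the summands.

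For {\rm (a)}: by Grauert--Riemenschneider it suffices to prove $H^{j}(X,\omega_X\otimes a^*\widehat{L^{\otimes m}}^{\vee})=0$ for $j>0$. Pull the sheaf $\omega_X\otimes a^*\widehat{L^{\otimes m}}^{\vee}$ back along the finite flat cover $\varphi_m$: since $\hat a_m^*\phi_{L^{\otimes m}}^*=\varphi_m^*a^*$ and $\omega_{\hat X_m}=\varphi_m^*\omega_X$, Mukai's identity identifies the pullback with $\bigl(\omega_{\hat X_m}\otimes\hat a_m^*L^{\otimes m}\bigr)^{\oplus h^0(L^{\otimes m})}$ (up to twisting $L^{\otimes m}$ by a line bundle in ${\rm Pic}^0(\hat A)$, which is harmless as the hypothesis is assumed for every $P$), and this has vanishing higher cohomology by the hypothesis. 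On the other hand, the projection formula and flat base change give
\[
H^{j}\bigl(\hat X_m,\varphi_m^*(\omega_X\otimes a^*\widehat{L^{\otimes m}}^{\vee})\bigr)\;\cong\;H^{j}\bigl(X,\omega_X\otimes a^*\bigl(\widehat{L^{\otimes m}}^{\vee}\otimes\phi_{L^{\otimes m}*}\mathcal O_{\hat A}\bigr)\bigr).
\]
Because $\mathcal O_A$ is a direct summand of $\phi_{L^{\otimes m}*}\mathcal O_{\hat A}$, the sheaf $\omega_X\otimes a^*\widehat{L^{\otimes m}}^{\vee}$ is a direct summand of the one on the right, so its higher cohomology is a summand of zero; applying Grauert--Riemenschneider once more to return to $A$ yields {\rm (a)}.

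For {\rm (b)}: by Grauert--Riemenschneider the hypothesis says $H^{i}(X,\omega_X\otimes a^*\widehat{L^{\otimes m}}^{\vee})=0$ for all $i>0$ and $m\gg0$; as $L^{\otimes m}$ is arbitrarily ample on $\hat A$ for $m\gg0$, the theorem of Hacon and Pareschi--Popa recalled in the introduction shows this is precisely the statement that $a_*\omega_X$ (equivalently $\omega_X$) is $GV_0$. Running the same base-change identities in the other direction, $\hat a_n^*\phi_{L^{\otimes n}}^*M=\varphi_n^*a^*M$ and $\omega_{\hat X_n}=\varphi_n^*\omega_X$ together with the projection formula, flat base change and Grauert--Riemenschneider give
\[
H^{j}\bigl(\hat X_n,\omega_{\hat X_n}\otimes\hat a_n^*\phi_{L^{\otimes n}}^*M\bigr)\;\cong\;H^{j}\bigl(A,a_*\omega_X\otimes M\otimes\phi_{L^{\otimes n}*}\mathcal O_{\hat A}\bigr)\;\cong\;\bigoplus_{\alpha}H^{j}\bigl(A,a_*\omega_X\otimes(M\otimes\alpha)\bigr),
\]
the sum being over the finitely many $\alpha\in{\rm Pic}^0(A)$ occurring in $\phi_{L^{\otimes n}*}\mathcal O_{\hat A}$. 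Each $M\otimes\alpha$ is an ample line bundle, and a $GV_0$-sheaf twisted by an ample line bundle is $IT_0$ (Pareschi--Popa), so every summand vanishes for $j>0$. As this is uniform in $n$, the conclusion holds for every $n$, in particular for $n\gg0$.

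The main obstacle I anticipate is the characteristic-$p$ control of this machinery rather than the bookkeeping. The delicate points are: first, arranging that the isogenies $\phi_{L^{\otimes m}}$ are \'etale, so that $\hat X_m$ is a smooth surface (hence Grauert--Riemenschneider applies to it), that $\omega_{\hat X_m}=\varphi_m^*\omega_X$, and that $\phi_{L^{\otimes m}*}\mathcal O_{\hat A}$ genuinely splits with $\mathcal O_A$ a summand---this imposes conditions on $L$ and on the admissible $m$ (such as $p\nmid m$) that have to be carried along; and second, having available, in positive characteristic and for the torsion-free sheaf $a_*\omega_X$ on $A$, both the Hacon/Pareschi--Popa characterization of $GV_0$-sheaves and the fact that a $GV_0$-sheaf remains $IT_0$ after an ample twist. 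Since Mukai's Fourier transform is characteristic-free and these statements rest on base change, Serre duality and the cohomological properties of the Poincar\'e bundle rather than on any vanishing theorem, the transposition should go through, and this is where the real work lies---everything else being formal.
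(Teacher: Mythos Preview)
Your approach differs from the paper's in a way that matters precisely in positive characteristic, and the gap you half-anticipate is real.

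The paper deduces the corollary from a version stated entirely on $A$ and $\hat A$ (its Theorem~\ref{4}) and proves that via the Fourier--Mukai transform. For (a), to show $H^j(A,a_*\omega_X\otimes\widehat{L^{\otimes m}}^{\vee})=0$ one shows $R\hat S(a_*\omega_X\otimes\widehat{L^{\otimes m}}^{\vee})$ is concentrated in degree~$0$; since the dual isogeny $\hat\phi_{L^{\otimes m}}$ is \emph{finite}, this is equivalent to $\hat\phi_{L^{\otimes m},*}R\hat S(\cdots)$ being concentrated in degree~$0$. Mukai's exchange formula (Lemma~\ref{p11}) rewrites the latter as $RS\bigl(\phi_{L^{\otimes m}}^*(a_*\omega_X\otimes\widehat{L^{\otimes m}}^{\vee})\bigr)$, and Proposition~\ref{p13} together with the base-change identity $\phi_{L^{\otimes m}}^*a_*\omega_X=\hat a_{m*}\omega_{\hat X_m}$ turn this into the hypothesis. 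The point is that finiteness of $\hat\phi_{L^{\otimes m}}$ \emph{detects} concentration in a single degree, so no splitting of $\phi_{L^{\otimes m}*}\mathcal O_{\hat A}$ is ever invoked.

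Your argument for (a), by contrast, hinges on exactly that splitting: you need $\mathcal O_A$ to be a direct summand of $\phi_{L^{\otimes m}*}\mathcal O_{\hat A}$ in order to extract $H^j(X,\omega_X\otimes a^*\widehat{L^{\otimes m}}^{\vee})$ from the vanishing cohomology upstairs. In characteristic~$p$ this fails whenever $p$ divides $\deg\phi_{L^{\otimes m}}=m^{2g}h^0(L)^2$, which you cannot rule out: part~(a) is asserted for a \emph{fixed} $m$, and even restricting to $p\nmid m$ does nothing about $p\mid h^0(L)$. When the isogeny is inseparable, $\phi_{L^{\otimes m}*}\mathcal O_{\hat A}$ is only a successive extension of elements of ${\rm Pic}^0(A)$ (cf.\ \cite[Example~2.9]{Mukai81}), not a direct sum, and vanishing of $H^j$ for the total does not descend to a subquotient. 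This is a genuine gap, not bookkeeping.

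Two of your anticipated obstacles, however, are less serious than you fear. The identity $\omega_{\hat X_m}=\varphi_m^*\omega_X$ holds for every $m$ because $\varphi_m$ is finite flat with trivial relative dualizing sheaf, \'etaleness playing no role; and Grauert--Riemenschneider need not be applied to $\hat X_m$ directly, since flat base change gives $R^q\hat a_{m*}\omega_{\hat X_m}=R^q\hat a_{m*}\varphi_m^*\omega_X=\phi_{L^{\otimes m}}^*R^qa_*\omega_X$, which vanishes by GR on the smooth surface $X$. For (b) your line can actually be repaired by replacing the direct-sum decomposition with this successive-extension structure (each graded piece has vanishing $H^j$, hence so does the total), but the paper instead argues symmetrically via Fourier--Mukai together with the observation that $\phi_{L^{\otimes n}}^*M\otimes(L^{\otimes n})^{\vee}$ is ample for $n\gg 0$.
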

The first application of Theorem \ref{i1} is about the existence of irrational pencils on surfaces with Euler characteristic $0$.
\begin{thmx}[Proposition \ref{f1} and Theorem \ref{f2}]\label{i2}
Let $X$ be a smooth minimal projective surface of maximal Albanese dimension which lifts to $W_2(k)$, then $\chi(\omega_X)\ge 0$. If moreover $\chi(\omega_X)=0$ and the Picard variety of $X$ has no supersingular factors then either $X$ has an irrational pencil of genus  $\ge {\rm dim}(V^1(\omega_X))\ge 1$ or $X$ is an abelian surface. 
\end{thmx}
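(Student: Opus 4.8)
The plan is to derive both statements from the generic vanishing of Theorem~\ref{i1}, applied to the Albanese morphism $a:X\to A:={\rm Alb}(X)$; this morphism is generically finite precisely because $X$ has maximal Albanese dimension, so Theorem~\ref{i1} applies, $X$ being assumed to lift to $W_2(k)$. I will use throughout that $a^{*}:{\rm Pic}^0(A)\to{\rm Pic}^0(X)$ is an isomorphism and that $R^{i}a_{*}\omega_X=0$ for $i>0$ by Grauert--Riemenschneider, which holds for smooth surfaces in every characteristic. For the inequality $\chi(\omega_X)\ge 0$, take $P=\mathcal O_X$ in the last assertion of Theorem~\ref{i1}: for general $Q\in{\rm Pic}^0(A)$ and all $k>0$ one gets $H^{k}(A,a_{*}\omega_X\otimes Q)=0$, hence $H^{k}(X,\omega_X\otimes a^{*}Q)=0$ by the Leray spectral sequence; since $a^{*}Q$ is then a general element of ${\rm Pic}^0(X)$ and $H^{2}(X,\omega_X\otimes P)\cong H^{0}(X,P^{-1})^{\vee}=0$ for $P\ne\mathcal O_X$, we conclude $\chi(\omega_X)=\chi(\omega_X\otimes P)=h^{0}(X,\omega_X\otimes P)\ge 0$ for general $P$.

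Now assume $\chi(\omega_X)=0$ and that ${\rm Pic}^0(X)$ has no supersingular factors. By the previous paragraph $h^{0}(X,\omega_X\otimes P)=0$ for general $P$, so $V^{0}(\omega_X)$ is a proper closed subvariety of ${\rm Pic}^0(X)$; at the same time $\chi(\mathcal O_X)=1-q+p_g=0$ together with $q\ge 2$ (maximal Albanese dimension) forces $p_g=q-1\ge 1$, so $\mathcal O_X\in V^{0}(\omega_X)$. Since $\chi(\omega_X\otimes P)=0$ for every $P$, the identity $h^{1}=h^{0}+h^{2}$, combined with $V^{2}(\omega_X)=\{\mathcal O_X\}$ (Serre duality) and $\mathcal O_X\in V^{0}(\omega_X)$, yields $V^{1}(\omega_X)=V^{0}(\omega_X)$; in particular $V^{1}(\omega_X)$ is proper. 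The argument then divides according to whether ${\rm dim}(V^{1}(\omega_X))\ge 1$.

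Suppose first ${\rm dim}(V^{1}(\omega_X))\ge 1$, and fix a component $W\subseteq V^{1}(\omega_X)=V^{0}(\omega_X)$ with ${\rm dim}(W)={\rm dim}(V^{1}(\omega_X))$. The plan is to produce an irrational pencil $g:X\to C$ to which $W$ is attached: choosing $B\subseteq A$ so that ${\rm Pic}^{0}(A/B)\subseteq{\rm Pic}^0(X)$ is the subtorus of which a suitable translate of $W$ is a translate (so ${\rm dim}(A/B)={\rm dim}(W)$), one forms the Stein factorization $g:X\to C\to A/B$ of $X\to A\to A/B$ and shows that $C$ is a curve. This last point is the crux, and it is where the Fourier--Mukai content of Theorem~\ref{i1} and the equivalence between generic vanishing and Kodaira-type vanishing of Corollary~\ref{5} are used: after pulling back along the isogeny $\phi_{L^{\otimes m}}$ attached to an ample line bundle $L$ on $A/B$, the nonvanishing of $H^{1}(X,\omega_X\otimes P)$ along $W$ becomes a Kodaira-type nonvanishing on a fiber product $\hat X_{m}$ as in Corollary~\ref{5}, from which a Castelnuovo--de~Franchis / Stein-factorization argument produces the fibration, the hypothesis that ${\rm Pic}^0(X)$ has no supersingular factors being what keeps these isogenies and the associated torsion points under control. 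Granting this, $C\to A/B$ is finite onto its image, so $g(C)\ge{\rm dim}(A/B)={\rm dim}(W)={\rm dim}(V^{1}(\omega_X))\ge 1$, and $g$ is the desired irrational pencil. This fibration step is the \emph{main obstacle}: in characteristic zero one invokes the theorems of Green--Lazarsfeld and Simpson, which realize the components of $V^{i}(\omega_X)$ as torsion translates of abelian subvarieties, together with Beauville's dictionary between positive-dimensional such components and irrational pencils, and these rest on Hodge theory; replacing them is exactly what the $W_2(k)$-lifting, through Theorem~\ref{i1} and Corollary~\ref{5}, is meant to supply.

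If instead $V^{1}(\omega_X)=V^{0}(\omega_X)$ is finite, then the Fourier--Mukai transform of the $GV$-sheaf $a_{*}\omega_X$ on $A$ is a sheaf of finite length supported on $V^{0}(\omega_X)$, so $a_{*}\omega_X$ is an iterated extension of degree-zero line bundles on $A$, hence locally free of positive rank on $A$. Were $q\ge 3$, the image $a(X)$ would be a surface properly contained in $A$ and $a_{*}\omega_X$ would be a torsion $\mathcal O_A$-module, a contradiction; therefore $q=2$ and $a:X\to A$ is surjective and generically finite. Thus $X$ is a minimal surface of maximal Albanese dimension with $q=2$ and $\chi(\mathcal O_X)=0$; since $\chi(\mathcal O_X)=0$ excludes the ruled case and, together with maximal Albanese dimension, excludes $\kappa(X)\in\{1,2\}$, the Enriques--Kodaira classification in positive characteristic (Bombieri--Mumford) forces $\kappa(X)=0$ with $q=2$, i.e.\ $X$ is an abelian surface; alternatively the structure of $a_{*}\omega_X$ can be used to show directly that $a$ is \'etale, whence $X$ is \'etale over an abelian surface and hence abelian. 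The remaining verifications---the identity $V^{1}(\omega_X)=V^{0}(\omega_X)$, the rank-versus-torsion dichotomy for $a_{*}\omega_X$, and this classification input---are routine once the Fourier--Mukai formalism on abelian varieties is in place.
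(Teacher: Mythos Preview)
Your argument for $\chi(\omega_X)\ge 0$ is correct and essentially identical to the paper's (Proposition~\ref{f1}). The two cases in the $\chi=0$ analysis, however, each contain a genuine gap compared with the paper's proof.

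\textbf{The case ${\rm dim}\,V^1(\omega_X)\ge 1$.} You implicitly assume that the component $W$ is a translate of an abelian subvariety when you choose $B$ with ${\rm Pic}^0(A/B)$ equal to the subtorus underlying $W$. This is exactly the content of the Pink--Roessler theorem (Proposition~\ref{p19}), and it is \emph{the} place where the ``no supersingular factors'' hypothesis is used; it is not about controlling isogenies as you suggest. More importantly, your mechanism for showing that the Stein factorization $X\to C$ has $C$ a curve is unnecessarily roundabout and not actually carried out: you invoke Corollary~\ref{5} and a vague Castelnuovo--de~Franchis step. The paper's argument (Lemma~\ref{f3}) is far simpler and direct. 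Writing the component as $Q+T$ with $T$ an abelian subvariety of $\hat A$ and $Q$ torsion, one gets a morphism $g=c\circ a:X\to\hat T$. If ${\rm dim}\,g(X)=2$ then $g$ is generically finite, so Theorem~\ref{1} applies \emph{to $g$ itself}: $H^1(X,\omega_X\otimes Q\otimes g^*R)=0$ for general $R\in{\rm Pic}^0(\hat T)=T$, contradicting $Q+T\subseteq V^1(\omega_X)$. Hence ${\rm dim}\,g(X)=1$, and since $g(X)$ generates $\hat T$ its genus is $\ge{\rm dim}\,T$. No Fourier--Mukai or Corollary~\ref{5} is needed here.

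\textbf{The case ${\rm dim}\,V^1(\omega_X)=0$.} Your Fourier--Mukai argument that $a_*\omega_X$ is locally free, forcing ${\rm dim}\,A=2$ and $a$ surjective, matches the paper. But your endgame is flawed: the assertion that ``$\chi(\mathcal O_X)=0$ together with maximal Albanese dimension excludes $\kappa(X)\in\{1,2\}$'' is false in positive characteristic as stated (Example~\ref{f4} gives mAd surfaces with $\chi=0$ and $\kappa=1$), and you offer no substitute argument using the extra information ${\rm dim}\,V^1=0$. The paper instead exploits the filtration of $a_*\omega_X$ by degree-zero line bundles to get $c_1(a_*\omega_X)\equiv_{\rm num}0$, then applies Grothendieck--Riemann--Roch to $a$ to deduce $a_*K_X\equiv_{\rm num}0$; passing to the Stein factorization and using the negativity lemma with the minimality of $X$ forces $K_X=0$, hence $\kappa(X)=0$, and then the Enriques--Kodaira classification gives that $X$ is abelian. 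Your alternative suggestion that ``$a$ is \'etale'' can be read off directly from the structure of $a_*\omega_X$ also needs justification; it is not immediate.
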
 
The second application, which is our main result about classification of surfaces of general type, is as follows. 
\begin{thmx}[Theorem \ref{c4}]
Let $X$ be a smooth minimal projective surface of general type over an algebraically closed field $k$ of characteristic $\ge 11$ and $\chi(\mathcal{O}_X)=1$. Denote the Albanese morphism as $a: X\to A$. Assume that $X$ is of maximal Albanese dimension, lifts to $W_2(k)$, its Picard variety has no supersingular factors and $a$ is separable. If ${\rm dim}(A)=4$ then $X=C_1\times C_2$ where $C_1$ and $C_2$ are smooth curves and $g(C_1)=g(C_2)=2$.
\end{thmx}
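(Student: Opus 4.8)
The plan is to transport the characteristic-zero argument of Debarre and Beauville to this setting, using the generic vanishing of Theorem~\ref{i1} and the Deligne--Illusie degeneration in place of the analytic inputs. \emph{Step 1 (numerology).} Since $X$ lifts to $W_2(k)$ and $\dim X=2<p$, Hodge--de Rham degenerates, $\operatorname{Pic}^0(X)$ is reduced, and every global $1$-form on $X$ is $d$-closed. Together with $\chi(\mathcal{O}_X)=1$ and $q:=h^1(\mathcal{O}_X)=\dim A=4$ this gives $h^0(\Omega^1_X)=4$, $p_g=h^0(\omega_X)=h^2(\mathcal{O}_X)=4$, and an isomorphism $a^*\colon H^0(\Omega^1_A)\to H^0(\Omega^1_X)$ (bijective because $a$ is separable and $X$ has maximal Albanese dimension). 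As $a$ is generically finite onto the surface $Y=a(X)\subsetneq A$, the wedge map $\wedge^2 H^0(\Omega^1_X)\to H^0(\Omega^2_X)$ goes from a $6$-dimensional to a $4$-dimensional space, so its kernel has dimension $\ge 2$.

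\emph{Step 2 (an irrational pencil of genus $2$).} Any line in $\mathbb{P}(\ker)\subseteq\mathbb{P}(\wedge^2 H^0(\Omega^1_X))=\mathbb{P}^5$ meets the Pl\"ucker quadric $G(2,4)$, so the kernel contains a nonzero decomposable closed form $\omega_1\wedge\omega_2$ with $\omega_1,\omega_2$ linearly independent. I would then invoke a positive-characteristic Castelnuovo--de Franchis statement — legitimate here because $\omega_1,\omega_2$ are $d$-closed and pulled back from the abelian variety, so the rank-one distribution they span has algebraic leaves — or, equivalently, exploit that Theorem~\ref{i1} makes $a_*\omega_X$ a $GV$-sheaf with $\chi=1$ and forces it to acquire a summand pulled back from a proper quotient of $A$; either way one obtains a fibration $f\colon X\to C$ with $\omega_1,\omega_2\in f^*H^0(\Omega^1_C)$. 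Minimality of general type excludes ruled and (quasi-)elliptic fibrations, so the general fibre $F$ is a smooth curve of genus $g(F)\ge 2$ and $g(C)\ge 2$; separability of $a$ and $p\ge 11$ are used to ensure $f$ is separable and that the forms descend to $C$.

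\emph{Step 3 (equality, isotriviality, descent).} Relative minimality of $f$ and the semipositivity $\deg f_*\omega_{X/C}\ge 0$ give $\chi(\mathcal{O}_X)=\deg f_*\omega_{X/C}+(g(C)-1)(g(F)-1)\ge(g(C)-1)(g(F)-1)$, so $\chi(\mathcal{O}_X)=1$ with $g(C),g(F)\ge 2$ forces $g(C)=g(F)=2$ and $\deg f_*\omega_{X/C}=0$. I would then argue that the rank-$2$, degree-$0$, nef bundle $f_*\omega_{X/C}$ is numerically flat, hence $f$ has no variation and is isotrivial; an isotrivial genus-$2$ fibration over a genus-$2$ curve is, up to birational modification, a quotient $(C\times F)/G$ by a finite group, and $q(X)=g(C)+g(F)$ forces $G$ to act trivially on $H^0(\Omega^1_C)\oplus H^0(\Omega^1_F)$, whence $G=1$ since the automorphism group of a smooth genus-$2$ curve acts faithfully on its $1$-forms (in characteristic $\ne 2$ the hyperelliptic involution already acts by $-1$). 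Therefore $X$ is birational, and — both factors being of general type and $X$ minimal — isomorphic, to $C_1\times C_2$ with $g(C_1)=g(C_2)=2$.

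\emph{Main obstacle.} The crux is the isotriviality in Step 3: in positive characteristic $\deg f_*\omega_{X/C}=0$ does not by itself force a genus-$2$ fibration to be isotrivial, as Moret-Bailly-type examples of non-isotrivial families with vanishing Hodge degree show, so one must use the absence of supersingular factors in $\operatorname{Pic}^0(X)$, the $W_2(k)$-lifting, the bound $p\ge 11$, and the vanishing in Theorem~\ref{i1} to exclude the pathological families and to pin down the numerical flatness of $f_*\omega_{X/C}$. The Castelnuovo--de Franchis step in Step 2 is the secondary difficulty, for analogous reasons (inseparable maps, non-reduced $\operatorname{Pic}^0$), and is likewise where separability of $a$ and the $W_2(k)$-lifting are brought to bear.
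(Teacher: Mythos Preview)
Your outline diverges substantially from the paper's, and beyond the two obstacles you flag there is a third that you underrate. In Step~2 you assert that the general fibre $F$ of $f\colon X\to C$ is smooth; in positive characteristic this fails in general even for fibrations on surfaces of general type, and neither separability of $a$ nor $p\ge 11$ alone yields it (separability of the function field extension $k(X)/k(C)$ is automatic and says nothing about smoothness of the geometric generic fibre). Tate's genus-change theorem gives only $\tfrac{p-1}{2}\mid p_a(F)-g(\tilde F)$, so one needs an \emph{a priori} bound on $p_a(F)$ before concluding anything. Without smoothness of $F$ your Step~3 collapses twice over: the formula reads $\chi(\mathcal{O}_X)=\deg f_*\omega_{X/C}+(g(C)-1)(p_a(F)-1)$ with the arithmetic genus, and the semipositivity $\deg f_*\omega_{X/C}\ge 0$ is itself unavailable in characteristic~$p$ (Raynaud, Moret-Bailly). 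So the argument stalls before you even reach the isotriviality step, which --- as you correctly note --- is already blocked by Moret-Bailly-type families and is not rescued by any of the stated hypotheses in an evident way.

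The paper takes a different path that sidesteps all three issues. It first produces \emph{two} irrational pencils $h_i\colon X\to C_i$ through a case-by-case Fourier--Mukai analysis of $V^1(\omega_X)$ (Propositions~\ref{c1} and~\ref{c2}), never invoking Castelnuovo--de~Franchis. Separability of $a$ is used only once, to exhibit an injection $\omega_{C_1}\otimes h_{1,*}\omega_X\hookrightarrow h_{1,*}\omega_X^{\otimes 2}$ (Lemma~\ref{c5}); combined with $h^0(2K_X)=\chi+K_X^2\le 10$ from the Bogomolov--Miyaoka--Yau inequality under the $W_2$-lifting, and Riemann--Roch on $C_1$, this gives $p_a(F)\le 5$, whereupon Tate's theorem with $p\ge 11$ forces $F$ smooth. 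With smoothness in hand, Arakelov's inequality $K_X^2\ge 8(g(C_1)-1)(g(F)-1)$ together with $K_X^2\le 9$ pins down $g(C_1)=g(F)=2$. Finally, isotriviality is never argued: the presence of the \emph{second} pencil $h_2$ allows one to show directly that the induced map $X\to C_1\times C_2$ has degree~$1$, by checking that its restriction to each fibre of $h_1$ is a fixed power of Frobenius and then comparing degrees after an \'etale base change trivialising $h_1$. Thus the two steps you identify as hardest are handled by entirely different devices --- a numerical bound on $p_a(F)$ for the first, and the existence of a second fibration for the second.
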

We will explain the meaning of the condition ${\rm dim}(A)=4$ later (Remark \ref{c12}). Note that even though we established a good generic vanishing theorem the proof of Theorem \ref{i2}, inspired by \cite{Beauville82} and \cite{HP02}, requires many new ideas. A considerable difficulty is that there is no obvious irrational pencils on $X$ and a large part of Hodge theory as well as other characteristic $0$ techniques used in \cite{Beauville82} and \cite{HP02} to construct irrational pencils fail in positive characteristics. Therefore a detailed analysis of $V^1(\omega_X)$ and the Fourier-Mukai transforms of various sheaves is made through Propositions \ref{c1}-\ref{c2}. 
\subsection*{Acknowledgements} 
The author would like to thank his advisor Professor Christopher Hacon for suggesting this project and a lot of inspiring discussions, support and encouragement. He would also like to thank Tong Zhang for suggesting and proving the inequality \eqref{c11}. Finally he would like to thank the referee for many valuable suggestions.
\section{Conventions, notations and preliminaries}
We fix an algebraically closed field $k$ and assume that all the schemes we will discuss are over $k$. We make no restriction on the characteristic of $k$ unless otherwise stated. 
\subsection{Derived categories}
For any scheme $X$ of dimension $n$ we denote by $D(X)$ the derived category of $\mathcal{O}_X$-modules and denote by $D_{c}(X)$ (resp. $D_{qc}(X)$) the full subcategory of $D(X)$ consisting of complexes whose cohomologies are coherent (resp. quasi-coherent). We also denote the dualizing complex by $\omega_X^{\cdot}$ and define the dualizing functor $D_X$ by $D_X(F)=R\mathcal{H}om(F,\omega_X^{\cdot}[n])$, $\forall F\in D_{qc}(X)$. We will use projection formula and Grothendieck duality in the following forms.
\begin{theorem}[Projection formula]
Let $f:X\to Y$ be a morphism of quasi-compact separated schemes. Let $F\in D_{qc}(X)$ be a sheaf and $G\in D_{qc}(Y)$ be a locally free sheaf. Then there is an isomorphism
$$Rf_*(F)\otimes_{\mathcal{O}_Y}G\xrightarrow{\cong}Rf_*(F\otimes_{\mathcal{O}_X}f^*G).$$ 
\end{theorem}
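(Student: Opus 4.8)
The plan is to build the comparison morphism by a formal adjunction argument and then check it is an isomorphism by d\'evissage on $G$, localizing on the target and reducing to the structure sheaf. To construct the natural map, note that by the $(f^{*},Rf_{*})$-adjunction, giving a morphism $Rf_{*}(F)\otimes_{\mathcal{O}_Y}G\to Rf_{*}(F\otimes_{\mathcal{O}_X}f^{*}G)$ is the same as giving a morphism $f^{*}\bigl(Rf_{*}(F)\otimes_{\mathcal{O}_Y}G\bigr)\to F\otimes_{\mathcal{O}_X}f^{*}G$. Since $f^{*}$ is (strong) monoidal, the source is canonically identified with $f^{*}Rf_{*}(F)\otimes_{\mathcal{O}_X}f^{*}G$, and tensoring the counit $f^{*}Rf_{*}(F)\to F$ with $f^{*}G$ produces the desired arrow; its adjoint is the projection formula morphism. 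All the ingredients used here — the counit, the monoidality isomorphism of $f^{*}$, the adjunction bijection — are natural and additive, so this morphism is functorial in $F$ and in $G$ and, crucially, compatible with arbitrary direct sums in the variable $G$.

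Next I would verify it is an isomorphism. This is a question local on $Y$, so I may assume $Y$ is affine; then the locally free sheaf $G$ is a direct sum, possibly infinite, of copies of $\mathcal{O}_Y$. Both functors $G\mapsto Rf_{*}(F)\otimes_{\mathcal{O}_Y}G$ and $G\mapsto Rf_{*}(F\otimes_{\mathcal{O}_X}f^{*}G)$ preserve arbitrary direct sums: the tensor products do, $f^{*}$ does as a left adjoint, and $Rf_{*}$ does on $D_{qc}(Y)$ because $f$ is quasi-compact and separated, hence concentrated. Since the comparison morphism respects these coproduct decompositions, it suffices to treat $G=\mathcal{O}_Y$. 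In that case $f^{*}\mathcal{O}_Y=\mathcal{O}_X$, and unwinding the triangle identities shows the morphism is literally the identity of $Rf_{*}(F)$, which finishes the proof.

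The one step that is not pure formalism — and the place I expect to be the main obstacle — is the commutation of $Rf_{*}$ with infinite direct sums on the unbounded derived category $D_{qc}(Y)$; this is exactly where the standing hypotheses that $f$ be quasi-compact and separated are used (through the theory of concentrated morphisms, after B\"okstedt--Neeman and Lipman). If one only needs $G$ locally free of \emph{finite} rank, which is all the applications in this paper require, then this subtlety disappears entirely: one reduces directly, via a local trivialization, to $G$ a finite free sheaf and then to $G=\mathcal{O}_Y$, with no unboundedness issue. Either way no hypothesis on $F$ beyond $F\in D_{qc}(X)$ is needed, since $F$ is held fixed throughout; for bounded-below $F$ one could alternatively argue classically, deducing the statement from the underived identity $f_{*}(F)\otimes G\cong f_{*}(F\otimes f^{*}G)$ via the hypercohomology spectral sequence.
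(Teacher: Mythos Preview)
Your argument is correct and is the standard proof of the projection formula. Note, however, that the paper does not prove this statement at all: it is listed in the preliminaries as a known result, with no proof given. So there is nothing to compare against; your write-up simply supplies a proof where the paper cites the fact.
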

\begin{theorem}[Grothendieck duality]
Let $f:X\to Y$ be a proper morphism of quasi-projective varieties, then
$$Rf_*D_X(F)=D_Y(Rf_*(F)),\ \forall F\in D_{qc}(X).$$
\end{theorem}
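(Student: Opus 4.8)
The statement is Grothendieck duality in its ``duality-functor'' packaging, and I would obtain it by sheafifying the adjunction between $Rf_*$ and the twisted inverse image functor $f^!$. \textbf{First} reduce to a projective morphism: since $X$ is quasi-projective over $k$, fix a locally closed immersion $X\hookrightarrow\mathbb{P}^N_k$; the resulting map $X\to\mathbb{P}^N_k\times_k Y$ (graph of $f$ followed by the base change of the immersion) is again an immersion, and it is proper over $Y$ because $f$ is proper and $\mathbb{P}^N_k\times_k Y\to Y$ is separated, hence it is a closed immersion. So $f$ factors as $X\xrightarrow{\iota}P:=\mathbb{P}^N_Y\xrightarrow{\pi}Y$ with $\iota$ a closed immersion and $\pi$ the structural projection of a projective bundle.

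\textbf{Next} prove the duality isomorphism in the two building-block cases and compose. For the projection, put $\pi^!G:=L\pi^*G\otimes_{\mathcal{O}_P}\omega_{P/Y}[N]$ with $\omega_{P/Y}=\mathcal{O}_P(-N-1)$; relative Serre duality on projective space --- obtained by d\'evissage (projection formula and the way-out lemma) from the one computation $R\pi_*\bigl(\omega_{P/Y}[N]\bigr)\cong\mathcal{O}_Y$ (that is, $H^N(\mathbb{P}^N,\omega_{\mathbb{P}^N})=k$ and all other cohomology vanishes) --- furnishes a natural isomorphism $R\pi_*R\mathcal{H}om_P(F',\pi^!G)\cong R\mathcal{H}om_Y(R\pi_*F',G)$ for $F'\in D_{qc}(P)$, $G\in D^+_{qc}(Y)$. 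For the closed immersion, the sheaf $R\mathcal{H}om_P(\iota_*\mathcal{O}_X,H)$ is supported on $X$, so it is $\iota_*$ of an $\mathcal{O}_X$-complex which we call $\iota^!H$; the derived coinduction adjunction for a surjection of rings (locally, $R\mathrm{Hom}_A(M,H)\cong R\mathrm{Hom}_{A/I}(M,R\mathrm{Hom}_A(A/I,H))$ for $A/I$-modules $M$) then sheafifies to $\iota_*R\mathcal{H}om_X(F,\iota^!H)\cong R\mathcal{H}om_P(\iota_*F,H)$. Setting $f^!:=\iota^!\pi^!$, specializing $H=\pi^!G$ in the latter, applying $R\pi_*$, and inserting $F'=\iota_*F$ into the former (and using $Rf_*=R\pi_*\iota_*$) yields the duality isomorphism
$$Rf_*R\mathcal{H}om_X(F,f^!G)\cong R\mathcal{H}om_Y(Rf_*F,G),\qquad F\in D_{qc}(X),\ G\in D^+_{qc}(Y),$$
which I will call $(\star)$. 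Its independence of the chosen embedding (and of $N$) is checked by comparing factorizations through the diagonal-type maps among the $\mathbb{P}^N$'s, or more economically by observing that $(\star)$ exhibits $f^!$ as right adjoint to $Rf_*$ and so determines it uniquely.

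\textbf{Then} identify $f^!$ on the dualizing complex. Let $p_X: X\to\mathrm{Spec}\,k$ and $p_Y: Y\to\mathrm{Spec}\,k$ be the structure morphisms, so $p_X=p_Y\circ f$. The whole point of the normalization behind $D_X(-)=R\mathcal{H}om(-,\omega_X^{\cdot}[n_X])$ (with $n_X=\dim X$) is that $\omega_X^{\cdot}[n_X]\cong p_X^!k$ --- produced from $X\hookrightarrow\mathbb{P}^M_k\to\mathrm{Spec}\,k$ by the recipe above, and equal to $\omega_X[n_X]$ when $X$ is smooth --- and likewise $\omega_Y^{\cdot}[n_Y]\cong p_Y^!k$. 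The constructions in the two base cases are compatible with composition, giving pseudofunctoriality $(p_Y\circ f)^!\cong f^!\circ p_Y^!$; hence $f^!\bigl(\omega_Y^{\cdot}[n_Y]\bigr)\cong f^!p_Y^!k\cong p_X^!k\cong\omega_X^{\cdot}[n_X]$. \textbf{Finally}, apply $(\star)$ with $G=\omega_Y^{\cdot}[n_Y]\in D^b_c(Y)$: the left side becomes $Rf_*R\mathcal{H}om_X(F,\omega_X^{\cdot}[n_X])=Rf_*D_X(F)$ and the right side becomes $R\mathcal{H}om_Y(Rf_*F,\omega_Y^{\cdot}[n_Y])=D_Y(Rf_*F)$, which is the claim.

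\textbf{The hard part} is all of the second step together with the identification $f^!\omega_Y^{\cdot}\cong\omega_X^{\cdot}$: constructing $f^!$, proving the two base-case duality isomorphisms (relative Serre duality on $\mathbb{P}^N$ and local duality for a closed immersion), and verifying well-definedness and pseudofunctoriality --- in effect, the entire core of Grothendieck's duality formalism. In addition, because the statement quantifies over \emph{all} $F\in D_{qc}(X)$, the d\'evissage arguments must be carried out in the unbounded derived category of quasi-coherent sheaves, which requires $K$-injective and $K$-flat resolutions and Brown representability in the style of Spaltenstein and Neeman; for every application of the theorem in this paper, however, the case $F\in D^b_c(X)$ --- where these technicalities disappear --- is all that is used.
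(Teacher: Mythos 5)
The paper offers no proof of this statement at all: it is quoted in the preliminaries as a standard form of Grothendieck duality (as in Hartshorne's \emph{Residues and Duality}, or Conrad and Lipman--Neeman), so there is no internal argument to compare yours against. Your sketch is a correct outline of exactly the classical route those references take: factor the proper (hence projective) morphism as a closed immersion into $\mathbb{P}^N_Y$ followed by the projection, prove the sheafified duality isomorphism for the projection via relative Serre duality built from $R\pi_*(\omega_{P/Y}[N])\cong\mathcal{O}_Y$, prove it for the closed immersion via the coinduction adjunction, compose to get $(\star)$, and then specialize $G=\omega_Y^{\cdot}[n_Y]$ using $f^!p_Y^!k\cong p_X^!k$ and the normalization $\omega_X^{\cdot}[n_X]\cong p_X^!k$ (consistent with the paper's convention, since for smooth $X$ this is $\omega_X[n_X]$). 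The reduction to a closed immersion is argued correctly (properness of $X\to\mathbb{P}^N_Y$ by cancellation, and a proper immersion is closed), and you rightly flag that for the unbounded quantifier over $F\in D_{qc}(X)$ one needs Spaltenstein/Neeman technology, while every use in the paper (e.g.\ Lemma 4.9 of Section 3 and the arguments in Propositions 5.5, 5.6, 5.9) only involves bounded complexes with coherent cohomology. The one honest caveat is that what you label ``the hard part'' --- pseudofunctoriality, independence of the embedding, and the base-case duality isomorphisms --- is the bulk of the theorem and is deferred to the standard references rather than proved; as a self-contained proof it is therefore only a skeleton, but as a reconstruction of the intended justification of this background theorem it is the right one.
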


We will also need Grauert-Riemenschneider vanishing theorem which is known to hold for smooth surfaces in any characteristic. We generalize the original statement in the following way:
\begin{theorem}[Grauert-Riemenschneider vanishing theorem]\label{p1}
Let $f:X\to Y$ be a projective and generically finite morphism, $Y$ normal and quasi-projective and $X$ a smooth surface. Then $R^1f_*(\omega_X\otimes P)=0$ for any $P\in {\rm Pic}^0(X)$.
\end{theorem}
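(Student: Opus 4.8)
The plan is to show that the classical proof of the Grauert--Riemenschneider vanishing theorem for surfaces survives the twist by a numerically trivial line bundle essentially unchanged: the twist contributes nothing to the intersection numbers that the argument is built on. First I would record the structure. Since $f$ is generically finite every fibre has dimension $\le 1$, so $R^2f_*=0$ and $\mathcal F:=R^1f_*(\omega_X\otimes P)$ is coherent, supported on the finite set $S=\{y\in Y:\dim f^{-1}(y)=1\}$, and of finite length; hence it suffices to prove $\mathcal F_y=0$ for each $y\in S$. Fix such a $y$, put $E=f^{-1}(y)_{\mathrm{red}}=\bigcup_iE_i$, and fix an effective Cartier divisor $D$ on $X$ with $\operatorname{Supp}D=E$. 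Since all $E_i$ are contracted by $f$, the intersection form on $\bigoplus_i\mathbb Z E_i$ is negative definite (it is the Gram matrix of classes orthogonal to a nef and big class, e.g.\ the pull-back of an ample class on $Y$, so this is the Hodge index theorem); in particular $G^2<0$ for every nonzero effective $G$ supported on $E$. Because $\mathcal F_y$ has finite length it equals its completion, so by the theorem on formal functions $\mathcal F_y\cong\varprojlim_n H^1\!\bigl(nD,(\omega_X\otimes P)|_{nD}\bigr)$, the divisors $nD$ being cofinal among the infinitesimal neighbourhoods of $E$. Thus it is enough to prove $H^1\!\bigl(nD,(\omega_X\otimes P)|_{nD}\bigr)=0$ for every $n$.

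Next I would dualize on the curve $nD$. Being an effective Cartier divisor in the smooth surface $X$, $nD$ is a one-dimensional Gorenstein scheme, proper over $k$, and adjunction gives $\omega_X|_{nD}\cong\omega_{nD}\otimes\mathcal O_X(-nD)|_{nD}$. Hence $(\omega_X\otimes P)|_{nD}\cong\omega_{nD}\otimes\bigl(\mathcal O_X(-nD)\otimes P\bigr)|_{nD}$, and Serre duality on $nD$ gives
\[
H^1\!\bigl(nD,(\omega_X\otimes P)|_{nD}\bigr)^{\vee}\;\cong\;H^0\!\bigl(nD,\mathcal M|_{nD}\bigr),\qquad \mathcal M:=\mathcal O_X(nD)\otimes P^{\vee}.
\]
This is the only place where $P\in{\rm Pic}^0(X)$ is used: as $P$ is numerically trivial, $\mathcal M\cdot E_i=nD\cdot E_i$ for all $i$, so from here on the twist by $P^\vee$ is invisible to the argument.

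Finally I would kill $H^0(nD,\mathcal M|_{nD})$ by peeling off the components of $nD$. Build a chain $nD=H_0>H_1>\cdots>H_N=0$ of effective divisors supported on $E$ with $H_k=H_{k-1}-E_{j_k}$, where at each step $E_{j_k}$ is chosen to be a component of $H_{k-1}$ with $H_{k-1}\cdot E_{j_k}<0$; such a component exists because $0>H_{k-1}^{2}=\sum_i(\operatorname{mult}_{E_i}H_{k-1})\,(H_{k-1}\cdot E_i)$ forces some summand to be negative. Writing $F_k=nD-H_k$ for the already-removed part, the standard exact sequences
\[
0\to\bigl(\mathcal M\otimes\mathcal O_X(-F_{k-1})\bigr)|_{E_{j_k}}\to\mathcal M|_{F_k}\to\mathcal M|_{F_{k-1}}\to 0
\]
give, on global sections, $0\to H^0\!\bigl(E_{j_k},(\mathcal M\otimes\mathcal O_X(-F_{k-1}))|_{E_{j_k}}\bigr)\to H^0(F_k,\mathcal M|_{F_k})\to H^0(F_{k-1},\mathcal M|_{F_{k-1}})$; the left-hand term vanishes since $\deg\bigl((\mathcal M\otimes\mathcal O_X(-F_{k-1}))|_{E_{j_k}}\bigr)=\mathcal M\cdot E_{j_k}-F_{k-1}\cdot E_{j_k}=H_{k-1}\cdot E_{j_k}<0$ and $E_{j_k}$ is an integral projective curve. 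Thus $H^0(F_k,\mathcal M|_{F_k})$ injects into $H^0(F_{k-1},\mathcal M|_{F_{k-1}})$, and since $F_0=0$, induction gives $H^0(nD,\mathcal M|_{nD})=H^0(F_N,\mathcal M|_{F_N})=0$, which is what was needed.

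The substantive content is the last paragraph. In characteristic $0$ it could be bypassed by invoking Kawamata--Viehweg vanishing for the nef and big class $\mathcal M$, but that fails in characteristic $p$ and one must run the component-peeling argument, which is exactly the classical proof of Grauert--Riemenschneider in dimension two. Relative to that, the present statement asks only for two harmless enlargements: that $f$ be merely generically finite (still fine, since the exceptional configuration is negative definite by the Hodge index theorem) and that one may twist by $P$ (still fine, since a numerically trivial twist changes no degree along an $E_i$); so no genuinely new difficulty arises, and the main thing to verify is that the classical input — negative-definiteness of the exceptional lattice together with adjunction and duality on the Gorenstein thickenings of the fibre — is available in exactly the generality claimed.
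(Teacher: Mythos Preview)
Your proposal is correct and is exactly the argument the paper has in mind: the paper's proof is a one-line citation of Koll\'ar's Theorem 2.20.1 together with the remark that ``the proof also works for $\omega_X\otimes P$'', and what you have written is precisely that classical proof (formal functions, adjunction/duality on the Gorenstein thickening $nD$, and the component-peeling argument driven by negative definiteness of the exceptional lattice), with the observation that the numerically trivial twist $P$ disappears from every degree computation. There is nothing to add; your write-up simply unpacks the citation.
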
  
\begin{proof}
The original statement and proof can be found in \cite{Kollar07} as Theorem 2.20.1 which says that $R^1f_*\omega_X=0$. The proof also works for $\omega_X\otimes P$.
\end{proof}
\subsection{Abelian varieties, Fourier-Mukai transform and generic vanishing theorems}
\begin{definition}
Let $A$ be an abelian variety. For a subvariety $X\subseteq A$ we say that $X$ \emph{generates} $A$ if $X$ is not contained in any proper abelian subvariety of $A$. 
\end{definition}
\begin{definition}
Let $X$ be a projective variety and let $a: X\to A$ be the Albanese morphism of $X$. We say that $X$ is \emph{of maximal Albanese dimension (mAd)} if ${\rm dim}(X)={\rm dim}(a(X))$. 
\end{definition}
Let $A$ be an abelian variety. Let $\hat{A}$ be its dual abelian variety and $p_A: A\times \hat{A}\to A$ and $p_{\hat{A}}:A\times \hat{A}\to \hat{A}$ be the projection morphisms. Let $\mathcal{P}$ be the Poincar\'{e} line bundle on $A\times \hat{A}$. We define the Fourier-Mukai transform $R\hat{S}:D(A)\to D(\hat{A})$ and $RS:D(\hat{A})\to D(A)$ with respect to the kernel $\mathcal{P}$ by
$$R\hat{S}(\cdot)=Rp_{\hat{A},*}(p_A^*(\cdot)\otimes\mathcal{P}),\ \ \ RS(\cdot)=Rp_{A,*}(p_{\hat{A}}^*(\cdot)\otimes\mathcal{P}).$$
Next we recall some facts proven in \cite{Mukai81}.
\begin{theorem}\label{p10}\cite[Theorem 2.2]{Mukai81}
The following isomorphisms of functors hold on $D_{qc}(A)$ and $D_{qc}(\hat{A})$:
$$RS\circ R\hat{S}=(-1_A)^*[-g]$$
$$R\hat{S}\circ RS=(-1_{\hat{A}})^*[-g]$$
where $[-g]$ means shifting by $g$ steps to the right and $-1_A$ means the inverse morphism on $A$. 
\end{theorem}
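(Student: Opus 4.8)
The plan is to realise both compositions as Fourier--Mukai transforms with explicit kernels on $A\times A$ (resp.\ $\hat A\times\hat A$) and then to identify those kernels. For the first isomorphism, the standard formula for composing integral functors shows that $RS\circ R\hat S$ is the transform of $D_{qc}(A)$ with kernel
$$\mathcal Q:=R\pi_{13,*}\bigl(\pi_{12}^*\mathcal P\otimes\pi_{23}^*\mathcal P\bigr)\in D_{qc}(A\times A),$$
where $\pi_{12},\pi_{23},\pi_{13}$ are the projections of $A\times\hat A\times A$ and we use that the kernel of $RS$, viewed on $\hat A\times A$, is the Poincar\'e bundle under the canonical identification $\widehat{\hat A}=A$. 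So everything reduces to computing $\mathcal Q$, and the goal is to prove $\mathcal Q\cong\mathcal O_{\Gamma}[-g]$ where $\Gamma\subset A\times A$ is the antidiagonal, i.e.\ the graph of $-1_A$, and $g=\dim A$; granting this, the integral transform with kernel $\mathcal O_{\Gamma}$ is readily identified with $(-1_A)_*=(-1_A)^*$ and the shift $[-g]$ is carried along, giving $RS\circ R\hat S=(-1_A)^*[-g]$.

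To compute $\mathcal Q$, I would first rewrite the integrand using the biextension (theorem of the square) property of $\mathcal P$: the line bundle $\pi_{12}^*\mathcal P\otimes\pi_{23}^*\mathcal P$ on $A\times\hat A\times A$ restricts on $\{a\}\times\hat A\times\{a'\}$ to the point of ${\rm Pic}^0(\hat A)$ attached to $a+a'$ and is trivial along $A\times\{0\}\times A$, so by the seesaw principle it equals $\nu^*\mathcal P$ with $\nu\colon A\times\hat A\times A\to A\times\hat A$, $(a,\xi,a')\mapsto(a+a',\xi)$. Now $\nu$ and $\pi_{13}$ fit into a cartesian square over the group law $s\colon A\times A\to A$ and the first projection $q_A\colon A\times\hat A\to A$; all four morphisms are flat, so flat base change gives
$$\mathcal Q\cong s^*\,Rq_{A,*}\mathcal P=s^*\,RS(\mathcal O_{\hat A}).$$
The crux is then the classical lemma that $RS(\mathcal O_{\hat A})=Rq_{A,*}\mathcal P\cong k(0_A)[-g]$; combining it with the facts that $s$ is flat and $s^{-1}(0_A)$ is the reduced antidiagonal gives $s^*k(0_A)=\mathcal O_{\Gamma}$, hence $\mathcal Q\cong\mathcal O_{\Gamma}[-g]$ as wanted. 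The second isomorphism $R\hat S\circ RS=(-1_{\hat A})^*[-g]$ then follows verbatim with the roles of $A$ and $\hat A$ exchanged, using biduality and the induced identification of Poincar\'e bundles.

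This reduces everything to the key lemma $RS(\mathcal O_{\hat A})\cong k(0_A)[-g]$, which is where I expect the real work to be. Its easy half is that every cohomology sheaf of $Rq_{A,*}\mathcal P$ is supported at $0_A$: by cohomology and base change this follows from $H^i(\hat A,P)=0$ for all $i$ whenever $P\in{\rm Pic}^0(\hat A)$ is nontrivial, itself a consequence of the theorem of the square and the K\"unneth formula. The genuinely delicate part is concentration in degree exactly $g$ with one-dimensional stalk there; the hard part will be to pin this down, e.g.\ by noting that $Rq_{A,*}\mathcal P$ is a perfect complex of amplitude $[0,g]$ supported at a point and comparing it with $Rq_{A,*}\mathcal P\otimes^{L}k(0_A)=R\Gamma(\hat A,\mathcal O_{\hat A})$, whose cohomology is the exterior algebra on $H^1(\hat A,\mathcal O_{\hat A})$ and which therefore forces the complex to be $k(0_A)[-g]$ (Mumford's computation; cf.\ \cite{Mukai81}). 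A secondary point of care is that the composition-of-kernels formula and flat base change must be applied in the quasi-coherent derived category, as the statement requires, and that the bookkeeping of which copy of $\mathcal P$ sits on $A\times\hat A$ versus $\hat A\times A$ must be kept consistent throughout.
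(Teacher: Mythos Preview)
The paper does not give its own proof of this theorem: it is quoted verbatim as \cite[Theorem 2.2]{Mukai81} in the preliminaries and used as a black box, so there is nothing to compare against. Your sketch is essentially Mukai's original argument (kernel convolution, seesaw identification $\pi_{12}^*\mathcal P\otimes\pi_{23}^*\mathcal P\cong\nu^*\mathcal P$, flat base change, and the key input $Rp_{A,*}\mathcal P\cong k(0_A)[-g]$), and it is correct as outlined.
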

\begin{lemma}\cite[(3.1)]{Mukai81}\label{p4}
For any $x\in A$ and $y\in \hat{A}$ the following isomorphisms hold on $D_{qc}(A)$ and $D_{qc}(B)$ respectively:
$$RS\circ T_y^*\cong (\otimes P_{-y})\circ RS$$
$$RS\circ(\otimes P_x)\cong T_x^*\circ RS,$$
where $P_x=\mathcal{P}|_{\{x\}\times\hat{A}}$, $P_y=\mathcal{P}|_{A\times \{y\}}$ and $T_x$ and $T_y$ are translations by $x$ and $y$ on $A$ and $\hat{A}$ respectively.
\end{lemma}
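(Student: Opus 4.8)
The plan is to reduce both isomorphisms to two elementary ``translation formulas'' for the Poincar\'e bundle $\mathcal{P}$ on $A\times\hat A$ and then substitute these into the definition of $RS$. The two formulas are
\[
({\rm id}_A\times T_y)^*\mathcal{P}\cong\mathcal{P}\otimes p_A^*P_y,\qquad (T_x\times {\rm id}_{\hat A})^*\mathcal{P}\cong\mathcal{P}\otimes p_{\hat A}^*P_x,
\]
and I would prove each by the seesaw principle. For the first: restricting both sides to a slice $A\times\{z\}$ yields $P_{z+y}$ on the left and $P_z\otimes P_y$ on the right, and these agree because $z\mapsto P_z$ is the tautological homomorphism $\hat A\to{\rm Pic}^0(A)$; hence the two line bundles differ by the pullback of a line bundle $M$ on $\hat A$, and restricting to $\{0\}\times\hat A$, where $\mathcal{P}$ (and hence each side) is trivial by the normalization of $\mathcal{P}$, forces $M\cong\mathcal{O}_{\hat A}$. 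The second formula is proved the same way, now using that line bundles in ${\rm Pic}^0(A)$ are invariant under translation by elements of $A$. Nothing here depends on the characteristic.

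Next I would substitute. For $F\in D_{qc}(\hat A)$ put $\sigma={\rm id}_A\times T_y$, an automorphism of $A\times\hat A$ satisfying $p_{\hat A}\circ\sigma=T_y\circ p_{\hat A}$ and $p_A\circ\sigma=p_A$. Rewriting the first translation formula as $\mathcal{P}\cong\sigma^*\mathcal{P}\otimes p_A^*P_{-y}$ and unwinding the definition,
\[
RS(T_y^*F)=Rp_{A,*}\bigl(\sigma^*p_{\hat A}^*F\otimes\mathcal{P}\bigr)\cong Rp_{A,*}\bigl(\sigma^*(p_{\hat A}^*F\otimes\mathcal{P})\otimes p_A^*P_{-y}\bigr).
\]
Since $\sigma$ is an automorphism with $p_A\circ\sigma=p_A$ one has $Rp_{A,*}\circ\sigma^*\cong Rp_{A,*}$, and the projection formula pulls the factor $p_A^*P_{-y}$ outside, giving $RS(T_y^*F)\cong RS(F)\otimes P_{-y}$; this is the first claimed isomorphism. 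The argument for the second isomorphism is parallel with $\tau=T_x\times{\rm id}_{\hat A}$: here $p_{\hat A}\circ\tau=p_{\hat A}$, so $p_{\hat A}^*F$ is $\tau$-invariant, the second translation formula absorbs the twist by $P_x$ into $\tau^*\mathcal{P}$, and $p_A\circ\tau=T_x\circ p_A$ converts $Rp_{A,*}\circ\tau^*$ into $T_x^*\circ Rp_{A,*}$, yielding $RS(F\otimes P_x)\cong T_x^*RS(F)$.

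The only real difficulty is sign bookkeeping — keeping straight when a translation contributes a twist by $P_y$ rather than by $P_{-y}=P_y^{-1}$ — which is controlled by tracking the seesaw normalization and the inversion law on ${\rm Pic}^0$. Beyond that one must make sure that flat base change along the translation automorphisms and the projection formula are being used in $D_{qc}$, where they are standard, and that all the intermediate isomorphisms are canonical, so that one obtains isomorphisms of \emph{functors} rather than merely object-by-object isomorphisms.
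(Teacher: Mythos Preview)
Your argument is correct and is essentially the standard proof: the two seesaw identities for $\mathcal{P}$ that you isolate are precisely the content of Mukai's (3.1), and the substitution via the automorphisms $\sigma$ and $\tau$ together with projection formula and $Rp_{A,*}\circ\sigma^*\cong Rp_{A,*}$, $Rp_{A,*}\circ\tau^*\cong T_x^*\circ Rp_{A,*}$ is exactly how one passes from these identities to the functorial isomorphisms. The paper itself gives no proof of this lemma---it simply cites \cite[(3.1)]{Mukai81}---so there is nothing to compare against beyond noting that your write-up reproduces Mukai's original argument.
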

\begin{lemma}\label{p11}\cite[(3.4)]{Mukai81}
Let $A$ and $B$ be abelian varieties, $\varphi:A\to B$ an isogeny and $\hat{\varphi}:\hat{B}\to\hat{A}$ the dual isogeny of $\varphi$. Then the following isomorphisms hold on $D_{qc}(B)$ and $D_{qc}(A)$ respectively:
$$\varphi^*\circ RS_B=RS_A\circ\hat{\varphi}_*$$
$$\varphi_*\circ RS_A=RS_B\circ\hat{\varphi}^*.$$
\end{lemma}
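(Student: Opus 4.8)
The plan is to follow the approach of \cite{Mukai81}: both isomorphisms are formal consequences of a single geometric fact about the Poincar\'e bundle, combined with flat base change and the projection formula. I would work with the ``correspondence'' $A\times\hat{B}$, equipped with its two projections
$$\pi=\varphi\times 1_{\hat{B}}:A\times\hat{B}\to B\times\hat{B},\qquad \eta=1_A\times\hat{\varphi}:A\times\hat{B}\to A\times\hat{A},$$
and first establish the canonical isomorphism $\pi^{*}\mathcal{P}_B\cong\eta^{*}\mathcal{P}_A$ of line bundles on $A\times\hat{B}$, where $\mathcal{P}_A$ and $\mathcal{P}_B$ are the Poincar\'e bundles on $A\times\hat{A}$ and $B\times\hat{B}$. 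This is essentially the defining property of the dual isogeny: restricting either side to a slice $A\times\{y\}$ with $y\in\hat{B}$ yields $\varphi^{*}$ of the degree-zero line bundle on $B$ represented by $y$, which is precisely the line bundle on $A$ represented by $\hat{\varphi}(y)$; since moreover both sides restrict trivially to $\{0_A\}\times\hat{B}$, the seesaw theorem identifies them.

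Granting this, I would prove $\varphi^{*}\circ RS_B=RS_A\circ\hat{\varphi}_{*}$ by chasing an object $F\in D_{qc}(\hat{B})$ through $A\times\hat{B}$. Since $\varphi$ is an isogeny, hence finite and flat, the square
$$\begin{CD}
A\times\hat{B} @>{\pi}>> B\times\hat{B}\\
@V{p_A}VV @VV{p_B}V\\
A @>{\varphi}>> B
\end{CD}$$
is Cartesian with all four maps flat, so flat base change gives $\varphi^{*}\circ Rp_{B,*}\cong Rp_{A,*}\circ\pi^{*}$; applying this to $p_{\hat{B}}^{*}F\otimes\mathcal{P}_B$ and using $p_{\hat{B}}\circ\pi=p_{\hat{B}}$ together with $\pi^{*}\mathcal{P}_B\cong\eta^{*}\mathcal{P}_A$, one obtains $\varphi^{*}RS_B(F)\cong Rp_{A,*}(p_{\hat{B}}^{*}F\otimes\eta^{*}\mathcal{P}_A)$. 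For the other side, $\hat{\varphi}$ is also finite and flat, so
$$\begin{CD}
A\times\hat{B} @>{\eta}>> A\times\hat{A}\\
@V{p_{\hat{B}}}VV @VV{p_{\hat{A}}}V\\
\hat{B} @>{\hat{\varphi}}>> \hat{A}
\end{CD}$$
is Cartesian; base change gives $p_{\hat{A}}^{*}\hat{\varphi}_{*}F\cong\eta_{*}p_{\hat{B}}^{*}F$, and feeding this into $RS_A(\hat{\varphi}_{*}F)=Rp_{A,*}(p_{\hat{A}}^{*}\hat{\varphi}_{*}F\otimes\mathcal{P}_A)$, then applying the projection formula for $\eta$ (with $\mathcal{P}_A$ locally free) and $p_A\circ\eta=p_A$, produces the same object $Rp_{A,*}(p_{\hat{B}}^{*}F\otimes\eta^{*}\mathcal{P}_A)$. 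Hence the two functors agree.

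For the second identity $\varphi_{*}\circ RS_A=RS_B\circ\hat{\varphi}^{*}$ I would argue either by the symmetric computation, pushing forward along $\varphi$ and pulling back along $\hat{\varphi}$ through the same correspondence, or formally from the first one: applying the first identity to the isogeny $\hat{\varphi}:\hat{B}\to\hat{A}$, whose dual is $\varphi$ under the biduality $\hat{\hat{A}}\cong A$ (so that $RS_{\hat{A}},RS_{\hat{B}}$ become $R\hat{S}_A,R\hat{S}_B$), gives $\hat{\varphi}^{*}\circ R\hat{S}_A=R\hat{S}_B\circ\varphi_{*}$; composing both sides with the relevant Fourier--Mukai functors and invoking the inversion isomorphisms of Theorem \ref{p10}, the leftover $(-1)^{*}$ and shift twists cancel and the claim drops out.

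The only genuinely non-formal ingredient is the Poincar\'e bundle identity $\pi^{*}\mathcal{P}_B\cong\eta^{*}\mathcal{P}_A$, and even that is standard; the main point requiring some care is that base change and the projection formula must be used in their derived-category versions on $D_{qc}$, which is legitimate here precisely because an isogeny is finite and flat (and, in the route to the second identity via the first, one must keep track of the $(-1)^{*}$ twists supplied by Theorem \ref{p10}).
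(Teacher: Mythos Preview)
The paper does not supply a proof of this lemma at all: it is stated with a bare citation to \cite[(3.4)]{Mukai81}. Your argument is precisely Mukai's original one --- the key Poincar\'e bundle identity $(\varphi\times 1_{\hat{B}})^*\mathcal{P}_B\cong(1_A\times\hat{\varphi})^*\mathcal{P}_A$ on $A\times\hat{B}$, followed by flat base change and the projection formula --- so there is nothing to compare, and your proposal is correct.
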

\begin{proposition}\label{p13}\cite[Proposition 3.11 (1)]{Mukai81}
Let $A$ be an abelian variety, $L$ an ample line bundle on $A$ and $\phi_L: A\to \hat{A}$ the isogeny induced by $L$. Then 
$$\phi_L^*(\widehat{L})=\bigoplus^{h^0(A,L)}L^{\vee}.$$ 
\end{proposition}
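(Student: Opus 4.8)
The plan is to reduce the statement to flat base change on $A\times A$ together with the standard description of the pullback of the Poincar\'e bundle along $\phi_L$. Since $L$ is ample, $H^i(A,L\otimes P)=0$ for all $i>0$ and all $P\in{\rm Pic}^0(A)$, so $\widehat{L}=R\hat{S}(L)=Rp_{\hat{A},*}(p_A^*L\otimes\mathcal{P})$ is a vector bundle on $\hat{A}$ concentrated in degree $0$ with ${\rm rk}\,\widehat{L}=h^0(A,L)$; in particular $\phi_L^*\widehat{L}$ is a vector bundle on $A$ of the correct rank and it remains only to identify it. First I would pull back the projection $p_{\hat{A}}:A\times\hat{A}\to\hat{A}$ along the isogeny $\phi_L:A\to\hat{A}$: the resulting fiber product is $A\times A$, with the two maps being $p_2:A\times A\to A$ and ${\rm id}_A\times\phi_L:A\times A\to A\times\hat{A}$. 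Since $\phi_L$ is an isogeny, hence finite and flat, flat base change gives
$$\phi_L^*\widehat{L}\;\cong\;Rp_{2,*}\bigl(({\rm id}_A\times\phi_L)^*(p_A^*L\otimes\mathcal{P})\bigr)\;\cong\;Rp_{2,*}\bigl(p_1^*L\otimes({\rm id}_A\times\phi_L)^*\mathcal{P}\bigr),$$
where $p_1:A\times A\to A$ is the first projection.

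Next I would invoke the defining property of $\phi_L$, namely $\phi_L(x)=T_x^*L\otimes L^{\vee}$, together with the universal property of $\mathcal{P}$ (see-saw), to obtain the well-known identity
$$({\rm id}_A\times\phi_L)^*\mathcal{P}\;\cong\;m^*L\otimes p_1^*L^{\vee}\otimes p_2^*L^{\vee},$$
where $m:A\times A\to A$ denotes the group law. Substituting this in and applying the projection formula reduces the right-hand side to $Rp_{2,*}(m^*L)\otimes L^{\vee}$. The automorphism $\sigma$ of $A\times A$ given by $(a,a')\mapsto(a+a',a')$ satisfies $p_1\circ\sigma=m$ and $p_2\circ\sigma=p_2$, so $Rp_{2,*}(m^*L)\cong Rp_{2,*}(\sigma^*p_1^*L)\cong Rp_{2,*}(p_1^*L)$; and flat base change for the trivial family $p_2$ (over ${\rm Spec}\,k$), together with $H^{>0}(A,L)=0$, gives $Rp_{2,*}(p_1^*L)\cong H^0(A,L)\otimes_k\mathcal{O}_A$. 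Combining these isomorphisms yields $\phi_L^*\widehat{L}\cong H^0(A,L)\otimes_k L^{\vee}=\bigoplus^{h^0(A,L)}L^{\vee}$, as desired.

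The only substantive input is the identity $({\rm id}_A\times\phi_L)^*\mathcal{P}\cong m^*L\otimes p_1^*L^{\vee}\otimes p_2^*L^{\vee}$; everything else is formal manipulation with flat base change and the projection formula, which is legitimate because $\phi_L$ is an isogeny (finite and flat) and $p_2$ is flat and proper. I would be careful to track the line bundles explicitly through the coordinate change $\sigma$ rather than argue by ``translation invariance'', so that no spurious twist by an element of ${\rm Pic}^0(A)$ is introduced. (Alternatively the result can be deduced from Lemma \ref{p11} using that $\phi_L$ is a symmetric isogeny, but the computation above is shorter and self-contained.)
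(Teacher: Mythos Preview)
Your proof is correct. Note that the paper does not actually supply a proof of this proposition---it is stated purely as a citation of \cite[Proposition 3.11(1)]{Mukai81}. Your argument is essentially Mukai's original one: flat base change along the isogeny $\phi_L$, the see-saw identity $({\rm id}_A\times\phi_L)^*\mathcal{P}\cong m^*L\otimes p_1^*L^{\vee}\otimes p_2^*L^{\vee}$, and the shear automorphism $(a,a')\mapsto(a+a',a')$ reducing $Rp_{2,*}(m^*L)$ to $Rp_{2,*}(p_1^*L)\cong H^0(A,L)\otimes\mathcal{O}_A$.
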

Recall the following 
\begin{definition}
For an abelian variety $A$ and a line bundle $L$ on $A$ we define
$$K(L)=\{x\in A|T_x^*(L)\cong L\}$$
where $T_x$ is the translation morphism with respect to $x$. We say that $L$ is \emph{non-degenerate} if $K(L)$ is finite, otherwise we say that $L$ is \emph{degenerate}.
\end{definition}
\begin{theorem-definition}\label{p18}
For any non-degenerate line bundle $L$ on $A$ by the vanishing theorem in \cite[Section 16]{Mumford12}, there exists a unique $i\in \mathbb{Z}$, $0\le i\le {\rm dim}(A)$ such that $H^i(X,L)\ne 0$ and we denote this $i$ as $i(L)$.
\end{theorem-definition}
\begin{proposition}\cite[Proposition (9.18)]{MvdG}
$i(L)=0$ for any ample line bundle $L$ on $A$.
\end{proposition}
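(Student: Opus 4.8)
The plan is to reduce the statement to the non-vanishing of $H^0(A,L)$. Since $L$ is ample, $\phi_L\colon A\to\hat A$ is an isogeny (cf. Proposition \ref{p13}), so $K(L)=\ker\phi_L$ is finite and $L$ is non-degenerate; hence $i(L)$ is defined via Theorem-Definition \ref{p18}, and once we exhibit a non-zero global section of $L$ the uniqueness of $i(L)$ forces $i(L)=0$.

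Two preliminary observations will be used. First, by Riemann--Roch on abelian varieties $\chi(A,L)=(L^g)/g!>0$ where $g=\dim A$; combined with the underlying vanishing theorem of Theorem-Definition \ref{p18} (all cohomology of a non-degenerate line bundle concentrated in the single degree $i(L)$) this gives $\chi(A,L)=(-1)^{i(L)}h^{i(L)}(A,L)$, so $i(L)$ is even and $h^{i(L)}(A,L)>0$. Second, for any $\alpha\in{\rm Pic}^0(A)$ the surjectivity of $\phi_L$ yields an $x\in A$ with $T_x^*L\cong L\otimes\alpha$, hence $H^i(A,L\otimes\alpha)\cong H^i(A,L)$ for all $i$; the same applies with $L$ replaced by any ample line bundle.

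The key step is the invariance $i(L^{\otimes n^2})=i(L)$ for a suitable $n$. Let $n$ be an integer not divisible by ${\rm char}(k)$ and let $\mu_n\colon A\to A$ be multiplication by $n$: it is finite étale of degree $n^{2g}$ with $\mu_{n,*}\mathcal O_A\cong\bigoplus_{\alpha}P_\alpha$, the sum running over the $n^{2g}$ points $\alpha\in\hat A[n]\subseteq{\rm Pic}^0(A)$, while the theorem of the cube gives $\mu_n^*L\cong L^{\otimes n^2}\otimes\beta$ for some $\beta\in{\rm Pic}^0(A)$. The projection formula (applied to $\mu_n$, $F=\mathcal O_A$, $G=L$) then gives
$$\bigoplus_{\alpha\in\hat A[n]}H^i(A,L\otimes P_\alpha)=H^i(A,\mu_n^*L)=H^i(A,L^{\otimes n^2}\otimes\beta),$$
and applying the second observation to each summand on the left and to the twist on the right yields $h^i(A,L^{\otimes n^2})=n^{2g}h^i(A,L)$ for every $i$; in particular the unique degree in which cohomology is non-zero is the same for $L$ and $L^{\otimes n^2}$. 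Now choose $n$ not divisible by ${\rm char}(k)$ with $L^{\otimes n^2}$ very ample (e.g.\ $n=2$ or $n=3$, using that $L^{\otimes 3}$ is very ample): then $H^0(A,L^{\otimes n^2})\neq 0$, so $i(L^{\otimes n^2})=0$, hence $i(L)=0$.

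The main obstacle is exactly this invariance step: one must be somewhat careful with the decomposition of $\mu_{n,*}\mathcal O_A$ in positive characteristic, which is why $n$ is taken prime to ${\rm char}(k)$ (for such $n$ this is the standard Kummer-type decomposition into $n$-torsion line bundles of ${\rm Pic}^0(A)$). One could instead invoke the fact, available in \cite{Mumford12} and \cite{MvdG}, that $i(L)$ depends only on the numerical equivalence class of $L$ and is unchanged under $L\mapsto L^{\otimes m}$, but the argument above stays within the results recalled in this section.
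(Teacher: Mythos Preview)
The paper does not give its own proof of this proposition; it is stated with a bare citation to \cite[Proposition (9.18)]{MvdG}. So there is nothing in the paper to compare your argument against.

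Your proof is correct. The chain
\[
\bigoplus_{\alpha\in\hat A[n]}H^i(A,L\otimes P_\alpha)\;\cong\;H^i(A,\mu_{n,*}\mathcal{O}_A\otimes L)\;\cong\;H^i(A,\mu_n^*L)\;\cong\;H^i(A,L^{\otimes n^2}\otimes\beta)
\]
is exactly the projection formula plus finiteness of $\mu_n$, and the translation trick via surjectivity of $\phi_L$ (and of $\phi_{L^{\otimes n^2}}$) legitimately strips the ${\rm Pic}^0$-twists on both sides, giving $h^i(A,L^{\otimes n^2})=n^{2g}h^i(A,L)$. Choosing $n\in\{2,3\}$ prime to ${\rm char}(k)$ and invoking Lefschetz (or simply global generation of $L^{\otimes 2}$, which already forces $H^0(A,L^{\otimes 4})\neq 0$ and $H^0(A,L^{\otimes 9})\neq 0$) finishes it. Two minor remarks: your first paragraph's appeal to Proposition~\ref{p13} for ``$\phi_L$ is an isogeny'' is a slight mis-citation, since that proposition already presupposes the isogeny; and the parity observation $i(L)$ even, while correct, is not used in the rest of your argument and could be omitted.
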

The next proposition is very useful in later sections.
\begin{proposition}\label{p7}
If $L$ is a line bundle on an abelian variety $A$, then there is a unique integer $i\in \mathbb{Z}$, $0\le i\le {\rm dim}(A)$ such that $R\hat{S}(L)=R^i\hat{S}(L)[-i]$ is a sheaf and its support is an abelian subvariety of $\hat{A}$. If $L$ is non-degenerate then this integer is equal to the integer $i(L)$ in Theorem-Definition \ref{p18}.
\end{proposition}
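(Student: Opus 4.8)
The plan is to treat the non-degenerate and degenerate cases separately. Throughout I use that $\mathcal P\otimes p_A^*L$ is flat over $\hat A$, so by base change the support of $R\hat S(L)$ equals $\{y\in\hat A\,|\,H^\bullet(A,L\otimes P_y)\neq 0\}$; since $R\hat S$ is an equivalence (Theorem \ref{p10}) and $L\neq 0$, the object $R\hat S(L)$ is nonzero and this support is nonempty.

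First I would dispose of the case where $L$ is non-degenerate. Then $\phi_L\colon A\to\hat A$ is an isogeny, in particular surjective, so for each $y\in\hat A$ we can write $P_y\cong T_x^*L\otimes L^\vee$ for some $x\in A$; thus $L\otimes P_y\cong T_x^*L$ and $H^i(A,L\otimes P_y)\cong H^i(A,L)$ for all $i$. By the vanishing theorem of \cite[Section 16]{Mumford12} (Theorem-Definition \ref{p18}), $H^i(A,L)=0$ for $i\neq i(L)$ while $H^{i(L)}(A,L)\neq 0$, so the function $y\mapsto h^i(A,L\otimes P_y)$ is identically zero for each $i\neq i(L)$. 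By cohomology and base change (using that $\hat A$ is reduced), $R^i\hat S(L)=0$ for $i\neq i(L)$ and $R^{i(L)}\hat S(L)$ is locally free, commuting with base change; as $R\hat S(L)\neq 0$ this locally free sheaf is nonzero, so $R\hat S(L)=R^{i(L)}\hat S(L)[-i(L)]$ is a shift of a nonzero vector bundle on $\hat A$, whose support is all of $\hat A$. The integer $i=i(L)$ is manifestly unique with this property, and this also proves the last sentence of the proposition.

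For the general case, put $B:=(K(L)^0)_{\rm red}$, an abelian subvariety of $A$ (positive-dimensional exactly when $L$ is degenerate), let $\pi\colon A\to C:=A/B$ be the quotient, and let $\hat\pi\colon\hat C\hookrightarrow\hat A$ be the dual closed immersion, with image the abelian subvariety $\widehat{A/B}$. By Lemma \ref{p4}, replacing $L$ by a translate $T_a^*L$ changes $R\hat S(L)$ only by a twist with a line bundle in ${\rm Pic}^0(\hat A)$, and replacing $L$ by $L\otimes P$ with $P\in{\rm Pic}^0(A)$ changes it only by a translation of $\hat A$; neither operation affects the degree in which $R\hat S(L)$ is concentrated nor the property that its support is a translate of an abelian subvariety. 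Using this freedom, together with the structure theory of line bundles on abelian varieties — the relevant point being that $\phi_L$ is symmetric, so that $B\subseteq\ker\phi_L$ forces ${\rm Im}\,\phi_L=\widehat{A/B}$, whence $\phi_L$ factors through $\hat\pi$ and $c_1(L)\in\pi^*{\rm NS}(C)$ — I would reduce to the case $L=\pi^*M$ for a non-degenerate line bundle $M$ on $C$. The compatibility of the Fourier--Mukai transform with the surjection $\pi$, namely $R\hat S_A(\pi^*M)\cong\hat\pi_*\bigl(R\hat S_C(M)\bigr)[-\dim B]$ (the analogue of Lemma \ref{p11} for $\pi$, proved by the projection formula and flat base change), combined with the non-degenerate case applied on $C$, then gives $R\hat S_A(\pi^*M)=\hat\pi_*\bigl(R^{i(M)}\hat S_C(M)\bigr)[-i(M)-\dim B]$. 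Since $R^{i(M)}\hat S_C(M)$ is a nonzero vector bundle on $\hat C$, this is a sheaf placed in the single degree $i=i(M)+\dim B$ and supported exactly on $\hat\pi(\hat C)=\widehat{A/B}$; undoing the translation and the ${\rm Pic}^0$-twist yields the statement for the original $L$ (its support being then a translate of $\widehat{A/B}$).

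The routine parts are the non-degenerate case and the base-change bookkeeping. The real content — and the main obstacle — is the degenerate case, specifically the descent of $L$, after a translation and a ${\rm Pic}^0$-twist, to a non-degenerate line bundle on $A/B$ (in characteristic $p$ one must watch the scheme structure of $K(L)^0$), and the verification of the compatibility formula $R\hat S_A\circ\pi^*\cong\hat\pi_*\circ R\hat S_C[-\dim B]$ for the surjection $\pi$ (which is not an isogeny).
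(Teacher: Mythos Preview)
Your proposal is correct in spirit and the non-degenerate case is handled essentially as in the paper (you use surjectivity of $\phi_L$ to identify $L\otimes P_y$ with a translate of $L$, while the paper uses that the index depends only on the numerical class; both are fine). The degenerate case, however, follows a genuinely different route.

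The paper does not take the quotient $A\to A/B$. Instead, it applies Poincar\'e's complete reducibility theorem to produce an \emph{isogeny} $\varphi\colon Y\times Z\to A$ with $Z=(K(L)^0)_{\rm red}$, pulls $L$ back to $Y\times Z$ where it becomes (up to a ${\rm Pic}^0$-twist) a box product with one non-degenerate factor and one trivial factor, and then computes $R\hat S_A(L)$ via Lemma~\ref{p11} (which is stated only for isogenies) together with the K\"unneth formula for Fourier--Mukai on a product. Finiteness of the dual isogeny $\hat\varphi$ immediately gives the concentration in a single degree and identifies the support.

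Your approach trades that isogeny for the quotient map $\pi\colon A\to C=A/B$ and its dual closed immersion $\hat\pi$. This is conceptually clean --- it pins down the support directly as (a translate of) $\widehat{A/B}$ --- but it forces you to supply two ingredients that the paper avoids: the descent of $L$ (after a ${\rm Pic}^0$-twist) to a non-degenerate bundle on $C$, and the compatibility $R\hat S_A\circ\pi^*\cong\hat\pi_*\circ R\hat S_C[-\dim B]$. The first follows from the seesaw theorem once one checks that $L$ (so twisted) is trivial on every fibre of $\pi$, using the factorisation of $\phi_L$ through $\hat\pi$ that you noted. The second is true but is \emph{not} an instance of Lemma~\ref{p11}, and ``projection formula and flat base change'' undersells it: one has to use the relation $(1_A\times\hat\pi)^*\mathcal P_A\cong(\pi\times 1_{\hat C})^*\mathcal P_C$ together with the fact that $R\pi_*\mathcal O_A$ is the trivial bundle with fibre $H^\bullet(B,\mathcal O_B)$ to carry this through. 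The paper's use of an isogeny sidesteps both issues at the cost of a slightly less transparent description of the support.
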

\begin{proof}
If $L$ is nondegenerate, then by \cite[p.145 Theorem]{Mumford12}, $i(L)$ can be computed as the number of positive roots of $P(n)=\chi(M^n\otimes L)$ for arbitrary ample line bundles $M$. Since for any $P_y$ we have $\chi(M^n\otimes L)=\chi(M^n\otimes L\otimes P_y)$ we know that $i(L\otimes P_y)$ is independent of $P_y$. By cohomology and base change 
\begin{align*}
R^i\hat{S}(L)\otimes k(y)=& R^ip_{2,*}(p_1^*L\otimes \mathcal{P})\otimes k(y)\cong H^i(A\times\{y\},(p_1^*L\otimes \mathcal{P})|_{A\times\{y\}}) \\
=& H^i(A,L\otimes P_y).
\end{align*}
So $R^i\hat{S}(L)\ne 0$ if and only if $i=i(L)$. Therefore we have
$$R\hat{S}(L)=R^{i(L)}\hat{S}(L)[-i(L)]$$ 
and it is supported on $\hat{A}$.

If $L$ is degenerate let $K(L)^0$ be the connected component of $K(L)$ containing the origin. Let $Z=(K(L)^0)_{\rm red}$ which by \cite[Proposition 5.31]{MvdG} is an abelian subvatiety of $A$. By Poincar\'{e}'s complete reducibility theorem  (cf. \cite[p.160 Theorem]{Mumford12}) there is an isogeny $\varphi:Y\times Z\to A$ where $Y$ is an abelian subvariety of $A$. By the proof of \cite[Proposition 9.27]{MvdG} there is a non-degenerate line bundle $L_Z$ on $Z$ such that $\varphi^*L=p_Z^*L_Z\otimes P_x$ for some $x\in \hat{Y}\times\hat{Z}$. By Lemma \ref{p4}, Lemma \ref{p11} and \cite[Exercise 5.13]{Huybrechts06} 
\begin{align*}
\hat{\varphi}_*R\hat{S}_A(L)=& R\hat{S}_{Y\times Z}(\varphi^*L)=R\hat{S}_{Y\times Z}(p_Z^*L_Z\otimes P_x)=T_x^*R\hat{S}_{Y\times Z}(p_Z^*L_Z) \\
=& T_x^*R\hat{S}_{Y\times Z}(L_Z\boxtimes \mathcal{O}_Y)=T_x^*(R\hat{S}_Z(L_Z)\boxtimes R\hat{S}_Y(\mathcal{O}_Y)) \\
=& T_x^*(R^{i(L_Z)}\hat{S}_Z(L_Z)[-i(L_Z)]\boxtimes R^{{\rm dim}(Y)}\hat{S}_Y(\mathcal{O}_Y)[-{\rm dim}(Y)]). 
\end{align*} 
By the fact that $\hat{\varphi}$ is finite we get
$$R\hat{S}_A(L)=R^{i(L_Z)+{\rm dim}(Y)}\hat{S}_A(L)[-(i(L_Z)+{\rm dim}(Y))].$$
Observe that 
$$0\le {\rm dim}(Y)\le i(L_Z)+{\rm dim}(Y)\le {\rm dim}(Z)+{\rm dim}(Y)\le {\rm dim}(A).$$
Moreover since $R^{i(L_Z)}\hat{S}_Z(L_Z)$ is supported on $\hat{Z}$ and $R^{{\rm dim}(Y)}\hat{S}_Y(\mathcal{O}_Y)$ is supported on the origin $e_{\hat{Y}}$ on $\hat{Y}$ we know that $R\hat{S}_A(L)$ is supported on $\hat{\varphi}^{-1}\circ T_x(\hat{Z}\times {e_{\hat{Y}}})$ which is an abelian subvariety in $\hat{A}$. 
\end{proof}
\begin{definition}\label{p17}
For a nondegenerate line bundle $L$ on an abelian variety $A$ we have seen that the integer constructed in Proposition \ref{p7} is compatible with the $i(L)$ in Theorem-Definition \ref{p18}. So we define this integer to be the \textit{index} of $L$ and still use $i(L)$ to denote it. Moreover we denote $R^{i(L)}\hat{S}(L)$, viewed as a sheaf on $\hat{A}$, by $\widehat{L}$. 
\end{definition}
\begin{remark}
Note that if $L$ is an ample line bundle then $\widehat{L}$ is locally free (see \cite[Example 2.2]{PP03} and \cite[right before Corollary 2.4]{Mukai81}).
\end{remark}
\begin{remark}
In the situation of Definition \ref{p17}, by \cite[(3.8)]{Mukai81} we have 
\begin{align}\label{p16}
\widehat{L}^{\vee}=(-1_A)^*\widehat{L^{\vee}}.
\end{align}
\end{remark}
Next we will mention some concepts and facts related to generic vanishing theorems.
\begin{definition}\label{p3}
Let $A$ be an abelian variety of dimension $n$ and $\mathcal{F}$ a coherent sheaf on $A$. We define 
$V^i(\mathcal{F})$, the \emph{cohomology support loci}, as $$V^i(\mathcal{F})=\{P\in {\rm Pic}^0(X)|h^i(A,\mathcal{F}\otimes P)> 0\}.$$ $\mathcal{F}$ is said to \emph{satisfy Generic Vanishing with index $-k$}, or to be $GV_{-k}$, if codim$_{{\rm Pic}^0(X)}V^i(\mathcal{F})\ge i-k$ for all $i\ge 0$. 
\end{definition}

The following generic vanishing theorem can be deduced from \cite[Theorem 1.2]{Hacon04} and \cite[Theorem A]{PP11}.
\begin{theorem}\label{p5}
With the notation as in \ref{p3}, the following are equivalent: \\
(1) $\mathcal{F}$ is $GV_{0}$. \\
(2) $H^i(A,\mathcal{F}\otimes\widehat{L}^{\vee})=0$ for any $i>0$ and any sufficiently ample line bundle $L$ on $X$.\\
(3) $R\hat{S}(D_A(\mathcal{F}))=R^0\hat{S}(D_A(\mathcal{F}))$.
\end{theorem}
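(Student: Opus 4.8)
The plan is to obtain all three equivalences by translating the statement into the setup of \cite{Hacon04} and \cite{PP11} and then invoking their theorems; the substance of the argument is the bookkeeping of dualizing complexes, shifts, and Fourier--Mukai normalizations, not any new idea.

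\emph{Step 1 (rewriting $D_A$).} Since $A$ is an abelian variety, $\omega_A\cong\mathcal{O}_A$ and its dualizing complex is $\mathcal{O}_A[g]$ with $g=\dim A$, so $D_A(\mathcal{F})=R\mathcal{H}om_{\mathcal{O}_A}(\mathcal{F},\mathcal{O}_A)[g]=R\Delta_A(\mathcal{F})[g]$, where $R\Delta_A(-)=R\mathcal{H}om_{\mathcal{O}_A}(-,\mathcal{O}_A)$. Under this identification condition (3), $R\hat{S}(D_A\mathcal{F})=R^{0}\hat{S}(D_A\mathcal{F})$, becomes $R^{i}\hat{S}(R\Delta_A\mathcal{F})=0$ for all $i\neq g$, i.e. $R\Delta_A\mathcal{F}$ is $WIT_g$ in the sense of Mukai. (One should double-check the normalization on a test case such as $\mathcal{F}=\mathcal{O}_A$, where $R\hat{S}(\mathcal{O}_A)=k(0_{\hat A})[-g]$ by Theorem~\ref{p10}, hence $R\hat{S}(D_A\mathcal{O}_A)=k(0_{\hat A})$ is a sheaf, consistent with $\mathcal{O}_A$ being $GV_0$.)

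\emph{Step 2 ((1) $\Leftrightarrow$ (3)).} After Step~1 this is exactly the characterization of $GV$-sheaves in \cite[Theorem A]{PP11} (and its derived-category form in \cite[Theorem 1.2]{Hacon04}): $\mathcal{F}$ is $GV_0$ iff $R\Delta_A\mathcal{F}$ is $WIT_g$. I would simply quote this. I would resist the temptation to "re-prove" it via cohomology and base change alone: although base change relates the naive supports of the $R^{i}\hat{S}(R\Delta_A\mathcal{F})$ to the loci $V^{g-i}(\mathcal{F})$, it is precisely the failure of base change along the jump loci — as the $\mathcal{O}_A$ example shows, $V^{i}(\mathcal{O}_A)=\{0\}\neq\varnothing$ for all $i$ while $R^{i}\hat S(R\Delta_A\mathcal O_A)=0$ for $i<g$ — that makes the codimension condition (1) equivalent to the clean vanishing (3), and pinning this down is the actual work done in loc.\ cit.

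\emph{Step 3 ((2) $\Leftrightarrow$ (3)).} For $L$ ample on $\hat{A}$ we have $i(L)=0$, so $\widehat{L}=R^{0}S(L)$ is the locally free sheaf of Definition~\ref{p17}; applying Proposition~\ref{p13} with $A$ and $\hat{A}$ interchanged gives $\phi_L^{*}\widehat{L}^{\vee}\cong\bigoplus^{h^{0}(L)}L$, confirming that $\widehat{L}^{\vee}$ is an ample vector bundle on $A$. Then $H^{i}(A,\mathcal{F}\otimes\widehat{L}^{\vee})=\mathrm{Ext}^{i}_A(\widehat{L},\mathcal{F})$, and since $R\hat{S}$ is an equivalence (Theorem~\ref{p10}) with $R\hat{S}(\widehat{L})=R\hat{S}\,RS(L)=(-1_{\hat A})^{*}L[-g]$, this Ext group is computed by the hypercohomology on $\hat A$ of $R\hat{S}(\mathcal{F})$ twisted by $L$ (up to shift), which in turn is governed by $R\hat{S}(D_A\mathcal{F})$ via the Mukai–Grothendieck compatibility $R\hat{S}\circ D_A\cong(-1_{\hat A})^{*}\circ D_{\hat A}\circ R\hat{S}$ (up to shift, cf. \cite{Mukai81}). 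For $L$ sufficiently ample, Serre vanishing on $\hat A$ collapses the relevant spectral sequence, and one reads off that $H^{i}(A,\mathcal{F}\otimes\widehat{L}^{\vee})=0$ for all $i>0$ exactly when $R\hat{S}(D_A\mathcal{F})$ sits in degree $0$ — this is the argument of \cite{Hacon04} and \cite[Theorem A]{PP11}.

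\emph{Main obstacle.} There is no deep obstacle: the theorem is a repackaging of known results, and its proof is a careful application of \cite{Hacon04} and \cite{PP11}. The part needing the most care is tracking shifts consistently — in $D_A=R\Delta_A[g]$, in $R\hat{S}\circ RS=(-1)^{*}[-g]$, and in $R\hat{S}\circ D_A\cong(-1)^{*}\circ D_{\hat A}\circ R\hat{S}$ — and fixing the precise meaning of "sufficiently ample $L$" so that Serre vanishing applies uniformly; neither is a genuine difficulty.
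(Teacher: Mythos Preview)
Your proposal is correct and matches the paper's approach: the paper does not give its own proof of this theorem but simply states that it ``can be deduced from \cite[Theorem 1.2]{Hacon04} and \cite[Theorem A]{PP11},'' which is exactly what you do, with the added (and useful) bookkeeping of shifts and normalizations.
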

\begin{remark}
In this paper we use the notion of ``sufficiently ample" line bundle on a variety $X$ to mean, given any ample line bundle $L$, a power $L^{\otimes m}$ with $m\gg 0$.
\end{remark}
The following fact due to Pink and Roessler (of \cite{PR04}) will also play an important role.
\begin{proposition}\label{p19}
Let $X$ be a projective variety of dimension $n$ over $k$ and assume that the Picard variety of $X$ has no supersingular factors. Let
$$S_m^{i,j}(X)=\{P\in {\rm Pic}^0(X)|h^{i,j}(X,P)\ge m\}$$
for any $i,j,m\ge 0$. Then $S_m^{i,j}$ is completely linear, i.e. its irreducible components are translates of abelian subvarieties by torsion elements in ${\rm Pic}^0(X)$. In particular, $V^1(\omega_X)=S_1^{1,n}(X)$ is completely linear.
\end{proposition}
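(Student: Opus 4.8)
Since Proposition \ref{p19} is due to Pink and Roessler (\cite{PR04}), the proof essentially amounts to a citation; let me nonetheless lay out the strategy and isolate where the substance lies. If ${\rm char}(k)=0$ this is the classical result of Green--Lazarsfeld and Simpson, so assume ${\rm char}(k)=p>0$. First, $S^{i,j}_m(X)$ is Zariski closed in ${\rm Pic}^0(X)$: the function $P\mapsto h^{i,j}(X,P)=\dim_k H^j(X,\Omega^i_X\otimes P)$ is upper semicontinuous, being $\dim_k H^j$ of the member over $P$ of the flat family $p_1^{*}\Omega^i_X\otimes\mathcal{P}$ on $X\times{\rm Pic}^0(X)$, where $\mathcal{P}$ is the universal line bundle. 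Next, the formation of $S^{i,j}_m(X)$ is compatible with extension of the algebraically closed base field, and the property of being a finite union of torsion translates of abelian subvarieties is unaffected by such an extension (torsion subgroup schemes, abelian subvarieties and irreducible components all descend and ascend). Hence one may assume $k=\overline{\mathbb{F}}_p$, and then, by spreading out over a finitely generated $\mathbb{F}_p$-algebra $R$ and specializing to a suitable closed point of ${\rm Spec}\,R$, reduce to the case where $X$ is the base change of a smooth projective $X_0$ over a finite field $\mathbb{F}_q$; one must arrange this so that ${\rm Pic}^0(X_0)$ inherits the absence of supersingular factors, which is already a delicate point.

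Over $X_0/\mathbb{F}_q$, the dimensions $h^{i,j}(X,P)$ are invariant under the ${\rm Gal}(\overline{\mathbb{F}}_p/\mathbb{F}_q)$-action on ${\rm Pic}^0(X)$, so $S^{i,j}_m(X)$ is defined over $\mathbb{F}_q$ and is in particular stable under the Frobenius endomorphism of $\widehat{A}:={\rm Pic}^0(X)$; in fact one shows, as in \cite{PR04}, that it is stable in the stronger sense (involving the preimage under a Frobenius-type isogeny) that actually drives the argument. The heart of the matter is then the arithmetic statement of Pink and Roessler: a reduced closed subscheme of an abelian variety over $\overline{\mathbb{F}}_p$ with no supersingular factors that is Frobenius-stable in this sense is a finite union of torsion translates of abelian subvarieties. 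Its proof proceeds, roughly, by replacing $\widehat{A}$ with the quotient by the identity component of the stabilizer, reducing to the claim that an irreducible such subscheme with finite stabilizer is a single point; the positive-dimensional case is excluded by playing the Weil estimates on the eigenvalues of Frobenius — of absolute value $\sqrt{q}$, hence never roots of unity — against the contracting/expanding behaviour of Frobenius along the subscheme, with the no-supersingular hypothesis supplying the control on the $p$-divisible part that the $\ell$-adic argument alone cannot; the surviving point, being fixed by a power of Frobenius, is defined over a finite field and therefore torsion. This arithmetic step is the genuine obstacle; everything surrounding it is formal, and for it I would simply invoke \cite{PR04}.

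For the final assertion, $\omega_X=\Omega^n_X$ gives $V^1(\omega_X)=\{P\in{\rm Pic}^0(X)\mid H^1(X,\omega_X\otimes P)\neq 0\}=S^{n,1}_1(X)$, which is completely linear by the case just treated; its identification with $S^{1,n}_1(X)$ follows from Serre duality, which yields $S^{i,j}_m(X)=(-1_{\widehat{A}})^{*}S^{n-i,n-j}_m(X)$, combined with the Hodge-type symmetry $h^{i,j}(X,P)=h^{j,i}(X,P)$ valid in this setting.
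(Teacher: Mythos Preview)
Your proposal is essentially the same as the paper's: the paper's entire proof is the one-line citation ``By \cite[Proposition 4.1 and Proposition 3.1]{PR04}'', and you likewise reduce everything to Pink--Roessler, adding only an expository sketch of their argument.

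One small point: your detour through Serre duality and a ``Hodge-type symmetry $h^{i,j}(X,P)=h^{j,i}(X,P)$'' to reconcile $V^1(\omega_X)=S^{n,1}_1$ with the paper's $S^{1,n}_1$ is unnecessary and, as stated, unjustified in positive characteristic (Hodge symmetry can fail even when the Hodge--de Rham spectral sequence degenerates, and the proposition as stated does not even assume smoothness or liftability). The intended reading is simply the convention $h^{i,j}(X,P)=\dim H^i(X,\Omega^j_X\otimes P)$, under which $V^1(\omega_X)=\{P:H^1(X,\Omega^n_X\otimes P)\neq 0\}=S^{1,n}_1(X)$ directly, with nothing further to check.
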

\begin{proof}
By \cite[Proposition 4.1 and Proposition 3.1]{PR04}.
\end{proof}

\subsection{Lifting properties of algebraic varieties}
For an algebraically closed field $k$ of characteristic $p>0$ let $W_2(k)$ be the ring of the second Witt vectors of $k$ (for details see \cite[Example 8.8]{EV92}). Let $X$ be a scheme over $k$ and denote $S={\rm Spec}(k)$ and $\widetilde{S}={\rm Spec}(W_2(k))$. A \textit{lifting} of $X$ to $\widetilde{S}$ is a scheme $\widetilde{X}$, flat over $\tilde{S}$, such that $X=\widetilde{X}\times_{\widetilde{S}}S$. We say that $X$ \textit{lifts to} $W_2(k)$ or $X$ \textit{is liftable to} $W_2(k)$ if $X$ has a lifting to $W_2(k)$. 

We define $X'$ by $X'=X\times_{S}S$ via the Frobenius morphism $F:S\to S$ and denote by $F'$ the morphism $X\to X'$ naturally defined by $X\to S$ and the Frobenius morphism $F:X\to X$.
\begin{theorem}\cite[Th\'eor\`eme 2.1]{DI87}\label{p12}
With the notation above, if $X$ lifts to $W_2(k)$ then the following isomorphism holds in $D(X')$:
$$\varphi:\bigoplus_{i<p}\Omega_{X'/S}^i\xrightarrow{\cong}\tau_{<p}F'_*\Omega_{X/S}^{\cdot}.$$
Moreover the $i$-th cohomology of $\varphi$ is the Cartier isomorphism for $i<p$.
\end{theorem}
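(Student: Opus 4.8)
The plan is to follow the strategy of Deligne--Illusie: build local splittings out of a local lift of the Frobenius, glue them in the derived category using the fact that any two lifts differ by a chain homotopy, and then promote the degree-$1$ statement to all degrees $<p$ by taking wedge powers.

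First I would fix, for each sufficiently small affine open $U\subseteq X$, the induced lift $\widetilde U\subseteq\widetilde X$ over $\widetilde S$ together with a lift $\widetilde F\colon\widetilde U\to\widetilde U'$ of the relative Frobenius $F'\colon U\to U'$; such a lift exists because $U$ is smooth and affine, so the obstruction, which lies in $H^1(U,\mathcal{H}om(F'^{*}\Omega^1_{X'},\mathcal O_X))$, vanishes. The crucial elementary point is that $\widetilde F^{*}\colon\Omega^1_{\widetilde U'/\widetilde S}\to\Omega^1_{\widetilde U/\widetilde S}$ is divisible by $p$: modulo $p$ it is pullback along Frobenius, which annihilates $\Omega^1$ since $d(x^p)=px^{p-1}\,dx=0$ in characteristic $p$. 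Because $p^2=0$ in $W_2(k)$, the operator $\tfrac1p\widetilde F^{*}$ kills $p\Omega^1_{\widetilde U'}$ and hence descends to an $\mathcal O_{X'}$-linear map $\phi_{\widetilde F}\colon\Omega^1_{X'}\to F'_*\Omega^1_X$; a computation in local coordinates (where $\phi_{\widetilde F}(dx_i')=x_i^{p-1}\,dx_i+da_i$ for a suitable lift) shows its image consists of closed forms and that composing with the Cartier operator $C$ gives the identity. Thus $\phi_{\widetilde F}$, regarded as a map $\Omega^1_{X'}\to\tau_{\le 1}F'_*\Omega^\bullet_X$ with $\Omega^1_{X'}$ placed in degree $1$, is a quasi-isomorphism over $U$.

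Next I would globalize. On an overlap, two chosen Frobenius lifts $\widetilde F_i,\widetilde F_j$ differ by $\tfrac1p(\widetilde F_i^{*}-\widetilde F_j^{*})$, which reduces to an $\mathcal O_{X'}$-linear map $h_{ij}\colon\Omega^1_{X'}\to F'_*\mathcal O_X$ satisfying $\phi_{\widetilde F_i}-\phi_{\widetilde F_j}=d\circ h_{ij}$ and the additivity $h_{ij}+h_{jk}=h_{ik}$ on triple overlaps. This \v{C}ech $1$-cochain together with the local splittings $\phi_{\widetilde F_i}$ is exactly the data needed to define a morphism $\varphi_1\colon\Omega^1_{X'}\to\tau_{\le 1}F'_*\Omega^\bullet_X$ in $D(X')$, independent of the covering and of the lifts (checked by the standard argument with the simple complex of the \v{C}ech bicomplex). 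By the local statement, $\varphi_1$ is an isomorphism onto $\tau_{\le 1}F'_*\Omega^\bullet_X$.

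Finally I would pass to higher degrees: using a multiplicative representative of $F'_*\Omega^\bullet_X$ one forms $\bigwedge^i\varphi_1$ and composes with the natural map to $\tau_{\le i}F'_*\Omega^\bullet_X$, obtaining $\varphi_i\colon\Omega^i_{X'}\to\tau_{\le i}F'_*\Omega^\bullet_X$, and then $\varphi=\bigoplus_{i<p}\varphi_i$ mapping into $\tau_{<p}F'_*\Omega^\bullet_X$. The restriction $i<p$ enters precisely here: comparing $\bigwedge^i$ of the local splittings with the honest splitting in degree $i$ forces division by $i!$, which is invertible only for $i<p$ — this is the one place the truncation at $p$ is needed. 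To conclude, it suffices to check that for each $i<p$ the induced map $\mathcal H^i(\varphi)\colon\Omega^i_{X'}\to\mathcal H^i(F'_*\Omega^\bullet_X)$ is an isomorphism; by construction it is the inverse Cartier isomorphism $C^{-1}$, which gives simultaneously the asserted isomorphism $\varphi$ in $D(X')$ and the last sentence of the statement. The main obstacle is the gluing step: verifying that the homotopy data $(\phi_{\widetilde F_i},h_{ij})$ descends to a well-defined morphism in $D(X')$ independent of all choices, and then controlling the wedge-power construction carefully enough to see exactly why degrees $\ge p$ must be excluded.
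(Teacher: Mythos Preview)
The paper does not supply a proof of this theorem: it is stated as a citation of \cite[Th\'eor\`eme 2.1]{DI87} and used as a black box in the proof of Theorem~\ref{1}. Your proposal is a faithful sketch of the original Deligne--Illusie argument (local Frobenius lifts giving splittings, \v{C}ech gluing via homotopies, and antisymmetrization to reach degrees $<p$), so there is nothing to compare against in the paper itself.
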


\subsection{Preliminaries on surfaces in positive characteristics}
\begin{definition}\label{Irreg&Euler}
Let $X$ be a surface. We define the \emph{irregularity} $q(X)$ of $X$ to be $h^1(X,\mathcal{O}_X)$, and define the \emph{Euler characteristic} of $X$ to be $\chi(\mathcal{O}_X)$. It is easy to see that for a surface $X$ we have $\chi(\mathcal{O}_X)=\chi(\omega_X)$.
\end{definition}
\begin{definition}
An \emph{irrational pencil of genus $g$} on a surface $X$ is a fibration $p: X\to B$ where $B$ is a smooth curve of genus $g\ge 1$.
\end{definition}
\begin{proposition}[Bogomolov–Miyaoka–Yau inequality in positive characteristics]\label{p6}\cite[Theroem 13]{Langer15}
Let $X$ be a minimal minimal surface of general type. If ${\rm char}(k)\ge 3$ and $X$ is liftable to $W_2(k)$, then 
$$K_X^2\le 9\chi(\mathcal{O}_X).$$
\end{proposition}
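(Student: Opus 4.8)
The plan is to deduce the inequality from a Bogomolov-type estimate for the cotangent bundle, as in Miyaoka's algebraic proof of the Bogomolov--Miyaoka--Yau inequality over $\mathbb{C}$, while tracking where positive-characteristic phenomena force new input. By Noether's formula $12\chi(\mathcal{O}_X)=K_X^2+c_2(X)$ the desired bound $K_X^2\le 9\chi(\mathcal{O}_X)$ is equivalent to $c_1(X)^2\le 3c_2(X)$, so it is enough to produce the latter. Since $X$ is minimal of general type, $K_X$ is nef and big, and the first step is to show that $\Omega_X^1$ is semistable with respect to $K_X$: a destabilizing sub-line bundle of $\Omega_X^1$ would give a foliation $\mathcal{F}\subseteq T_X$ too positive to coexist with $K_X$ nef and big, so that the discriminant estimate applies in the semistable case.

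The hypotheses ${\rm char}(k)\ge 3$ and liftability to $W_2(k)$ enter because both ingredients of this scheme --- semistability of $\Omega_X^1$ and the Bogomolov inequality for it --- can fail in characteristic $p$, since Frobenius pullbacks may destabilize sheaves and there exist $p$-closed foliations whose quotient surfaces have anomalous invariants, yielding genuine counterexamples to the naive statement in small characteristic. The remedy is Theorem \ref{p12}: as $\dim X=2<p$, the de Rham complex of $X$ decomposes after Frobenius pushforward as $\bigoplus_{i<p}\Omega_{X'/S}^i$, and this is precisely what is needed to control the Harder--Narasimhan filtration of $\Omega_X^1$ under iterated Frobenius, so that a positive-characteristic Bogomolov-type inequality for $\Omega_X^1$ (and, more conceptually, a Bogomolov inequality for the Higgs/de Rham sheaf attached to $X$, which lives on a lift to $W_2(k)$) becomes available whenever $\Omega_X^1$ is semistable.

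Next one must sharpen the constant: the bare inequality $\Delta(\Omega_X^1)=4c_2(X)-c_1(X)^2\ge 0$ only gives $K_X^2\le\tfrac{48}{5}\chi(\mathcal{O}_X)$, and reaching the optimal constant $9$ requires, following Miyaoka, analysing $\mathbb{P}(\Omega_X^1)$ together with the restrictions of $\Omega_X^1$ to general complete-intersection curves and exploiting again that $K_X$ is nef and big. In the complementary case, in which $\Omega_X^1$ is not $K_X$-semistable, one passes to the foliation $\mathcal{F}\subseteq T_X$ cut out by a maximal destabilizing subsheaf and uses the Ekedahl--Shepherd-Barron structure theory of foliated surfaces in characteristic $p$ to bound the invariants of $X$ in terms of those of the quotient $X/\mathcal{F}$, once more invoking the $W_2(k)$-lifting to rule out the exotic configurations; this case likewise yields $K_X^2\le 9\chi(\mathcal{O}_X)$.

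The main obstacle is this last circle of ideas: passing from the one-step Deligne--Illusie decomposition of Theorem \ref{p12} to a statement about all Frobenius twists at once (strong semistability, or the Bogomolov inequality for Higgs sheaves), and carrying out the foliation analysis in the unstable case precisely enough to recover the sharp constant rather than the weaker bound that comes from Bogomolov's inequality alone. This is exactly what is accomplished in \cite{Langer15}, to which we refer for the details.
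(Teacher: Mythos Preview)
The paper does not prove this proposition at all: it is stated in the preliminaries section with a bare citation to \cite[Theorem 13]{Langer15} and no argument. Your proposal is therefore not comparable to a proof in the paper, because there is none; you have instead written an expository sketch of the ideas behind Langer's argument and then, like the paper, deferred to \cite{Langer15} for the actual work.

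As an outline your sketch is broadly accurate: the reduction to $c_1^2\le 3c_2$ via Noether is correct, the observation that the plain Bogomolov inequality $\Delta(\Omega_X^1)\ge 0$ only yields $K_X^2\le \tfrac{48}{5}\chi$ is correct, and the identification of the $W_2(k)$-lifting with $p\ge 3>\dim X$ as the input that makes Langer's Higgs-sheaf Bogomolov inequality and strong-semistability arguments available is the right diagnosis of where the hypotheses are used. That said, a few of your attributions are imprecise (for instance, the unstable case in Langer's argument is handled via his Bogomolov inequality for the Harder--Narasimhan pieces rather than via Ekedahl--Shepherd-Barron foliation quotients in the way you describe), so what you have written should be regarded as motivation rather than a proof sketch that could be completed without consulting the reference. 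Since the paper itself makes no attempt at a proof, simply citing \cite{Langer15} is entirely adequate here.
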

The following proposition is well-known, but since its proof is very short we still would like to include it.
\begin{proposition}\label{p8}
If $S$ is an elliptic surface over $k$, then $S$ is not of general type. 
\end{proposition}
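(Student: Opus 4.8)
The plan is to prove Proposition \ref{p8} by computing the Kodaira dimension of an elliptic surface directly from the structure of the relatively minimal elliptic fibration, using the canonical bundle formula. First I would reduce to the case of a relatively minimal elliptic fibration $f\colon S\to C$ over a smooth curve $C$ (contracting $(-1)$-curves in fibers does not change the Kodaira dimension and preserves the property of being an elliptic surface). Since this is a statement about Kodaira dimension, I may also pass to the associated minimal model if needed, or simply bound $\kappa$ from above on any smooth birational model.

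Next I would invoke the canonical bundle formula for a relatively minimal elliptic surface: $\omega_S \cong f^*\bigl(\omega_C \otimes L\bigr)\otimes \mathcal{O}_S\bigl(\sum_i (m_i-1)F_i\bigr)$, where $L = (R^1 f_*\mathcal{O}_S)^{\vee}$ has non-negative degree, the $F_i$ are the (reductions of the) multiple fibers with multiplicities $m_i$, and in positive characteristic one must be careful to also allow the wild fiber contributions, i.e. the exponents of the multiple fibers may be larger than $m_i-1$ but the total divisor is still supported on fibers and its pushforward to $C$ is a divisor whose degree is controlled — this is the version due to Bombieri–Mumford (\cite{BM76}, \cite{BM77}). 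The key point is that $\omega_S$ is, up to linear equivalence, the pullback of a divisor $D$ on $C$ together with an effective vertical divisor $V$ supported on fibers. Then for any $n\ge 1$, a section of $\omega_S^{\otimes n}$ restricts to a section of $\omega_F^{\otimes n}=\mathcal{O}_F$ on a general fiber $F$, which is either zero or nowhere vanishing; hence $h^0(S,\omega_S^{\otimes n})$ is bounded by the number of sections of $f_*\omega_S^{\otimes n}$ on $C$, which is a line bundle (or rank-one sheaf) of bounded degree plus a vertical correction. A careful bookkeeping shows $h^0(S,\omega_S^{\otimes n})$ grows at most linearly in $n$, so $\kappa(S)\le 1 < 2$ and $S$ is not of general type.

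The main obstacle I anticipate is the positive-characteristic subtlety of multiple fibers: in characteristic $p$ one can have wild fibers, where the multiplicity of $F_i$ in the canonical bundle formula exceeds $m_i-1$, and a priori one worries that these extra contributions could inflate $\kappa$. The resolution is that these corrections are still vertical — supported on finitely many fibers — so after pushing forward by $f$ they contribute a divisor on $C$ of fixed (characteristic-dependent but $n$-independent rate) degree, and the restriction argument to a general fiber still forces any pluricanonical section to be pulled back from $C$ up to that fixed vertical divisor. Thus $\dim |n K_S|$ is $O(n)$ regardless of wild fibers. An alternative, perhaps cleaner route that sidesteps the fiber analysis entirely: observe that a general fiber $F$ is a smooth genus-one curve, so $K_S\cdot F = \deg \omega_F = 0$ by adjunction; since $F$ moves in a base-point-free pencil it is nef, and if $S$ were of general type then $K_S$ (on the minimal model) would be nef and big, giving $K_S^2 > 0$, yet $K_S\cdot F = 0$ with $F$ nef and $F^2=0$ would contradict the Hodge index theorem unless $K_S$ is numerically proportional to $F$, whence $K_S^2 = 0$ — a contradiction with bigness. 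I would likely present this Hodge-index argument as the primary proof since it is short and characteristic-free, and mention the canonical bundle formula approach only if the multiple-fiber subtleties need to be addressed for non-minimal models.
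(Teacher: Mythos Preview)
Your proposal is correct, and your secondary (Hodge-index) approach is very close to what the paper does --- both hinge on the same adjunction computation $K_F = (K_S+F)|_F = K_S|_F$ for a general fiber $F$, hence $K_S\cdot F = \deg K_F = 0$. The paper's argument is even shorter than your Hodge-index version: since $K_S$ is big one writes $K_S \equiv A + N$ with $A$ ample and $N\ge 0$, so for a general fiber $F$ (not contained in $\operatorname{Supp} N$) one has $K_S\cdot F \ge A\cdot F > 0$, contradicting $\deg K_F = 0$. This avoids passing to the minimal model and invoking nefness of $K_S$, so it is a touch more direct than your route through the Hodge index theorem (where you need $K_S$ nef and big to get $K_S^2>0$).

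Your primary proposal via the Bombieri--Mumford canonical bundle formula is also correct and has the virtue of actually computing $\kappa(S)\le 1$, but it is considerably heavier machinery than needed here; the paper deliberately gives the three-line adjunction argument precisely because, as it says, the result is well known and the proof is very short. One small wording issue in your Hodge-index sketch: the conclusion from $K_S^2>0$, $K_S\cdot F=0$, $F^2=0$ is that $F$ is numerically trivial (not merely proportional to $K_S$), which is already a contradiction since $F$ is a nonzero effective curve; your phrasing gets there but takes an unnecessary detour.
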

\begin{proof}
Let $f:S\to B$ be an elliptic fibration of $S$ and $E$ a general fiber. If $S$ is of general type by the adjunction formula 
$$K_E=(K_S+E)|_E=K_S|_E$$
is big. However since $E$ is an elliptic curve we know $K_E=\mathcal{O}_E$ is not big. Contradiction.
\end{proof}

\subsection{Some facts about torsion-free sheaves}
\begin{lemma}\label{p2}
Let $Z$ be an irreducible variety and suppose that $L$ is a line bundle and $\mathcal{F}$ is a torsion-free coherent sheaf on $Z$. If $\varphi:L\to \mathcal{F}$ is a nonzero morphism then $\varphi$ is injective.
\end{lemma}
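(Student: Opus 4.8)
The plan is to reduce everything to the generic point of $Z$, where $L$ becomes a one-dimensional vector space over the function field $K(Z)$, and then transfer the conclusion back to stalks using torsion-freeness of $\mathcal{F}$.

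First I would consider the generic point $\eta \in Z$ and the induced $K(Z)$-linear map $\varphi_\eta : L_\eta \to \mathcal{F}_\eta$. Since $L$ is a line bundle, $L_\eta$ is a one-dimensional $K(Z)$-vector space, so $\varphi_\eta$ is either zero or injective. I claim it cannot be zero: if $\varphi_\eta = 0$, then the image subsheaf $\operatorname{im}(\varphi) \subseteq \mathcal{F}$ has vanishing stalk at $\eta$, hence is supported on a proper closed subset, i.e. is a torsion subsheaf of $\mathcal{F}$; because $\mathcal{F}$ is torsion-free this forces $\operatorname{im}(\varphi) = 0$, contradicting $\varphi \neq 0$. (Here I use that $Z$ is irreducible, so that a sheaf whose stalk at the unique generic point vanishes is genuinely torsion.)

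Next I would look at $\mathcal{K} := \ker(\varphi) \subseteq L$. Taking stalks at $\eta$ and using exactness of localization, $\mathcal{K}_\eta = \ker(\varphi_\eta) = 0$ by the previous step. Thus $\mathcal{K}$ is a coherent subsheaf of the line bundle $L$ with trivial generic stalk; in other words $\mathcal{K}$ is a torsion sheaf. But $L$ is locally free, in particular torsion-free, so any torsion subsheaf of $L$ is zero. Hence $\mathcal{K} = 0$, which is precisely the injectivity of $\varphi$.

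There is essentially no hard step here; the only point that needs a little care is the identification "stalk at the generic point is zero $\Rightarrow$ torsion," which is where irreducibility of $Z$ enters, and the observation that a line bundle is torsion-free so that its subsheaves inherit that property. Everything else is formal manipulation with stalks and exactness of localization.
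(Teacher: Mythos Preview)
Your proof is correct and follows essentially the same approach as the paper: both first argue that $\operatorname{im}(\varphi)$ cannot be torsion (so $\varphi$ is nonzero generically / on every affine), and then use torsion-freeness of $L$ to kill the kernel. The only difference is cosmetic --- you phrase everything at the generic point $\eta$, while the paper restricts to an affine chart and argues with elements --- but the underlying idea is identical.
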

\begin{proof}
We know that ${\rm Supp}({\rm im}(\varphi))$ is a closed subset. On the other hand $\mathcal{F}$ is torsion-free so it cannot contain a torsion sheaf, so ${\rm Supp}({\rm im}(\varphi))=Z$. Next we restrict the morphism to an affine open set $U={\rm Spec}(A)$ and suppose $L|_U=\tilde{M}=\tilde{(Ae)}$ and $\mathcal{F}|_U=\tilde{N}$ where $M$ and $N$ are $A$-modules and we denote by $\phi:M\to N$ the induced homomorphism. Suppose $\varphi(e)=n\in N$ then by the above argument $n\ne 0$. If there exists a nonzero $a$ in $A$ such that $\varphi(ae)=a\cdot n=0$ then it contradicts the fact that $N$ is torsion-free.
\end{proof}
\begin{lemma}\label{p15}
Let $X$ and $Y$ be projective varieties and $f: X\to Y$ a finite morphism. If $\mathcal{F}$ is a torsion-free sheaf on $Y$ then $f^*\mathcal{F}$ is also torsion-free.
\end{lemma}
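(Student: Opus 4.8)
The plan is to reduce the statement to a question about modules over local rings and then exhibit $f^{*}\mathcal{F}$, locally, as a subsheaf of a free sheaf on the integral scheme $X$. A coherent sheaf on an integral scheme is torsion-free exactly when all of its stalks are torsion-free modules over the corresponding local rings, and torsion-freeness of $f^{*}\mathcal{F}$ at a point involves only the local rings of $X$ and $Y$ at that point and its image; so the assertion is local on $X$, and I may assume $X=\operatorname{Spec}B$ and $Y=\operatorname{Spec}A$ are affine with $A$ and $B$ integral domains (coordinate rings of affine opens of the varieties), $A\hookrightarrow B$ the module-finite ring map induced by $f$, and $\mathcal{F}=\widetilde{M}$ for a finitely generated torsion-free $A$-module $M$.

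The first real step is to embed $M$ into a finite free module. Since $A$ is a Noetherian domain with fraction field $K$, torsion-freeness gives an injection $M\hookrightarrow M\otimes_{A}K\cong K^{\oplus r}$, and clearing denominators of a finite generating set of $M$ realizes $M$ as an $A$-submodule of $A^{\oplus r}$. Applying $f^{*}=-\otimes_{A}B$ yields a homomorphism $M\otimes_{A}B\to B^{\oplus r}$; once we know it is injective, we are done, because a submodule of a free module over the domain $B$ is torsion-free, hence so is $M\otimes_{A}B=\Gamma(X,f^{*}\mathcal{F})$, and the same holds on every affine piece, giving the lemma.

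The one point requiring care, and what I expect to be the main obstacle, is precisely the injectivity of $M\otimes_{A}B\to B^{\oplus r}$: pullback is only right exact in general, and a torsion-free sheaf can genuinely acquire torsion when pulled back along a non-flat finite morphism — the torsion of $f^{*}\mathcal{F}$ being exactly the image of $\operatorname{Tor}_{1}^{A}(A^{\oplus r}/M,\,B)$. Thus the ingredient needed beyond finiteness is flatness of $f$, which makes $-\otimes_{A}B$ exact and kills that $\operatorname{Tor}$ term; for the finite morphisms arising in this paper flatness is automatic (they occur as base changes of isogenies of abelian varieties, and flatness is preserved under base change), so granting it the argument is immediate. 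Alternatively one can simply check directly that the relevant $\operatorname{Tor}_{1}$ sheaf vanishes in the cases of interest, but invoking flatness is the cleanest route.
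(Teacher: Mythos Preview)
Your approach differs from the paper's, and in fact you have correctly put your finger on a real gap. The paper argues via Eisenbud's Lemma~6.4: with $\{s_i\}$ a finite set of $R$-module generators of $S$, one supposes $\sum m_i\otimes s_i\ne 0$ and $\sum m_i\otimes s_is'=0$ for some $0\ne s'\in S$, invokes Eisenbud's criterion to produce $m'_j\in M$ and $a_{ij}\in R$ with $m_i=\sum_j a_{ij}m'_j$ and $\sum_i a_{ij}s_is'=0$, cancels $s'$ in the domain $S$, and runs the criterion backwards to force $\sum m_i\otimes s_i=0$. Notice that this argument nowhere uses that $M$ is torsion-free, which is already suspicious; and indeed the lemma as stated is false. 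For $f\colon\mathbb{P}^1\to Y$ the normalization of a projective cuspidal cubic $Y$ and $\mathcal{F}$ the ideal sheaf of the cusp, $\mathcal{F}$ is torsion-free on $Y$ but $f^{*}\mathcal{F}$ has torsion: locally this is $R=k[t^2,t^3]\subset S=k[t]$ with $M=\mathfrak m=(t^2,t^3)$, and the element $t^3\otimes 1-t^2\otimes t\in M\otimes_RS$ is nonzero yet killed by $t^2$. The step that breaks in the paper's argument is the appeal to Eisenbud's criterion for the tensor $\sum m_i\otimes(s_is')$: the elements $s_is'$ need not generate $S$, and without that generating hypothesis (equivalently, without flatness of $S$ over $R$) the criterion in the form used does not apply.

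Your route---embed $M\hookrightarrow A^{\oplus r}$ and identify the $B$-torsion of $M\otimes_A B$ with the image of $\mathrm{Tor}_1^A(A^{\oplus r}/M,B)$, which vanishes as soon as $B$ is flat over $A$---is the honest argument, and your diagnosis that flatness is the missing hypothesis is exactly right. In the paper's sole use of the lemma (the proof of Lemma~\ref{7}) the relevant finite morphism is the base change of the isogeny $\phi_L\colon\hat A\to A$ along the inclusion $a(X)\hookrightarrow A$, hence flat; so your argument goes through there and nothing downstream is affected.
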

\begin{proof}
We may assume that $X$ and $Y$ are affine and $f$ is induced by a ring homomorphism $\psi: R\to S$ where $X={\rm Spec}(S)$, $Y={\rm Spec}(R)$.

Now the claim becomes that if $M$ is a torsion-free $R$-module then $M\otimes_R S$ is a torsion-free $S$-module. Since $f$ is finite we can assume that $S$ is generated by $\{s_i\}$ as an $R$-module. If the claim is not true then there exists $0\ne \sum_i m_i\otimes s_i\in M\otimes_RS$ and $0\ne s'\in S$ such that
$$s'(\sum_i m_i\otimes s_i)=\sum_i m_i\otimes s_is'=0.$$
By \cite[Lemma 6.4]{Eisenbud99} there exist $m_j'\in M$ and $a_{ij}\in R$ such that 
$$\sum_{j}a_{ij}m_j'=m_i, \forall i$$
and
$$\sum_{i}a_{ij}s_is'=0, \forall j$$
Since $s'\ne 0$ and $S$ is integral by assumption it follows that $\sum_{i}a_{ij}s_i=0$. By \cite[Lemma 6.4]{Eisenbud99} again we get $\sum_i m_i\otimes s_i=0$ which is a contradiction.
\end{proof}

\section{Generic vanishing for surfaces} \label{s3}
\begin{theorem}\label{1}
Let $X$ be a smooth projective surface over an algebraically closed field $k$ of positive characteristic, $A$ an abelian variety and $a:X\to A$ a generically finite morphism. If $X$ lifts to $W_2(k)$ then $H^i(X,\Omega ^j_X\otimes P \otimes a^*\widehat{L}^{\vee} )=0$ for any $i+j\geq 3$, $P\in {\rm Pic}^0(X)$ and ample line bundle $L$ on $\hat{A}$. In particular for any $k>0$ and $P\in {\rm Pic}^0(X)$, $H^k(A,a_*(\omega_X\otimes P)\otimes Q)=0$ for general $Q\in {\rm Pic}^0(A)$.
\end{theorem}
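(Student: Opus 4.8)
The ``in particular'' clause is a formal consequence of the main vanishing. By the projection formula, together with the identity $Ra_*(\omega_X\otimes P)=a_*(\omega_X\otimes P)$ --- which holds because $R^1a_*(\omega_X\otimes P)=0$ by Grauert--Riemenschneider (Theorem~\ref{p1}, with $A$ normal and $a$ generically finite) and $R^{\geq 2}a_*=0$ since the fibres of $a$ have dimension $\leq 1$ --- one gets $H^k(A,a_*(\omega_X\otimes P)\otimes\widehat{L}^{\vee})=H^k(X,\omega_X\otimes P\otimes a^*\widehat{L}^{\vee})$, which vanishes for $k>0$ and $L$ sufficiently ample by the main statement; hence $a_*(\omega_X\otimes P)$ is $GV_0$ by Theorem~\ref{p5}, so $\mathrm{codim}\,V^k(a_*(\omega_X\otimes P))\geq k\geq 1$ for $k>0$ and a general $Q\in{\rm Pic}^0(A)$ avoids $V^k$. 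It remains to prove the displayed vanishing, and since $\dim X=2$ the relevant pairs are $(i,j)\in\{(2,1),(1,2),(2,2)\}$; applying Serre duality and $(\Omega^j_X)^{\vee}\otimes\omega_X\cong\Omega^{2-j}_X$, these become $H^{i'}(X,\Omega^{j'}_X\otimes P^{\vee}\otimes a^*\widehat{L})=0$ for $i'+j'\leq 1$.

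The engine is the Deligne--Illusie theorem: since $X$ lifts to $W_2(k)$, Theorem~\ref{p12} gives a splitting $F'_*\Omega^{\cdot}_X\cong\bigoplus_i\Omega^i_{X'}[-i]$ in $D(X')$ (for $p=2$ one uses the truncation $\tau_{<2}$, which still carries the degrees that matter). Tensoring this splitting with the pull-back of $P^{\vee}\otimes a^*\widehat{L}$ to $X'\cong X$ and taking cohomology yields, for every $m$,
\[
\mathbb{H}^m\!\bigl(X,\Omega^{\cdot}_X\otimes F_X^*(P^{\vee}\otimes a^*\widehat{L})\bigr)\;\cong\;\bigoplus_i H^{m-i}\!\bigl(X,\Omega^i_X\otimes P^{\vee}\otimes a^*\widehat{L}\bigr),
\]
so it suffices to make the twisted de Rham hypercohomology on the left vanish for $m\leq 1$; by its Hodge--de Rham spectral sequence it is enough to prove $H^0(X,\mathcal{G})=H^1(X,\mathcal{G})=H^0(X,\Omega^1_X\otimes\mathcal{G})=0$ for $\mathcal{G}=F_X^*(P^{\vee}\otimes a^*\widehat{L})=(P^{\vee})^{\otimes p}\otimes a^*F_A^*\widehat{L}$ --- the pull-back along the generically finite $a$ of an anti-ample bundle. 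This is where the pull-back structure is exploited: by Proposition~\ref{p13}, pulling $\mathcal{G}$ back along the isogeny $\phi_L\colon\hat{A}\to A$ --- equivalently, passing to the fibre product $\hat{X}=X\times_A\hat{A}$ with $\pi\colon\hat{X}\to X$, $\hat{a}\colon\hat{X}\to\hat{A}$, and, if needed, a resolution of $\hat{X}$, noting $\pi^*\Omega^j_X$ stays locally free --- turns $a^*F_A^*\widehat{L}$ into a direct sum of copies of the anti-nef-and-big line bundle $\hat{a}^*(L^{\otimes p})^{\vee}$.

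On $\hat{X}$ the three groups are dispatched as follows. The $H^0$ of $\pi^*\mathcal{G}$ vanishes because a nonzero section would realise a nef-and-big line bundle as anti-effective, which is impossible; the $H^0$ of its $\Omega^1$-twist vanishes because a nonzero section would exhibit a nef-and-big line subsheaf of $\Omega^1_{\hat{X}}$ (or of the resolution), contradicting a Bogomolov--Sommese-type bound valid here thanks again to $W_2(k)$-liftability; and the $H^1$ of $\pi^*\mathcal{G}$ --- which, by Serre duality, is an $H^1$ of an $\omega$-twist of a nef-and-big pull-back --- is the point where Grauert--Riemenschneider (Theorem~\ref{p1}) has to replace the Kodaira/Kawamata--Viehweg vanishing unavailable in characteristic $p$: it kills the $R^1$ of the relevant push-forward, and the cohomology that remains on the abelian variety is then controlled through the Fourier--Mukai transform (Propositions~\ref{p7} and~\ref{p5}), using that $(-)\otimes\widehat{L}^{\vee}$ followed by cohomology is, up to a shift, the functor $R\hat{S}$, so the desired vanishing becomes a concentration-in-one-degree statement for $R\hat{S}$ of a Grothendieck dual. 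Tracing the whole argument back down $\pi$ (via $\pi_*\mathcal{O}_{\hat{X}}$, which is built from degree-$0$ line bundles on $A$ and contains $\mathcal{O}_A$) then gives the vanishing on $X$ itself, for every ample $L$.

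The main obstacle is precisely that last $H^1$ term: the twisting bundle $a^*\widehat{L}^{\vee}$ is only nef and big, being a pull-back along a generically finite --- not finite --- morphism, so Kodaira-type vanishing genuinely fails in characteristic $p$, Grauert--Riemenschneider alone removes only $R^1a_*$, and one must push the argument through the Fourier--Mukai machinery to handle the residual cohomology on the abelian variety. Secondary difficulties are the possible singularity and reducibility of the cover $\hat{X}$ in positive characteristic --- forcing one to resolve and keep track of the discrepancy between the canonical bundle of the resolution and the pull-back of $\omega_X$ --- and establishing the Bogomolov--Sommese-type input in characteristic $p$; both of these again rely on the hypothesis that $X$ lift to $W_2(k)$.
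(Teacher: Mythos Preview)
Your reduction of the ``in particular'' clause via Grauert--Riemenschneider and Theorem~\ref{p5} is correct, and so is the Serre-duality step reducing the main vanishing to $H^{i'}(X,\Omega^{j'}_X\otimes P^{\vee}\otimes a^*\widehat{L})=0$ for $i'+j'\le 1$. The genuine gap is in how you use Deligne--Illusie. You apply the splitting \emph{once}, which reduces the problem to the three vanishings $H^0(\mathcal{G})=H^1(\mathcal{G})=H^0(\Omega^1_X\otimes\mathcal{G})=0$ with $\mathcal{G}=F_X^*(P^{\vee}\otimes a^*\widehat{L})$, i.e.\ with a \emph{single} Frobenius twist. The $H^1$ term then becomes, after Serre duality and Grauert--Riemenschneider, $H^1(\hat{A},\phi_L^*(a_*\omega_X)\otimes L^{\otimes p}\otimes Q)$, and there is no reason for this to vanish: $L^{\otimes p}$ is ample but not sufficiently ample relative to the coherent sheaf $\phi_L^*(a_*\omega_X)$, and Kodaira-type vanishing is exactly what is unavailable in characteristic $p$. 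Your appeal to the Fourier--Mukai transform at this point is circular: concentration of $R\hat{S}(D_A(a_*\omega_X))$ in one degree is \emph{equivalent} to $a_*\omega_X$ being $GV_0$, which is what you are trying to prove, and in any case would only deliver vanishing for \emph{sufficiently} ample $L$, not for every ample $L$ as stated.

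The paper's proof fixes this by reversing the logic: one first proves the three vanishings for $F^{e,*}(P^{\vee}\otimes a^*\widehat{L})$ with $e\gg 0$ directly, using Fujita's vanishing theorem (which is characteristic-free) after pulling back along $\phi_L$ so that $\widehat{L}^{\vee}$ becomes copies of $L$ (this is Lemma~\ref{2} and Lemma~\ref{3}). Only then is the Deligne--Illusie splitting invoked, and it is used \emph{iteratively} as a descent device: vanishing for the $e$-th Frobenius twist implies vanishing for the $(e-1)$-st, so a descending induction brings you from $e\gg 0$ down to $e=0$. Your $H^0(\Omega^1\otimes\mathcal{G})$ step has the same defect --- Bogomolov--Sommese on a resolution of $\hat{X}$ would need liftability of that resolution to $W_2(k)$, which is not given --- whereas in the paper this term is again handled at level $e\gg 0$ by a direct argument (an injection of a high power of $L$ into a fixed torsion-free sheaf forces $h^0$ to blow up, which is absurd; see Lemma~\ref{3}). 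In short: the missing idea is the descending induction on the Frobenius exponent, which allows one to start from Fujita vanishing rather than from Kodaira or Bogomolov--Sommese.
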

\begin{remark}\label{codimV1}
In particular in Theorem \ref{1} if we let $P=\mathcal{O}_X$ we get $H^k(A, a_*\omega_X\otimes Q)=0$ for $k\ge 1$ and general $Q\in {\rm Pic}^0(A)$. By Theorem \ref{p5} this is equivalent to $H^k(X, \omega_X\otimes a^*Q)=0$, in particular $V^1(\omega_X)\ne{\rm Pic}^0(X)$. Then by applying the semicontinuity theorem (cf. \cite[Ch. III Theorem 12.8]{Hartshorne77}) with $f:X\times {\rm Pic}^0(X)\to {\rm Pic}^0(X)$ and $\mathscr{F}=K_{X\times {\rm Pic}^0(X)}\otimes\mathcal{P}$, where $\mathcal{P}$ is the Poincar\'{e} line bundle on $X\times {\rm Pic}^0(X)$, we get that $V^1(\omega_X)$ is a proper closed subset of ${\rm Pic}^0(X)$. This means that ${\rm codim}_{{\rm Pic}^0(X)}V^1(\omega_X)\ge 1$. Moreover by Serre duality $H^2(X,\omega_X\otimes P)=H^0(X,P^{\vee})^{\vee}$ and it is nonzero iff $P=\mathcal{O}_X$, so we also have ${\rm codim}_{{\rm Pic}^0(X)}V^2(\omega_X)\ge 2$. Therefore we see that Theorem \ref{1} actually implies that $\omega_X$ is $GV_0$.
\end{remark}

The proof of Theorem \ref{1} is in the spirit of the article of Deligne and Illusie \cite{DI87}. Let $\hat{A}$ be the dual Abelian variety of $A$, $L$ be an ample line bundle on $\hat{A}$, $\phi_{L}:\hat{A}\to A$ be the isogeny induced by $L$. We use $F$ to denote the $k$-linear Frobenius morphism. To prove the theorem we need the following lemmas.
\begin{lemma}\label{2}
Suppose $\mathcal{F}$ is a coherent sheaf on $A$. Then there exists an $e_0$ such that for any $e\ge e_0$, $H^i(A,\mathcal{F}\otimes F^{e,*}(\widehat{L}^{\vee})\otimes Q)=0$ for any $i>0$ and $Q\in {\rm Pic}^0(A)$.
\end{lemma}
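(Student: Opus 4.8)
The guiding principle is that pulling back by $F^e$ multiplies positivity by a factor $p^e$, so the vanishing should follow from a uniform form of Serre's theorem, the twist by $Q\in{\rm Pic}^0(A)$ being harmless since $Q$ is numerically trivial. First, applying Proposition~\ref{p13} to the ample line bundle $L$ on $\hat A$ gives $\phi_L^*\widehat L^\vee\cong\bigoplus^{h}L$ with $h=h^0(\hat A,L)$; this is ample on $\hat A$, and since $\phi_L\colon\hat A\to A$ is finite surjective and ampleness descends along finite surjective morphisms, $\widehat L^\vee$ is an ample vector bundle on $A$. Next, by functoriality of the $k$-linear Frobenius with respect to $\phi_L$ (using the identification $A^{(p^e)}\cong A$ over the perfect field $k$) one has $\phi_L^*(F^{e,*}\widehat L^\vee)\cong\bigoplus^{h}F^{e,*}L$, and since $F^*$ acts by multiplication by $p$ on Néron--Severi groups of abelian varieties, $F^{e,*}L$ is a line bundle numerically equivalent to $L^{\otimes p^e}$.

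The argument then has two steps. (i) \emph{Renormalize to a bounded family.} Since $\phi_L^*\colon{\rm NS}(A)\to{\rm NS}(\hat A)$ is injective with finite cokernel, choose line bundles $N_e$ on $A$ with $\phi_L^*[N_e]$ equal to $-p^e[L]$ up to a class lying in a fixed finite set; then each summand of $\phi_L^*(F^{e,*}\widehat L^\vee\otimes N_e)$ has numerical class in that finite set, so $\{\phi_L^*(F^{e,*}\widehat L^\vee\otimes N_e\otimes Q)\}_{e,Q}$ is a bounded family of direct sums of line bundles on $\hat A$ (finitely many Néron--Severi classes, proper ${\rm Pic}^0$). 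As $F^{e,*}\widehat L^\vee\otimes N_e$ injects (by faithful flatness of $\phi_L$) into $\phi_{L,*}$ of its own pullback and has bounded Hilbert polynomial on $A$, Grothendieck's boundedness for subsheaves shows $\mathcal B:=\{F^{e,*}\widehat L^\vee\otimes N_e\otimes Q\}_{e,Q}$ is a bounded family of rank-$h$ vector bundles on $A$. At the same time $M_e:=N_e^\vee$ satisfies: $\phi_L^*[M_e]$ is $p^e[L]$ up to bounded error, hence for $e\gg 0$ the line bundle $M_e$ is ample, and $M_e\otimes H^{\otimes(-r)}$ is ample for any fixed $r$ and any fixed ample $H$ on $A$ once $e\gg 0$. (ii) \emph{Apply Serre/Fujita vanishing in a family.} Spread $\mathcal B$ over a finite-type base $T$ as a coherent sheaf $\mathcal V$ flat over $T$, and apply Fujita's vanishing theorem to $q\colon A\times T\to T$ with the $q$-ample line bundle ${\rm pr}_A^*H$ and the sheaf ${\rm pr}_A^*\mathcal F\otimes\mathcal V$: there is an $r$ so that $H^i(A,\mathcal F\otimes H^{\otimes r}\otimes\mathcal B_t\otimes N)=0$ for all $i>0$, all $t\in T$, and all nef line bundles $N$ on $A$. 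Writing $\mathcal F\otimes F^{e,*}\widehat L^\vee\otimes Q=(\mathcal F\otimes H^{\otimes r})\otimes(M_e\otimes H^{\otimes(-r)})\otimes\mathcal B_t$ with $M_e\otimes H^{\otimes(-r)}$ ample, hence nef, for $e\gg 0$, this yields the desired vanishing, uniformly in $Q$.

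When $\deg\phi_L=h^2$ is prime to $p$ one can bypass the boundedness entirely: $\phi_L$ is then \'etale, so $\mathcal F\otimes F^{e,*}\widehat L^\vee\otimes Q$ is a direct summand of $\phi_{L,*}\bigl(\phi_L^*\mathcal F\otimes(\bigoplus^{h}F^{e,*}L)\otimes\phi_L^*Q\bigr)$, and since each $F^{e,*}L=L^{\otimes p^e}\otimes(\text{an element of }{\rm Pic}^0(\hat A))$ one applies Fujita's vanishing theorem on $\hat A$ directly, it being uniform over the nef ${\rm Pic}^0$-twists. I expect the only real obstacle to be technical rather than conceptual: making $F^{e,*}$ precise (the $k$-linear Frobenius together with the identification $A^{(p^e)}\cong A$), establishing its compatibility with $\phi_L$ and the ``multiplication by $p$ on ${\rm NS}$'' statement, and citing the correct boundedness and relative-vanishing results in characteristic $p$; once ``$F^{e,*}$ amplifies positivity by $p^e$'' is in hand, the lemma is essentially Serre vanishing with the $Q$-uniformity supplied by Fujita's theorem.
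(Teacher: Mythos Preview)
Your starting point coincides with the paper's: via Proposition~\ref{p13} one has $\phi_L^*(F^{e,*}\widehat L^{\vee})\cong\bigoplus^{h^0(L)}L^{p^e}$, and Fujita's vanishing theorem applied to the fixed sheaf $\phi_L^*\mathcal F$ and the ample $L$ on $\hat A$ gives $H^i\bigl(\hat A,\phi_L^*(\mathcal F\otimes F^{e,*}\widehat L^{\vee})\otimes Q'\bigr)=0$ for all $i>0$, $e\gg 0$, and $Q'\in\mathrm{Pic}^0(\hat A)$, the uniformity in $Q'$ coming for free since $Q'$ is nef. The divergence is in the descent from $\hat A$ to $A$. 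You renormalize by $N_e$, argue boundedness of the resulting family on $A$, and invoke a relative Fujita. The paper instead uses a three-line Fourier--Mukai argument: the vanishing on $\hat A$ for all $Q'$ says precisely, by cohomology and base change, that
\[
RS\bigl(\phi_L^*(\mathcal F\otimes F^{e,*}\widehat L^{\vee})\bigr)\;=\;\widehat{\phi_L}_*\,R\hat S\bigl(\mathcal F\otimes F^{e,*}\widehat L^{\vee}\bigr)
\]
(the identity is Lemma~\ref{p11}) is a sheaf concentrated in degree $0$. Since $\widehat{\phi_L}$ is finite, its pushforward is exact and conservative, so $R\hat S(\mathcal F\otimes F^{e,*}\widehat L^{\vee})$ is itself a sheaf in degree $0$, and cohomology and base change in the other direction yields the vanishing on $A$ for every $Q\in\mathrm{Pic}^0(A)$. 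No boundedness, no families, no hypothesis on $\deg\phi_L$.

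Your boundedness step, as written, has a gap: you invoke Grothendieck's boundedness for subsheaves, but the sheaf $\phi_{L,*}\phi_L^*(F^{e,*}\widehat L^{\vee}\otimes N_e)$ into which you embed varies with $e$, so the standard statement (subsheaves of a \emph{fixed} sheaf with bounded Hilbert polynomial are bounded) does not apply. What you actually need is that subsheaves with bounded Hilbert polynomial of a \emph{bounded family} are bounded, which is true via relative Quot schemes but is not what you cited. Your \'etale alternative at the end is much closer to the paper's idea---it is essentially the direct-summand version of the descent---but the Fourier--Mukai identity above works regardless of whether $\deg\phi_L$ is prime to $p$, so the restriction is unnecessary.
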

\begin{proof}
By Fujita's vanishing theorem (cf. \cite[Theorem 1.4.35]{Lazarsfeld04}) and Proposition \ref{p13} there exists an $e_0$ such that for any $e\ge e_0$,
\begin{align*}
0&=H^i(\hat{A},\phi_{L}^*(\mathcal{F})\otimes(\bigoplus^{h^0(\hat{A},L)}L^{p^e})\otimes Q) \\
&=H^i(\hat{A},\phi_{L}^*(\mathcal{F})\otimes F^{e,*}\phi_{L}^*(\widehat{L}^{\vee})\otimes Q) \\
&=H^i(\hat{A},\phi_{L}^*(\mathcal{F})\otimes\phi_{L}^*F^{e,*}(\widehat{L}^{\vee})\otimes Q) \\
&=H^i(\hat{A},\phi_{L}^*(\mathcal{F}\otimes F^{e,*}(\widehat{L}^{\vee}))\otimes Q). 
\end{align*}
for any $i>0$ and $Q\in {\rm Pic}^0(\hat{A})$.  Then by cohomology and base change and Lemma \ref{p11} we know that 
\begin{align}
RS(\phi_{L}^*(\mathcal{F}\otimes F^{e,*}(\widehat{L}^{\vee})))=\widehat{\phi_{L}}_*R\hat{S}(\mathcal{F}\otimes F^{e,*}(\widehat{L}^{\vee})) \label{6}
\end{align}
is a sheaf in degree $0$. Moreover since $\widehat{\phi_{L}}$ is finite then $R\hat{S}(\mathcal{F}\otimes F^{e,*}(\widehat{L}^{\vee}))$ is also a sheaf in degree $0$. By cohomology and base change this implies that $H^i(A,\mathcal{F}\otimes F^{e,*}(\widehat{L}^{\vee})\otimes Q)=0$ for any $i>0$ and $Q\in {\rm Pic}^0(A)$.
\end{proof}
\begin{lemma}\label{7}
Let $P$ be a line bundle in ${\rm Pic}^0(X)$. We denote the support of $\phi_{L}^*(a_*(\Omega_X^1\otimes P))$ by $Z$. Then $\phi_{L}^*(a_*(\Omega_X^1\otimes P))$ is a torsion-free sheaf on $Z$.
\end{lemma}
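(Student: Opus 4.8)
The plan is to prove the statement in two steps: first show that $a_*(\Omega_X^1\otimes P)$ is torsion-free on its support $Y:=a(X)$, and then deduce the claim for $Z=\phi_L^{-1}(Y)$ by pulling back along the finite isogeny $\phi_L$ and invoking Lemma \ref{p15}.

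For the first step, note that $\Omega_X^1\otimes P$ is locally free on the integral variety $X$, hence torsion-free, and that $a\colon X\to Y$ is a surjective proper morphism from an irreducible variety onto the integral surface $Y\subseteq A$ (surjective because $X$ is projective, so $a$ is proper and $Y$ is closed). Let $T\subseteq a_*(\Omega_X^1\otimes P)$ be the torsion subsheaf; since $T$ vanishes at the generic point of the integral surface $Y$, it is supported on a proper closed subset $W\subsetneq Y$. A local section of $T$ over an open $U\subseteq Y$ is a section $s$ of $\Omega_X^1\otimes P$ over $a^{-1}(U)$ vanishing on $a^{-1}(U\setminus W)=a^{-1}(U)\setminus a^{-1}(W)$. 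Since $a$ is surjective onto $Y$ and $W\cap U$ is a proper closed subset of $U$ (here we use that $Y$, hence $U$, is irreducible), $a^{-1}(W)$ is a proper closed subset of the irreducible open $a^{-1}(U)$, so $a^{-1}(U)\setminus a^{-1}(W)$ is dense; as $\Omega_X^1\otimes P$ is torsion-free, $s=0$. Thus $T=0$.

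For the second step, $\phi_L\colon\hat A\to A$ is the isogeny attached to the ample line bundle $L$, hence a finite morphism, and so is its restriction $\phi_L\colon\phi_L^{-1}(Y)\to Y$. Because pullback commutes with formation of support, $Z=\mathrm{Supp}\big(\phi_L^{*}(a_*(\Omega_X^1\otimes P))\big)=\phi_L^{-1}(Y)$, and $\phi_L^{*}(a_*(\Omega_X^1\otimes P))$ is the pullback along the finite map $\phi_L|_Z$ of the torsion-free sheaf $a_*(\Omega_X^1\otimes P)$ on $Y$; Lemma \ref{p15}, applied on each irreducible component of $Z$, then shows it is torsion-free on $Z$.

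The main thing to be careful about is the first step: since $a$ is only \emph{generically} finite, $Y=a(X)$ can be singular and $a$ can contract curves, so the naive ``the pushforward splits according to the fibres'' fails; the argument above avoids this by arguing with sections, using only surjectivity of $a$, irreducibility of $X$ and $Y$, and local freeness of $\Omega_X^1\otimes P$. One could alternatively pass to the fibre product $\hat X=X\times_A\hat A$ (with its maps $\varphi\colon\hat X\to X$, $\hat a\colon\hat X\to\hat A$) by flat base change along $\phi_L$, but then one would have to worry whether $\hat X$ is reduced when $\phi_L$ is inseparable, so the direct route via Lemma \ref{p15} seems preferable; the same inseparability is why Lemma \ref{p15} is invoked componentwise on the reduced support $Z$.
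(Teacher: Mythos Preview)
Your proof is correct and follows essentially the same two-step strategy as the paper: first observe that $\Omega_X^1\otimes P$ is torsion-free and hence so is its pushforward $a_*(\Omega_X^1\otimes P)$ on $a(X)$, then invoke Lemma~\ref{p15} and finiteness of $\phi_L$ to conclude. The only difference is that where the paper simply asserts ``any push-forward of a torsion-free sheaf is torsion-free on the image'', you spell this out via the section argument, and you are explicit about applying Lemma~\ref{p15} componentwise on the reduced $Z$---a caution the paper leaves implicit but which matches exactly how the lemma is later used in Lemma~\ref{3}, where one restricts to an irreducible component $Z_0$.
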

\begin{proof}
$\Omega_X^1\otimes P$ is torsion-free by definition, and $a_*(\Omega_X^1\otimes P)$ is torsion-free on $a(X)$ because any push-forward of a torsion-free sheaf is torsion-free on the image. Then $\phi_{L}^*(a_*(\Omega_X^1\otimes P))$ is torsion-free on $Z$ by Lemma \ref{p15} and finiteness of $\phi_L$. 
\end{proof} 
\begin{lemma}\label{3}
There exists $e_0$ such that for any $e\ge e_0$, $i>0$ and $P\in {\rm Pic}^0(X)$, $H^i(X,\omega_X\otimes P\otimes F^{e,*}a^*\widehat{L}^{\vee})=0$ and $H^2(X,\Omega_X^1\otimes P\otimes F^{e,*}a^*\widehat{L}^{\vee})=0$.
\end{lemma}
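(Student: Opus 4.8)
The plan is to reduce the two desired vanishing statements on $X$ to vanishing statements on the abelian variety $A$, and then invoke Lemma \ref{2}. First I would handle the statement about $\omega_X$. Using the projection formula, $H^i(X,\omega_X\otimes P\otimes F^{e,*}a^*\widehat{L}^\vee) = H^i(A, a_*(\omega_X\otimes P)\otimes a_*F^{e,*}\widehat{L}^\vee)$ — but one must be careful here, since $F^{e,*}a^*\widehat{L}^\vee$ is not literally pulled back from $A$. The cleaner route: let $a^{(e)}\colon X\to A$ denote the composite of $a$ with the $e$-th power of Frobenius on $A$, so that $F^{e,*}a^*\widehat{L}^\vee = a^{(e),*}(F^{e,*}_A\widehat{L}^\vee)$ where $F_A$ is the Frobenius on $A$. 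Then by the projection formula $Ra^{(e)}_*(\omega_X\otimes P\otimes a^{(e),*}(F^{e,*}_A\widehat{L}^\vee)) \cong Ra^{(e)}_*(\omega_X\otimes P)\otimes F^{e,*}_A\widehat{L}^\vee$. Since $a^{(e)}$ is generically finite, Grauert-Riemenschneider (Theorem \ref{p1}) gives $R^{>0}a^{(e)}_*(\omega_X\otimes P)=0$ (applied after a suitable Stein-type factorization to a normal variety; the image of $a^{(e)}$ in $A$ is normal since $A$ is smooth and $a^{(e)}$ has connected fibers after factorization, or one passes through the normalization), so $Ra^{(e)}_*(\omega_X\otimes P) = \mathcal{G}$ is a sheaf. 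Thus $H^i(X,\omega_X\otimes P\otimes F^{e,*}a^*\widehat{L}^\vee) = H^i(A,\mathcal{G}\otimes F^{e,*}_A\widehat{L}^\vee)$, and Lemma \ref{2} (with $\mathcal{F}=\mathcal{G}$, $Q=\mathcal{O}_A$) furnishes an $e_0$ making this vanish for $e\ge e_0$ and all $i>0$. Since there are only finitely many components of ${\rm Pic}^0(X)$ to worry about — or rather, running the semicontinuity/base-change argument over all of ${\rm Pic}^0(X)$ at once with the Poincaré bundle — a single $e_0$ works uniformly in $P$; this uniformity is one point that needs a careful statement of Lemma \ref{2} over a parameter space, but it is not serious.

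For the second statement, $H^2(X,\Omega_X^1\otimes P\otimes F^{e,*}a^*\widehat{L}^\vee)=0$, I would argue similarly but now the higher direct image $R^1a^{(e)}_*(\Omega_X^1\otimes P)$ need not vanish, so instead I use the Leray spectral sequence. Since $\dim A\ge\dim X=2$ and $a^{(e)}$ is generically finite, the fibers of $a^{(e)}$ are at most $1$-dimensional (indeed $0$-dimensional on a dense open set), so $R^{\ge 2}a^{(e)}_* = 0$ on the relevant sheaves. The Leray spectral sequence then gives an exact sequence whose $H^2$ term is built from $H^2(A, a^{(e)}_*(\Omega_X^1\otimes P)\otimes F^{e,*}_A\widehat{L}^\vee)$ and $H^1(A, R^1a^{(e)}_*(\Omega_X^1\otimes P)\otimes F^{e,*}_A\widehat{L}^\vee)$. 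The sheaf $R^1a^{(e)}_*(\Omega_X^1\otimes P)$ is supported on the (at most $1$-dimensional) locus where $a^{(e)}$ is not finite, so $H^1$ against a sufficiently twisted sheaf will also be controlled; in fact one can even note its support has dimension $\le 1$ and apply Lemma \ref{2} or a direct vanishing there. Applying Lemma \ref{2} to each of $a^{(e)}_*(\Omega_X^1\otimes P)$ (using Lemma \ref{7}, which tells us this sheaf is torsion-free on its support, though torsion-freeness is more relevant for other lemmas) and to $R^1a^{(e)}_*(\Omega_X^1\otimes P)$, and taking the maximum of the finitely many resulting thresholds (including the $e_0$ from the first part), we obtain a single $e_0$ that works for everything.

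The main obstacle I anticipate is \emph{uniformity in $P\in{\rm Pic}^0(X)$}: Lemma \ref{2} as stated produces an $e_0$ depending on a fixed coherent sheaf $\mathcal{F}$, but here $\mathcal{F} = \mathcal{G}_P$ varies with $P$. The fix is to work in families: form the relative version over $X\times{\rm Pic}^0(X)$ with the Poincaré bundle, push forward to $A\times{\rm Pic}^0(X)$, and run Fujita vanishing and cohomology-and-base-change there; since ${\rm Pic}^0(X)$ is of finite type this yields a uniform $e_0$. A secondary technical point is justifying $R^{>0}a^{(e)}_*(\omega_X\otimes P)=0$ when $a^{(e)}$ is generically finite but not finite: this is exactly the content of the generalized Grauert-Riemenschneider statement in Theorem \ref{p1}, applied to the Stein factorization $X\to Y\to A$ with $Y$ normal (the morphism $X\to Y$ is then projective, generically finite, $Y$ normal, $X$ a smooth surface, so $R^1(X\to Y)_*(\omega_X\otimes P)=0$), together with the finiteness of $Y\to A$ which kills the remaining higher direct images. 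Everything else is a routine application of the projection formula, the Leray spectral sequence, and Lemma \ref{2}.
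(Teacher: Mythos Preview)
Your approach to the first vanishing ($H^i(X,\omega_X\otimes P\otimes F^{e,*}a^*\widehat{L}^{\vee})=0$) is essentially the paper's, though you take an unnecessary detour through $a^{(e)}$: since the absolute Frobenius commutes with any morphism, one has $F^{e,*}a^*\widehat{L}^{\vee}=a^*F^{e,*}\widehat{L}^{\vee}$ and can work directly with $a$ and Theorem~\ref{p1}, exactly as the paper does.

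For the second vanishing your route is genuinely different and in fact shorter. The paper passes through Serre duality to reduce to an $H^0$ vanishing, then uses torsion-freeness of $\phi_L^*a_*(\Omega_X^1\otimes P^\vee)$ (Lemma~\ref{7}) together with Lemma~\ref{p2} to bound $h^0$ against the growing sections of $L^{p^e}$, followed by a Fourier--Mukai argument and a constancy-in-$Q$ argument (which itself uses Lemma~\ref{2} and the Leray spectral sequence). You instead apply Leray for $a$ directly: $R^2a_*=0$ since $X$ is a surface, $R^1a_*(\Omega_X^1\otimes P)$ is supported on finitely many points (so its $H^1$ vanishes outright, not merely after invoking Lemma~\ref{2}), and $H^2(A,a_*(\Omega_X^1\otimes P)\otimes F^{e,*}\widehat{L}^{\vee})=0$ by Lemma~\ref{2}. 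This is correct and avoids the torsion-freeness and Fourier--Mukai machinery entirely; indeed the paper's own proof contains these very ingredients in its final paragraph (where it shows the Leray $E_2^{ij}$ vanish for $i>0$), but uses them only indirectly. Your observation that Lemma~\ref{7} is ``more relevant for other lemmas'' is apt: in your argument it is not needed at all.

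On uniformity in $P$: you are right to flag it, and your family-over-${\rm Pic}^0(X)$ fix via Fujita vanishing on $A\times{\rm Pic}^0(X)$ is the correct way to make the bound uniform. The paper's proof has the same implicit dependence of $e_0$ on $P$ and does not spell this out, so on this point your write-up is actually more careful.
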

\begin{proof}
First we prove that $H^i(X,\omega_X\otimes P\otimes F^{e,*}a^*\widehat{L}^{\vee})=0$. By Theorem \ref{p1} we have $R^ia_*(\omega_X\otimes P)=0$ for any $i>0$. Hence we have 
$$H^i(X,\omega_X\otimes P\otimes F^{e,*}a^*\widehat{L}^{\vee})=H^i(X,\omega_X\otimes P\otimes a^*F^{e,*}\widehat{L}^{\vee})=H^i(A,a_*(\omega_X\otimes P)\otimes F^{e,*}\widehat{L}^{\vee})$$ 
where the first equality is by the commutativity of $a$ and $F$ and the second equality is by the Projection Formula and degeneration of a Leray spectral sequence (cf. \cite[Exercise III.8.1]{Hartshorne77}). The claim then follows from  Lemma \ref{2} after we replace $\mathcal{F}$ and $Q$ by $a_*(\omega_X\otimes P)$ and $\mathcal{O}_A$ respectively.

Next we will prove that $H^2(X,\Omega_X^1\otimes P\otimes F^{e,*}a^*\widehat{L}^{\vee})=0$ for $e\gg 0$ which by Serre duality is equivalent to $H^0(X,\Omega_X^1\otimes P^{\vee}\otimes F^{e,*}a^*\widehat{L})=0$. We first prove that there exists an $e_0$ such that for any $e\ge e_0$ and any $Q\in {\rm Pic}^0(\hat{A})$, $H^0(\hat{A},\phi_{L}^*(a_*(\Omega_X^1\otimes P^{\vee}))\otimes(L^{-p^e})\otimes Q)=0$. This is because every nonzero element in $H^0(\hat{A},\phi_{L}^*(a_*(\Omega_X^1\otimes P^{\vee}))\otimes(L^{-p^e})\otimes Q)$ corresponds to a nonzero morphism $L^{p^e}\otimes Q^{\vee}\to \phi_{L}^*(a_*(\Omega_X^1\otimes P^{\vee}))$. We claim that this morphism is injective after we restrict it to an irreducible component $Z_0$ of $Z$. Indeed, $L^{p^e}\otimes Q^{\vee}|_{Z_0}$ is a line bundle which is torsion-free of rank $1$ and $\phi_{L}^*(a_*(\Omega_X^1\otimes P^{\vee}))|_{Z_0}$ is torsion-free by Lemma \ref{7}, so the injectivity follows from Lemma \ref{p2}. Since ${\rm dim}(Z_0)=2$, after taking $H^0(Z_0,\cdot)$ on both sides of the morphism $L^{p^e}\otimes Q^{\vee}|_{Z_0}\to \phi_{L}^*(a_*(\Omega_X^1\otimes P^{\vee}))|_{Z_0}$ we see that for any $Q\in {\rm Pic}^0(X)$, $h^0(Z_0,L^{p^e}\otimes Q^{\vee}|_{Z_0})$ goes to infinity as $e\gg 0$ while $h^0(Z_0,\phi_{L}^*(a_*(\Omega_X^1\otimes P^{\vee})|_{Z_0}))$ is constant and the induced homomorphism is still injective. This is a contradiction which completes the proof that $H^0(\hat{A},\phi_{L}^*(a_*(\Omega_X^1\otimes P^{\vee}))\otimes((-L)^{p^e})\otimes Q)=0$. 

Now by cohomology and base change $H^0(\hat{A},\phi_{L}^*(a_*(\Omega_X^1\otimes P^{\vee}))\otimes((-L)^{p^e})\otimes Q)=0$ implies that $R^0S(\phi_{L}^*(a_*(\Omega_X^1\otimes P^{\vee})\otimes F^{e,*}(\widehat{L})))=0$. By Lemma \ref{p11} we see that 
$$RS(\phi_{L}^*(a_*(\Omega_X^1\otimes P^{\vee})\otimes F^{e,*}(\widehat{L})))=(R\widehat{\phi_{L}}_*\circ R\hat{S})(a_*(\Omega_X^1\otimes P^{\vee})\otimes F^{e,*}(\widehat{L})),$$
and after taking cohomology in degree $0$ we have 
$$0=R^0(\widehat{\phi_{L}}_*\circ\hat{S})(a_*(\Omega_X^1\otimes P^{\vee})\otimes F^{e,*}(\widehat{L})).$$
Since $\widehat{\phi_{L}}$ is finite the following Grothendieck spectral sequence
\begin{align*}
E_2^{ij}&=R^i\widehat{\phi_{L}}_*R^j\hat{S}(a_*(\Omega_X^1\otimes P^{\vee})\otimes F^{e,*}(\widehat{L})) \\
&\Rightarrow R^{i+j}(\widehat{\phi_{L}}_*\circ\hat{S})(a_*(\Omega_X^1\otimes P^{\vee})\otimes F^{e,*}(\widehat{L}))
\end{align*}
degenerates at $E_2$, in particular  
\begin{align*}
0=& R^0(\widehat{\phi_{L}}_*\circ\hat{S})(a_*(\Omega_X^1\otimes P^{\vee})\otimes F^{e,*}(\widehat{L})) \\
=& R^0\widehat{\phi_{L}}_*R^0\hat{S}(a_*(\Omega_X^1\otimes P^{\vee})\otimes F^{e,*}(\widehat{L})) \\
=& \widehat{\phi_{L}}_*R^0\hat{S}(a_*(\Omega_X^1\otimes P^{\vee})\otimes F^{e,*}(\widehat{L})).
\end{align*}
Therefore $R^0\hat{S}(a_*(\Omega_X^1\otimes P^{\vee})\otimes F^{e,*}(\widehat{L}))=0$, and then by cohomology and base change $H^0(A, a_*(\Omega_X^1\otimes P^{\vee})\otimes F^{e,*}\widehat{L}\otimes Q)=0$ for all $Q\in U$ where $U$ is an open set of ${\rm Pic}^0(A)$. Now if we can show that $h^0(X,\Omega_X^1\otimes P^{\vee}\otimes a^*(F^{e,*}\widehat{L}\otimes P))$ is constant with respect to $P$ for $e\gg 0$, then we have 
\begin{align*}
0&=H^0(A,a_*(\Omega_X^1\otimes P^{\vee})\otimes F^{e,*}\widehat{L}\otimes Q_0) \\
&=H^0(X,\Omega_X^1\otimes P^{\vee}\otimes a^*(F^{e,*}\widehat{L}\otimes Q_0)) \\
&=H^0(X,\Omega_X^1\otimes P^{\vee}\otimes a^*(F^{e,*}\widehat{L}\otimes Q)),
\end{align*}
for any $Q_0\in U$ and any $Q\in {\rm Pic}^0(A)$, and after taking $P=\mathcal{O}_A$ we are done.

To prove that $h^0(X,\Omega_X^1\otimes P^{\vee}\otimes a^*(F^{e,*}\widehat{L}\otimes Q))$ is constant with respect to $Q$ it suffices to prove that $h^1(X,\Omega_X^1\otimes P^{\vee}\otimes a^*(F^{e,*}\widehat{L}\otimes Q))$ and $h^2(X,\Omega_X^1\otimes P^{\vee}\otimes a^*(F^{e,*}\widehat{L}\otimes Q))$ are both constant. By Lemma \ref{2} we know $H^i(A,a_*(\Omega_X^1\otimes P^{\vee})\otimes F^{e,*}\widehat{L}^{\vee}\otimes Q^{\vee})=0$ for any $i>0,e\gg 0$ which implies that 
\begin{align*}
& h^0(A,a_*(\Omega_X^1\otimes P)\otimes F^{e,*}\widehat{L}^{\vee}\otimes Q^{\vee}) \\
=& h^0(X,\Omega_X^1\otimes P\otimes a^*(F^{e,*}\widehat{L}^{\vee}\otimes Q^{\vee})) \\
=& h^2(X,\Omega_X^1\otimes P^{\vee}\otimes a^*(F^{e,*}\widehat{L}\otimes Q)) 
\end{align*}
is constant. To prove that 
\begin{align*}
& h^1(X,\Omega_X^1\otimes P^{\vee}\otimes a^*(F^{e,*}\widehat{L}\otimes Q)) \\
=& h^1(X,\Omega_X^1\otimes P\otimes a^*(F^{e,*}(\widehat{L}^{\vee})\otimes Q^{\vee}))
\end{align*} 
is constant we consider the Leray spectral sequence 
\begin{align*}
E_2^{ij}&=H^i(A,R^ja_*((\Omega_X^1\otimes P)\otimes a^*(F^{e,*}(\widehat{L}^{\vee})\otimes Q^{\vee}))) \\
&=H^i(A,R^ja_*(\Omega_X^1\otimes P)\otimes F^{e,*}(\widehat{L}^{\vee})\otimes Q^{\vee}) \\
&\Rightarrow H^{i+j}(X,\Omega_X^1\otimes P\otimes a^*(F^{e,*}(\widehat{L}^{\vee})\otimes Q^{\vee}))
\end{align*}
By Lemma \ref{2} we know $E_2^{ij}=0$ for $i>0$, so the spectral sequence degenerates at $E_2^{ij}$ and in particular  
$$H^1(X,\Omega_X^1\otimes P\otimes a^*(F^{e,*}(\widehat{L}^{\vee})\otimes Q^{\vee}))\cong H^0(A,R^1a_*(\Omega_X^1\otimes P)\otimes F^{e,*}(\widehat{L}^{\vee})\otimes Q^{\vee}).$$ 
Since $a$ is generically finite and $X$ is a surface, $R^1a_*(\Omega_X^1\otimes P)$ is supported on the locus whose preimage with respect to $a$ is $1$-dimensional, that is, a finite number of points. So we know $H^0(X,R^1a_*(\Omega_X^1\otimes P)\otimes F^{e,*}(\widehat{L}^{\vee})\otimes N)$ is constant for any line bundle $N$, in particular for any $Q\in {\rm Pic}^0(X)$.
\end{proof}
\begin{proof}[Proof of Theorem \ref{1}.] The rest of the proof follows \cite[Lemme 2.9]{DI87}. We fix an $e_0$ which satisfies the condition in Lemma \ref{3}, so after replacing $P$ by $F^{e,*}P$, by projection formula and for dimensional reasons we know that $H^j(X,F_{e,*}\Omega_X^i\otimes P\otimes a^*\widehat{L}^{\vee})=0$ for any $i+j\ge 3$, $e\ge e_0$ and $P\in {\rm Pic}^0(X)$. By Serre duality this is equivalent to $H^j(X,F_{e,*}\Omega_X^i\otimes P\otimes a^*\widehat{L})=0$ for any $i+j\le 1$ and $P\in{\rm Pic}^0(X)$. We consider the spectral sequence
$$E_1^{ij}=H^j(X,F_{e,*}\Omega_X^i\otimes P\otimes a^*\widehat{L})\Rightarrow H^{i+j}(X,F_{e,*}\Omega_X^\cdot\otimes P\otimes a^*\widehat{L}).$$
This implies that 
$$0=H^i(X,F_{e,*}\Omega_X^\cdot\otimes P\otimes a^*\widehat{L})=H^i(X,\tau_{<2}F_{e,*}\Omega_X^\cdot\otimes P\otimes a^*\widehat{L})$$
for $i\le 1$. Moreover since $X$ lifts to $W_2(k)$ and ${\rm char}(k)\ge 2>1$, by Theorem \ref{p12} we have in $D(X)$ an isomorphism 
$$\tau_{<2}F_*\Omega_X^{\cdot}\cong \bigoplus_{i<2}\Omega_X^i[-i].$$
Then
$$0=H^i(X,F_{e,*}\Omega_X^\cdot\otimes P\otimes a^*\widehat{L})=\bigoplus_l H^{i-l}(X,F_{e-1,*}\Omega_X^l\otimes P\otimes a^*\widehat{L})$$
for $i\le 1$. By descending induction on $e$ we know that $H^{i-l}(X,\Omega_X^l\otimes P\otimes a^*\widehat{L})=0$ for $i\le 1$. Hence by Serre duality we finally get $H^{j}(X,\Omega_X^i\otimes P\otimes a^*\widehat{L}^{\vee})=0$ for any $i+j\ge 3$ and $i\ge 0$. The second statement follows from Theorem \ref{p1} and Theorem \ref{p5}.
\end{proof}
Theorem \ref{4} and Corollary \ref{5} below describe the relationship between Kawamata-Viehweg vanishing and generic vanishing. 

\begin{theorem}\label{4} 
Let $a:X\to A$ be a generically finite morphism from a smooth projective variety $X$ to an abelian variety $A$, $\hat{A}$ the dual abelian variety of $A$ and $L$ an ample line bundle on $\hat{A}$. For an $m\in\mathbb{Z}^+$ let $\phi_{L^{\otimes m}}:\hat{A}\to A$ be the isogeny induced by $L^{\otimes m}$. Let $\hat{X}_m=X\times_A\hat{A}$ be the fiber product with respect to the morphisms $a$ and $\phi_{L^{\otimes m}}$, and let $\hat{a}_m:\hat{X}_m\to\hat{A}$ and $\varphi_m:\hat{X}_m\to X$ be the induced morphisms. \\
{\rm (a)} If $H^i(\hat{A},\hat{a}_{m,*}\omega_{\hat{X}_m}\otimes L^{\otimes m}\otimes P)=0$ for any $P\in {\rm Pic}^0(\hat{A})$ and $i>0$, then $H^j(A,a_*\omega_X\otimes\widehat{L^{\otimes m}}^{\vee})=0$ for any $j>0$. \\
{\rm (b)} If $H^i(A,a_*\omega_X\otimes\widehat{L^{\otimes m}}^{\vee})=0$ for any $i>0$ and $m\gg 0$, then for any ample line bundle $M$, $H^j(\hat{A},\hat{a}_{n,*}\omega_{\hat{X}_n}\otimes\phi_{L^{\otimes n}}^*(M))=0$ for any $j>0$ and $n\gg 0$.
\end{theorem}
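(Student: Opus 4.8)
The plan is to reduce both parts to statements about the single sheaf $a_*\omega_X$ on $A$ by a base-change identity for dualizing sheaves, and then transfer cohomological vanishing along the isogeny $\phi_{L^{\otimes m}}$. Since $\phi_{L^{\otimes m}}$ is finite and flat, so is $\varphi_m$; because $X$ is smooth and $\omega_{\hat{A}/A}=\mathcal{O}_{\hat{A}}$, the relative dualizing sheaf formula for finite flat morphisms gives $\omega_{\hat{X}_m}\cong\varphi_m^*\omega_X$, and flat base change along the Cartesian square defining $\hat{X}_m$ then yields $\hat{a}_{m,*}\omega_{\hat{X}_m}\cong\phi_{L^{\otimes m}}^*(a_*\omega_X)$. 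Applying Proposition \ref{p13} with the roles of $A$ and $\hat{A}$ interchanged, $\phi_{L^{\otimes m}}^*\widehat{L^{\otimes m}}\cong\bigoplus^{h^0(\hat{A},L^{\otimes m})}L^{-m}$, so for every $Q\in{\rm Pic}^0(A)$ there is an isomorphism
\[
\phi_{L^{\otimes m}}^*\bigl(a_*\omega_X\otimes\widehat{L^{\otimes m}}^{\vee}\otimes Q\bigr)\;\cong\;\bigoplus\nolimits^{h^0(\hat{A},L^{\otimes m})}\bigl(\hat{a}_{m,*}\omega_{\hat{X}_m}\otimes L^{\otimes m}\otimes\phi_{L^{\otimes m}}^*Q\bigr),
\]
and since $\phi_{L^{\otimes m}}^*\colon{\rm Pic}^0(A)\to{\rm Pic}^0(\hat{A})$ is the dual isogeny, hence surjective, $\phi_{L^{\otimes m}}^*Q$ runs over all of ${\rm Pic}^0(\hat{A})$. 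A last ingredient, used repeatedly, is that $\phi_{L^{\otimes m},*}\mathcal{O}_{\hat{A}}$ carries a filtration whose successive quotients are line bundles in ${\rm Pic}^0(A)$ (clear when $\ker\phi_{L^{\otimes m}}$ is \'etale or multiplicative, all quotients being $\mathcal{O}_A$ for an $\alpha_p$-quotient, the general case following by factoring the isogeny).

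\emph{Part (a).} The hypothesis says exactly that the right-hand side of the displayed isomorphism is acyclic in positive degrees for every $Q$; so, with $\mathcal{G}=a_*\omega_X\otimes\widehat{L^{\otimes m}}^{\vee}$, we get $H^i(\hat{A},\phi_{L^{\otimes m}}^*(\mathcal{G}\otimes Q))=0$ for all $i>0$ and $Q\in{\rm Pic}^0(A)$. It remains to descend this to $A$: since $\phi_{L^{\otimes m}}$ is finite, $H^i(\hat{A},\phi_{L^{\otimes m}}^*(\mathcal{G}\otimes Q))=H^i(A,(\mathcal{G}\otimes Q)\otimes\phi_{L^{\otimes m},*}\mathcal{O}_{\hat{A}})$, and a descending induction on cohomological degree — at each step feeding in the vanishing one degree higher for all ${\rm Pic}^0(A)$-twists simultaneously through the long exact sequences attached to the filtration of $\phi_{L^{\otimes m},*}\mathcal{O}_{\hat{A}}$ — yields $H^i(A,\mathcal{G})=0$ for all $i>0$, which is the assertion of (a). This is precisely where the hypothesis must be imposed for every $P$: the induction has no base case with only the untwisted vanishing, and in positive characteristic $\phi_{L^{\otimes m},*}\mathcal{O}_{\hat{A}}$ need not have $\mathcal{O}_A$ as a direct summand.

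\emph{Part (b).} No lifting hypothesis is in force, so $GV_0$ for $a_*\omega_X$ is not available from Theorem \ref{1}; but the hypothesis $H^i(A,a_*\omega_X\otimes\widehat{L^{\otimes m}}^{\vee})=0$ for $i>0$, $m\gg0$, is literally condition (2) of Theorem \ref{p5} for $\mathcal{F}=a_*\omega_X$ (with $L^{\otimes m}$, $m\gg0$, the sufficiently ample line bundles on $\hat{A}$), so $a_*\omega_X$ is $GV_0$. By the standard fact that a $GV_0$ sheaf tensored with an ample line bundle has vanishing higher cohomology after any ${\rm Pic}^0$-twist, $H^i(A,a_*\omega_X\otimes M\otimes P)=0$ for all $i>0$, $P\in{\rm Pic}^0(A)$ and every ample $M$ on $A$. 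Feeding this through the filtration of $\phi_{L^{\otimes n},*}\mathcal{O}_{\hat{A}}$ gives $H^i(A,(a_*\omega_X\otimes M)\otimes\phi_{L^{\otimes n},*}\mathcal{O}_{\hat{A}})=0$, i.e. $H^i(\hat{A},\phi_{L^{\otimes n}}^*(a_*\omega_X\otimes M))=H^i(\hat{A},\hat{a}_{n,*}\omega_{\hat{X}_n}\otimes\phi_{L^{\otimes n}}^*M)=0$ for all $i>0$; this holds for every $n\ge1$, in particular for $n\gg0$.

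\emph{Main obstacle.} The heart of the matter is the descent in part (a). When $\phi_{L^{\otimes m}}$ is separable, $\phi_{L^{\otimes m},*}\mathcal{O}_{\hat{A}}$ is a direct sum of line bundles in ${\rm Pic}^0(A)$ and the descent is immediate; handling the inseparable case is where the real work lies, and it is also what forces the asymmetric ``for all $P$'' hypothesis in (a) and the reliance on the $GV_0$/Fourier-Mukai input (rather than a naive Kodaira-type argument) in (b). A secondary point to pin down is the precise form of the ``$GV_0$ tensored with ample is cohomologically acyclic'' statement invoked in (b).
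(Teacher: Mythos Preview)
Your proof is correct, and it shares with the paper the key base-change identity $\hat a_{m,*}\omega_{\hat X_m}\cong\phi_{L^{\otimes m}}^*(a_*\omega_X)$ (this is the paper's Lemma~\ref{9}, proved there by Grothendieck duality and flat base change, just as you do). The two proofs diverge in how they move cohomological information across the isogeny.

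For part (a) the paper avoids your filtration-and-descending-induction argument entirely. Instead of descending the vanishing from $\hat A$ to $A$ through the successive quotients of $\phi_{L^{\otimes m},*}\mathcal O_{\hat A}$, it shows directly that $R\hat S(a_*\omega_X\otimes\widehat{L^{\otimes m}}^\vee)$ is a sheaf in degree $0$: since $\hat\phi_{L^{\otimes m}}$ is finite, this is equivalent to $\hat\phi_{L^{\otimes m},*}R\hat S(\cdots)$ being a sheaf, and by Lemma~\ref{p11} (the compatibility $\varphi_*\circ RS_A=RS_B\circ\hat\varphi^*$) this is exactly $RS$ applied to $\phi_{L^{\otimes m}}^*(a_*\omega_X\otimes\widehat{L^{\otimes m}}^\vee)=\bigoplus\hat a_{m,*}\omega_{\hat X_m}\otimes L^{\otimes m}$, whose higher cohomology vanishes by hypothesis. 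This Fourier--Mukai route is slicker in that it never needs to know the structure of $\phi_{L^{\otimes m},*}\mathcal O_{\hat A}$; your approach is more elementary but requires the (true, via homogeneity and \cite[Example~2.9]{Mukai81}) fact that this pushforward is a successive extension of ${\rm Pic}^0$ line bundles even for inseparable isogenies.

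For part (b) the divergence is larger. The paper does not invoke ``$GV_0\otimes$ ample is $IT_0$'' on $A$ at all. It first uses \cite[Proposition~4.1]{HK12} to upgrade the untwisted hypothesis to $H^j(A,a_*\omega_X\otimes\widehat{L^{\otimes n}}^\vee\otimes P)=0$ for all $P$, then reverses the Fourier--Mukai trick from (a) to conclude that $\hat a_{n,*}\omega_{\hat X_n}\otimes L^{\otimes n}$ satisfies $IT_0$ on $\hat A$. Finally it checks (Lemma~\ref{11}) that $\phi_{L^{\otimes n}}^*(M)\otimes(L^{\otimes n})^\vee$ is ample for $n\gg0$ and applies \cite[Theorem~2.9]{PP03}, which only needs $M$-regularity (a fortiori $IT_0$), not $GV_0$. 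Your route, working on $A$ and ascending, is shorter and actually yields the conclusion for every $n\ge1$; but as you rightly flag, the statement ``$GV_0$ tensor ample line bundle is $IT_0$'' is not literally \cite[Theorem~2.9]{PP03} (which assumes $M$-regularity) and needs its own justification---true, but a genuine extra step compared to the paper's argument.
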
 
\begin{corollary}\label{5}
We use the notation in Theorem \ref{4} but assume that $X$ is a smooth projective surface. \\
{\rm (a)} If $H^i(\hat{X}_{m},\omega_{\hat{X}_m}\otimes \hat{a}_m^*(L^{\otimes m}\otimes P))=0$ for any $P\in {\rm Pic}^0(\hat{A})$ and $i>0$, then $H^j(A,a_*\omega_X\otimes\widehat{L^{\otimes m}}^{\vee})=0$ for any $j>0$. \\
{\rm (b)} If $H^i(A,a_*\omega_X\otimes\widehat{L^{\otimes m}}^{\vee})=0$ for any $i>0$ and $m\gg 0$ then for any ample line bundle $M$,  $H^j(\hat{X}_{n},\omega_{\hat{X}_n}\otimes{\hat{a}_n}^*\phi_{L^{\otimes n}}^*(M))=0$ for any $j>0$ and $n\gg 0$.
\end{corollary}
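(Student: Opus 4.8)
The plan is to deduce both statements from Theorem \ref{4} by comparing, on the fibre product $\hat{X}_m$, the cohomology of $\omega_{\hat{X}_m}$ twisted by a pull-back from $\hat{A}$ with the cohomology of $\hat{a}_{m,*}\omega_{\hat{X}_m}$ twisted on $\hat{A}$. The one substantive input I need is the vanishing $R^i\hat{a}_{m,*}\omega_{\hat{X}_m}=0$ for all $i>0$. Granting this, the projection formula gives $R^i\hat{a}_{m,*}(\omega_{\hat{X}_m}\otimes\hat{a}_m^*N)=0$ for $i>0$ and any line bundle $N$ on $\hat{A}$, so the Leray spectral sequence for $\hat{a}_m$ degenerates and yields natural isomorphisms $H^i(\hat{X}_m,\omega_{\hat{X}_m}\otimes\hat{a}_m^*N)\cong H^i(\hat{A},\hat{a}_{m,*}\omega_{\hat{X}_m}\otimes N)$ for every $i$. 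Taking $N=L^{\otimes m}\otimes P$ turns the hypothesis of (a) into the hypothesis of Theorem \ref{4}(a), whose conclusion is exactly the conclusion of (a); taking $N=\phi_{L^{\otimes n}}^*(M)$ (with $m=n$) turns the conclusion of Theorem \ref{4}(b) into the conclusion of (b), while the hypothesis of (b) is already the hypothesis of Theorem \ref{4}(b). Both halves then follow at once.

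The real work is the vanishing $R^i\hat{a}_{m,*}\omega_{\hat{X}_m}=0$ for $i>0$. The difficulty is that in characteristic $p$ the isogeny $\phi:=\phi_{L^{\otimes m}}$ is typically inseparable, so $\varphi_m\colon\hat{X}_m\to X$ need not be \'etale and $\hat{X}_m$ may be singular, even non-reduced; hence Theorem \ref{p1} cannot be applied to $\hat{a}_m$ directly, and I would instead transport the question to the smooth surface $X$ through $\phi$. Since $\phi$ is finite flat, so is $\varphi_m$; as $\varphi_m$ is affine, $\varphi_{m,*}\mathcal{O}_{\hat{X}_m}\cong a^*\mathcal{G}$ where $\mathcal{G}:=\phi_*\mathcal{O}_{\hat{A}}$ is locally free on $A$, and $\hat{X}_m$, being finite flat over the smooth surface $X$, is Cohen--Macaulay of pure dimension $2$, so $\omega_{\hat{X}_m}$ is a genuine sheaf. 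By Grothendieck duality for the finite flat morphism $\varphi_m$, $\varphi_{m,*}\omega_{\hat{X}_m}\cong\mathcal{H}om_{\mathcal{O}_X}(\varphi_{m,*}\mathcal{O}_{\hat{X}_m},\omega_X)\cong\omega_X\otimes a^*(\mathcal{G}^{\vee})$. Pushing forward by $a$, using the projection formula and Grauert--Riemenschneider vanishing (Theorem \ref{p1}, which applies since $X$ is a smooth surface and $A$ is normal, so that $Ra_*\omega_X=a_*\omega_X$), I would get $Ra_*(\varphi_{m,*}\omega_{\hat{X}_m})\cong\mathcal{G}^{\vee}\otimes a_*\omega_X$, a sheaf concentrated in degree $0$. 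On the other hand, since $\varphi_m$ and $\phi$ are finite and $a\circ\varphi_m=\phi\circ\hat{a}_m$, one has $Ra_*\varphi_{m,*}\omega_{\hat{X}_m}=R(\phi\circ\hat{a}_m)_*\omega_{\hat{X}_m}=\phi_*\bigl(R\hat{a}_{m,*}\omega_{\hat{X}_m}\bigr)$; because $\phi_*$ is exact and faithful on coherent sheaves ($\phi$ being finite and surjective), the complex $R\hat{a}_{m,*}\omega_{\hat{X}_m}$ must itself be concentrated in degree $0$, that is, $R^i\hat{a}_{m,*}\omega_{\hat{X}_m}=0$ for $i>0$.

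The step I expect to be the main obstacle is precisely this vanishing, and the delicate points are: (i) working throughout with the dualizing sheaf of the possibly-singular $\hat{X}_m$, which forces the (routine but essential) checks that $\hat{X}_m$ is Cohen--Macaulay of pure dimension $2$ and that Grothendieck duality for $\varphi_m$ identifies $\varphi_{m,*}\omega_{\hat{X}_m}$ as above; and (ii) making sure every base-change and projection-formula identity is applied along $\phi$, which is flat, and never along $a$, which is only generically finite. Once these bookkeeping matters are settled, the Leray degeneration and the appeal to Theorem \ref{4} are purely formal.
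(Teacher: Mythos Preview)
Your argument is correct and follows the same strategy as the paper, which proves the corollary in one line as ``a direct consequence of Theorem~\ref{4} and Theorem~\ref{p1}'': one needs $R^i\hat a_{m,*}\omega_{\hat X_m}=0$ for $i>0$ so that Leray degenerates and the hypotheses/conclusions of Corollary~\ref{5} match those of Theorem~\ref{4}. Your careful justification of this vanishing---noting that $\hat X_m$ may fail to be smooth in positive characteristic and therefore routing the question through $X$ via the finite flat $\varphi_m$ and Grauert--Riemenschneider on the smooth surface $X$---is exactly the content the paper packages (less explicitly) inside Lemma~\ref{9}, where the chain of Grothendieck-duality and flat-base-change identities gives $R\hat a_{m,*}\omega_{\hat X_m}=\phi_{L^{\otimes m}}^*a_*\omega_X$, a sheaf in degree~$0$. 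So there is no gap; you have simply made explicit a point the paper leaves terse.
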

\begin{proof}
This is a direct consequence of Theorem \ref{4} and Theorem \ref{p1}.
\end{proof}
\begin{proof}[Proof of Theorem \ref{4}.] First we prove (a). We need the following lemma:
\begin{lemma}\label{9}
$\hat{a}_{m,*}\omega_{\hat{X}_m}=\phi^*_{L^{\otimes m}}a_*\omega_X$.
\end{lemma}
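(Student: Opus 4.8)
The plan is to first identify the dualizing sheaf of $\hat X_m$ with $\varphi_m^\ast\omega_X$, after which the lemma becomes pure base change. Write $\phi:=\phi_{L^{\otimes m}}$, so that the defining square
$$
\begin{CD}
\hat X_m @>\varphi_m>> X\\
@V\hat a_mVV @VVaV\\
\hat A @>\phi>> A
\end{CD}
$$
is Cartesian. Since $L^{\otimes m}$ is ample, $\phi$ is an isogeny, hence finite, flat and affine; thus $\hat X_m=\underline{\operatorname{Spec}}_X\!\big(a^\ast\phi_\ast\mathcal{O}_{\hat A}\big)$, the morphism $\varphi_m$ is finite and flat with $\varphi_{m,\ast}\mathcal{O}_{\hat X_m}\cong a^\ast(\phi_\ast\mathcal{O}_{\hat A})$ locally free on $X$, and $\hat X_m$ is Cohen--Macaulay of dimension $\dim X$ (being finite and flat over the smooth $X$); so $\omega_{\hat X_m}$ is a well-defined coherent dualizing sheaf with $\omega_{\hat X_m}^{\cdot}=\omega_{\hat X_m}[\dim X]$.

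For the identification $\omega_{\hat X_m}\cong\varphi_m^\ast\omega_X$ I would argue as follows. As $\varphi_m$ is finite, $\omega_{\hat X_m}^{\cdot}=\varphi_m^{!}\omega_X^{\cdot}$, and since $\varphi_{m,\ast}\mathcal{O}_{\hat X_m}$ is locally free one has $\varphi_m^{!}\mathcal{G}\cong\mathcal{H}om_{\mathcal{O}_X}(\varphi_{m,\ast}\mathcal{O}_{\hat X_m},\mathcal{G})$ with its natural $\mathcal{O}_{\hat X_m}$-module structure; this unwinds to $\omega_{\hat X_m}\cong\omega_{\hat X_m/X}\otimes\varphi_m^\ast\omega_X$, where $\omega_{\hat X_m/X}:=\mathcal{H}om_{\mathcal{O}_X}(\varphi_{m,\ast}\mathcal{O}_{\hat X_m},\mathcal{O}_X)$. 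Plugging in $\varphi_{m,\ast}\mathcal{O}_{\hat X_m}\cong a^\ast(\phi_\ast\mathcal{O}_{\hat A})$, and using that pullback commutes with the $\mathcal{O}$-dual of a vector bundle and with pushforward along the affine morphism $\phi$, one gets $\omega_{\hat X_m/X}\cong\hat a_m^\ast\,\omega_{\hat A/A}$ with $\omega_{\hat A/A}:=\phi^{!}\mathcal{O}_A$ the relative dualizing sheaf of the isogeny. Finally, since $\phi^{!}(-\otimes\mathcal{L})\cong\phi^{!}(-)\otimes\phi^\ast\mathcal{L}$ for invertible $\mathcal{L}$, the identity $\omega_{\hat A}^{\cdot}=\phi^{!}\omega_A^{\cdot}$ yields $\omega_{\hat A/A}\cong\omega_{\hat A}\otimes\phi^\ast\omega_A^{\vee}$; as $\hat A$ and $A$ are abelian varieties their canonical bundles are trivial, so $\omega_{\hat A/A}\cong\mathcal{O}_{\hat A}$, hence $\omega_{\hat X_m/X}\cong\mathcal{O}_{\hat X_m}$ and $\omega_{\hat X_m}\cong\varphi_m^\ast\omega_X$.

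It then remains to apply flat base change along $\phi$: since $\phi$ is flat, $\phi^\ast Ra_\ast\mathcal{F}\cong R\hat a_{m,\ast}\,\varphi_m^\ast\mathcal{F}$ for $\mathcal{F}\in D_{qc}(X)$, and taking $\mathcal{F}=\omega_X$ and passing to degree-zero cohomology (using that $\phi^\ast$ is exact) gives $\phi^\ast a_\ast\omega_X\cong\hat a_{m,\ast}(\varphi_m^\ast\omega_X)\cong\hat a_{m,\ast}\omega_{\hat X_m}$, which is the assertion. The step I expect to require the most care is the duality bookkeeping of the middle paragraph in positive characteristic: $\phi_{L^{\otimes m}}$ need not be \'etale --- its kernel $K(L^{\otimes m})$ can be non-reduced --- so $\hat X_m$ may be singular or even non-reduced and $\varphi_m$ inseparable, and one must check that each step uses only finite flatness (and affineness) of $\phi$ together with smoothness of $X$, so that it survives inseparability, rather than tacitly assuming $\hat X_m$ smooth or $\varphi_m$ \'etale. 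Once this is verified the argument is formal.
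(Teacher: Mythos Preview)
Your proof is correct, and it is a genuinely different (though closely related) route from the paper's.  You first prove the auxiliary identification $\omega_{\hat X_m}\cong\varphi_m^\ast\omega_X$ by computing the relative dualizing sheaf of the finite flat morphism $\varphi_m$ and using that $\omega_{\hat A/A}\cong\mathcal{O}_{\hat A}$, and then finish with a single application of flat base change along $\phi_{L^{\otimes m}}$ to the sheaf $\omega_X$.  The paper never writes down $\omega_{\hat X_m}\cong\varphi_m^\ast\omega_X$; instead it runs a ``duality sandwich'': it applies Grothendieck duality for $\hat a_m$ to rewrite $R\hat a_{m,\ast}\omega_{\hat X_m}$ as $R\mathcal{H}om(R\hat a_{m,\ast}\mathcal{O}_{\hat X_m},\omega_{\hat A})$, applies flat base change to $\mathcal{O}_X$ (rather than to $\omega_X$), uses $\omega_{\hat A}=\phi^\ast\omega_A$ and compatibility of $R\mathcal{H}om$ with flat pullback, and then applies Grothendieck duality for $a$ in reverse.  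Both arguments rest on exactly the same three inputs --- Grothendieck duality, flat base change for the Cartesian square, and triviality of the canonical bundle on abelian varieties --- just assembled in different orders.  Your version has the advantage of isolating the useful intermediate fact $\omega_{\hat X_m}\cong\varphi_m^\ast\omega_X$; the paper's version stays entirely at the level of dualizing complexes and avoids the bookkeeping of relative dualizing sheaves and base change for $(-)^!$.  Your caution about inseparability is well placed and your argument is indeed robust to it: every step uses only that $\phi$ is finite and flat, not that it is \'etale.
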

\begin{proof}
Since $\hat{a}_{m,*}\omega_{\hat{X}_m}$ is a sheaf, we can view it as a complex that is nonzero only in degree $0$, so by Theorem \ref{p1} and Grothendieck duality we have
\begin{align*}
\hat{a}_{m,*}\omega_{\hat{X}_m}=& R\hat{a}_{m,*}\omega_{\hat{X}_m}=R\hat{a}_{m,*}R\mathcal{H}om(\mathcal{O}_{\hat{X}_m},\omega_{\hat{X}_m}) \\
=&R\mathcal{H}om(R\hat{a}_{m,*}\mathcal{O}_{\hat{X}_m},\omega_{\hat{A}}).
\end{align*}
On the other hand since $A$ and $\hat{A}$ are abelian varieties we have $\omega_A=\mathcal{O}_A$ and $\omega_{\hat{A}}=\mathcal{O}_{\hat{A}}$. By \cite[Proposition 5.2]{MvdG}, $\phi_{L^{\otimes m}}$ is flat, so $\phi_{L^{\otimes m}}^*Ra_*\mathcal{O}_X= R\hat{a}_{m,*}\varphi_m^*\mathcal{O}_X$. Then 
\begin{align*}
\hat{a}_{m,*}\omega_{\hat{X}_m}=& R\mathcal{H}om(R\hat{a}_{m,*}\mathcal{O}_{\hat{X}_m},\omega_{\hat{A}})=R\mathcal{H}om(R\hat{a}_{m,*}\varphi_m^*\mathcal{O}_{X},\omega_{\hat{A}}) \\
=& R\mathcal{H}om(\phi_{L^{\otimes m}}^*Ra_*\mathcal{O}_{X},\phi_{L^{\otimes m}}^*\omega_{A})
=\phi_{L^{\otimes m}}^*R\mathcal{H}om(Ra_*\mathcal{O}_{X},\omega_{A}) \\
= &\phi_{L^{\otimes m}}^*Ra_*R\mathcal{H}om(\mathcal{O}_{X},\omega_{X})=\phi_{L^{\otimes m}}^*a_*\omega_X.
\end{align*}
The fourth equality is by \cite[Proposition 5.8]{Hartshorne66} and the fact that $\phi_{L^{\otimes m}}$ is flat, and the fifth equality is by Grothendieck duality.
\end{proof}

We will now prove that $H^j(A,a_*\omega_X\otimes\widehat{L^{\otimes m}}^{\vee})=0$ for any $ j>0$. By cohomology and base change it suffices to show that $R\hat{S}(a_*\omega_X\otimes\widehat{L^{\otimes m}}^{\vee})$ is a sheaf in degree $0$. Since $\hat{\phi}_{L^{\otimes m}}$ is finite, it is equivalent to show that     
\begin{align*}
&\hat{\phi}_{L^{\otimes m},*} R\hat{S}(a_*\omega_X\otimes\widehat{L^{\otimes m}}^{\vee})=RS(\phi_{L^{\otimes m}}^*(a_*\omega_X\otimes\widehat{L^{\otimes m}}^{\vee}))  \\
=& RS(\hat{a}_{m,*}\omega_{\hat{X}_m}\otimes\bigoplus^{h^0(L^{\otimes m})}(L^{\otimes m}))
\end{align*}  
is a sheaf in degree $0$, i.e. $\displaystyle R^jS(\hat{a}_{m,*}\omega_{\hat{X}_m}\otimes\bigoplus^{h^0(L^{\otimes m})}(L^{\otimes m}))=0, \forall j>0$. Here the first equality is by Lemma \ref{p11} and the second equality is by Lemma \ref{9}. Again by cohomology and base change it is equivalent to the vanishing of $H^j(\hat{A},\hat{a}_{m,*}\omega_{\hat{X}_m}\otimes L^{\otimes m}\otimes P)$ for any $P\in {\rm Pic}^0(\hat{A})$ and $j>0$, which is exactly the assumption. 

Now we prove part (b). We will need the following lemmas. 
\begin{lemma}\label{8}
For any integer ${m>0}$ we have $\phi_{L^{\otimes m}}=m_{A}\circ\phi_{L}$. Here $m_{A}$ is the multiplication by $m$ on $A$.
\end{lemma}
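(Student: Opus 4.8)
The plan is to deduce this from the elementary fact that the assignment $M\mapsto\phi_M$, from line bundles on $\hat A$ to homomorphisms $\hat A\to A$, is additive, together with the fact that $\phi_L$ is a genuine homomorphism of abelian varieties. Recall that for a line bundle $M$ on $\hat A$ the homomorphism $\phi_M\colon\hat A\to{\rm Pic}^0(\hat A)=A$ sends a point $y$ to the class of $T_y^*M\otimes M^{\vee}$. First I would observe that for line bundles $M_1,M_2$ on $\hat A$ and any $y\in\hat A$ one has $T_y^*(M_1\otimes M_2)\otimes(M_1\otimes M_2)^{\vee}\cong(T_y^*M_1\otimes M_1^{\vee})\otimes(T_y^*M_2\otimes M_2^{\vee})$, simply because $T_y^*$ is compatible with tensor products; under the identification ${\rm Pic}^0(\hat A)=A$ this reads $\phi_{M_1\otimes M_2}(y)=\phi_{M_1}(y)+\phi_{M_2}(y)$, i.e. $\phi_{M_1\otimes M_2}=\phi_{M_1}+\phi_{M_2}$ in ${\rm Hom}(\hat A,A)$.

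Applying this identity $m-1$ times with $M_1=\dots=M_m=L$ gives $\phi_{L^{\otimes m}}=\phi_L+\dots+\phi_L$ ($m$ summands) in ${\rm Hom}(\hat A,A)$, so that $\phi_{L^{\otimes m}}(y)=m\cdot\phi_L(y)$ for every $y\in\hat A$, where $m\cdot(-)$ denotes the $m$-fold iterate of the group law on $A$, that is, the morphism $m_A$. Hence $\phi_{L^{\otimes m}}=m_A\circ\phi_L$, as claimed. (One could equally rewrite $m\cdot\phi_L$ as $\phi_L\circ m_{\hat A}$, using that $\phi_L$ is a homomorphism.)

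There is essentially no obstacle in this argument; the only points deserving a word of care are the standard double-duality identification ${\rm Pic}^0(\hat A)\cong A$ (so that the target of both $\phi_{L^{\otimes m}}$ and $\phi_L$ is literally $A$) and the fact, used implicitly, that $\phi_L$ is a homomorphism of abelian varieties — the theorem of the square, see \cite[\S 8]{Mumford12} or \cite{MvdG} — which is what legitimizes rewriting the $m$-fold sum $m\cdot\phi_L$ as $m_A\circ\phi_L$. Since the additivity of $M\mapsto\phi_M$ is itself recorded in these references, one may alternatively just cite it directly.
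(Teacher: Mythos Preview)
Your proposal is correct and is essentially the same as the paper's proof: both compute $\phi_{L^{\otimes m}}(x)=T_x^*(L^{\otimes m})\otimes(L^{\otimes m})^{\vee}=(T_x^*L\otimes L^{\vee})^{\otimes m}$ and then identify the $m$-th tensor power in ${\rm Pic}^0(\hat A)=A$ with the image under $m_A$. The only cosmetic difference is that you phrase the first step as the general additivity $\phi_{M_1\otimes M_2}=\phi_{M_1}+\phi_{M_2}$ before specializing, whereas the paper writes out the single chain of equalities directly.
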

\begin{proof}
By definition of $m_A$ and \cite[Corollary 7.17]{MvdG} for any $P\in {\rm Pic}^0(\hat{A})=A$  we have $P^{\otimes m}=m_A(P)$. So
$$\phi_{L^{\otimes m}}(x)=(L^{\otimes m})^{\vee}\otimes t_x^*(L^{\otimes m})=(L^{\vee}\otimes t_x^*L)^{\otimes m}=m_{A}(L^{\vee}\otimes t_x^*L)=m_{A}\circ\phi_{L}(x),\ \forall x\in \hat{A}.$$ 
\end{proof}
\begin{lemma} \label{11}
For any ample line bundle $M$, $\phi_{L^{\otimes n}}^*(M)\otimes (L^{\otimes n})^{\vee}$ is ample for $n\gg 0$.
\end{lemma}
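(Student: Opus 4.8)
The plan is to transport the line bundle to the multiplication-by-$n$ map on $\hat A$ using Lemma~\ref{8}, and then expand via the theorem of the cube. First I would note that $\phi_L\colon\hat A\to A$ is a homomorphism of abelian varieties, so it intertwines the multiplication maps, $n_A\circ\phi_L=\phi_L\circ n_{\hat A}$; combined with Lemma~\ref{8} (taken with $m=n$) this yields $\phi_{L^{\otimes n}}=n_A\circ\phi_L=\phi_L\circ n_{\hat A}$, hence an isomorphism of line bundles
$$\phi_{L^{\otimes n}}^*(M)\otimes(L^{\otimes n})^{\vee}\;\cong\;n_{\hat A}^*\bigl(\phi_L^*M\bigr)\otimes(L^{\vee})^{\otimes n}.$$
Writing $N:=\phi_L^*M$, the bundle $N$ is ample since $\phi_L$ is an isogeny and $M$ is ample, and therefore $(-1_{\hat A})^*N$ is ample as well.

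Next I would expand $n_{\hat A}^*N$ by the standard corollary of the theorem of the cube, $n_{\hat A}^*N\cong N^{\otimes\frac{n^2+n}{2}}\otimes\bigl((-1_{\hat A})^*N\bigr)^{\otimes\frac{n^2-n}{2}}$ (cf.\ \cite{Mumford12}), and regroup the exponents together with $(L^{\vee})^{\otimes n}$ so that the resulting bundle appears as a positive tensor power of one of the shape $N^{\otimes a}\otimes((-1_{\hat A})^*N)^{\otimes b}\otimes L^{\vee}$. A harmless parity split makes this uniform; for instance, for odd $n=2m+1$ one gets
$$\phi_{L^{\otimes n}}^*(M)\otimes(L^{\otimes n})^{\vee}\;\cong\;\Bigl(N^{\otimes(m+1)}\otimes\bigl((-1_{\hat A})^*N\bigr)^{\otimes m}\otimes L^{\vee}\Bigr)^{\otimes(2m+1)},$$
and similarly for even $n$. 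As $m\to\infty$ the exponents of the ample bundles $N$ and $(-1_{\hat A})^*N$ tend to infinity while $L^{\vee}$ stays fixed, so the bundle inside the parentheses is ample for $m\gg 0$ (a high enough tensor power of an ample line bundle, twisted by any fixed line bundle, is ample); hence so is its tensor power, which gives the claim.

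I do not expect a genuine obstacle here: the only delicate points are that the cube formula forces in the non-symmetric factor $(-1_{\hat A})^*N$ — which is nevertheless ample, hence harmless — and the bookkeeping of exponents, handled by the parity split. One can sidestep the bookkeeping altogether by passing to numerical equivalence classes, where $n_{\hat A}^*$ acts as multiplication by $n^2$: the bundle is then numerically $n^2\,\phi_L^*M-nL=n\bigl(n\,\phi_L^*M-L\bigr)$, which is ample for $n\gg 0$, and on an abelian variety ampleness depends only on the numerical class (if $N'\equiv N$ then $N'=t_x^*N$ for a suitable $x$, since $\phi_N$ is surjective, so $N'$ is ample).
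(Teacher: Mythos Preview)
Your proof is correct, and the alternative you sketch at the end via numerical equivalence is exactly the paper's argument: the paper writes $\phi_{L^{\otimes n}}^*M\otimes(L^{\otimes n})^{\vee}=\phi_L^*n_A^*M\otimes(L^{\otimes n})^{\vee}\equiv_{\rm num}((\phi_L^*M)^{\otimes n}\otimes L^{\vee})^{\otimes n}$ and concludes. Your main route through the explicit cube formula and parity split is also fine but simply more work than necessary; the numerical shortcut (which you already identified) suffices since ampleness is a numerical condition.
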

\begin{proof}
By Lemma \ref{8} we have
$$\phi_{L^{\otimes n}}^*M\otimes (L^{\otimes n})^{\vee}=\phi_L^*n_A^*M\otimes (L^{\otimes n})^{\vee}\equiv_{\rm num}(\phi_L^*M)^{\otimes n^2}\otimes (L^{\otimes n})^{\vee}=((\phi_L^*M)^{\otimes n}\otimes L^{\vee})^{\otimes n},$$
and $(\phi_L^*M)^{\otimes n}\otimes L^{\vee}$ is ample for $n\gg 0$.
\end{proof}
Suppose $H^i(A,a_*\omega_X\otimes\widehat{L^{\otimes n}}^{\vee})=0$ for some $m\gg 0$ and any $i>0$, then the proof of \cite[Proposition 4.1]{HK12} implies that $H^j(A,a_*\omega_X\otimes\widehat{L^{\otimes n}}^{\vee}\otimes P)=0$ for any $P\in {\rm Pic}^0(A)$ and $j>0$. Next we use the same trick as in the proof of (a) which implies that $R^jS(\phi_{L^{\otimes m}}^*(a_*\omega_X)\otimes(L^{\otimes n}))=0$, and this is again equivalent to $H^j(\hat{A},\hat{a}_{m,*}\omega_{\hat{X}_m}\otimes L^{\otimes n}\otimes P)=0$ by Lemma \ref{9} and cohomology and base change. Moreover since $\phi_{L^{\otimes n}}^*(M)\otimes (L^{\otimes n})^{\vee}$ is ample for $n\gg 0$, by \cite[Example 2.2 and Theorem 2.9]{PP03} we have 
$$H^i(\hat{A},\hat{a}_{m,*}\omega_{\hat{X}_m}\otimes\phi_{L^{\otimes n}}^*(M))=H^i(\hat{A},\hat{a}_{m,*}\omega_{\hat{X}_m}\otimes L^{\otimes n}\otimes(\phi_{L^{\otimes n}}^*(M)\otimes (L^{\otimes n})^{\vee}))=0.$$
\end{proof}
\section{Irrational fibration of surfaces with Euler characteristic 0}
In this section we assume that $k$ is of positive characteristic. We start this section by giving a lower bound for the Euler characteristic $\chi(\mathcal{O}_X)=\chi(\omega_X)$ of certain surfaces using the results in Section \ref{s3}.
\begin{proposition}\label{f1}
If $X$ is a smooth projective surface which is mAd and lifts to $W_2(k)$, then $\chi(\omega_X)\ge 0$.
\end{proposition}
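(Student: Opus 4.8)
The plan is to deduce Proposition \ref{f1} directly from the generic vanishing result established in Theorem \ref{1} (equivalently Theorem \ref{i1}), which applies because $X$ is mAd (so the Albanese morphism $a\colon X\to A$ is generically finite onto its image) and lifts to $W_2(k)$. The key point is that once we know $\omega_X$ is $GV_0$ — which, as explained in Remark \ref{codimV1}, is exactly what Theorem \ref{1} gives — the Euler characteristic $\chi(\omega_X)$ is forced to be nonnegative by a standard Fourier--Mukai argument.

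Concretely, I would first replace $A$ by $a(X)$ is not quite legal since $a(X)$ need not be abelian, so instead I work with the full Albanese $a\colon X\to A=\mathrm{Alb}(X)$; since $X$ is mAd, $a$ is generically finite, so Theorem \ref{1} applies verbatim. By that theorem together with Theorem \ref{p5}, $\mathcal{F}:=a_*\omega_X$ is $GV_0$ on $A$, and hence by part (3) of Theorem \ref{p5}, $R\hat S(D_A(\mathcal{F}))=R^0\hat S(D_A(\mathcal{F}))$ is a single sheaf in degree $0$. Now I compute $\chi(\omega_X)$: by the Leray spectral sequence for $a$, the projection formula, and the fact (Grauert--Riemenschneider, Theorem \ref{p1}) that $R^ia_*\omega_X=0$ for $i>0$, one has $\chi(X,\omega_X\otimes a^*P)=\chi(A,\mathcal{F}\otimes P)$ for every $P\in\mathrm{Pic}^0(A)$, and this common value is independent of $P$ and equals $\chi(\mathcal{O}_X)$. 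On an abelian variety, $\chi(A,\mathcal{F}\otimes P)$ computes (up to the usual sign $(-1)^n$) the rank of $R\hat S(\mathcal{F})$ thought of as an Euler characteristic of the complex $R\hat S(\mathcal F)$ at a general point; more precisely, for a $GV_0$ sheaf $\mathcal F$ one has $h^i(A,\mathcal F\otimes P)=0$ for $i>0$ and general $P$, so $\chi(\mathcal F)=h^0(A,\mathcal F\otimes P)\ge 0$ for general $P$. That immediately yields $\chi(\omega_X)\ge 0$.

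So the cleanest route is: (i) invoke Theorem \ref{1}/Remark \ref{codimV1} to get $V^i(\omega_X)\subsetneq \mathrm{Pic}^0(X)$ for $i=1,2$ (indeed $\mathrm{codim}\,V^1\ge 1$ and $\mathrm{codim}\,V^2\ge 2$); (ii) use this plus semicontinuity to find a general $P\in\mathrm{Pic}^0(X)$ (pulled back from $\mathrm{Pic}^0(A)$) with $H^1(X,\omega_X\otimes P)=H^2(X,\omega_X\otimes P)=0$; (iii) conclude $\chi(\omega_X)=\chi(\omega_X\otimes P)=h^0(X,\omega_X\otimes P)\ge 0$, using that $\chi$ is deformation-invariant in a flat family and $P\in\mathrm{Pic}^0$ so the numerical class of $\omega_X\otimes P$ equals that of $\omega_X$, hence the Euler characteristics agree (Riemann--Roch). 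Since $\chi(\mathcal{O}_X)=\chi(\omega_X)$ by Definition \ref{Irreg&Euler} (Serre duality on a surface), this gives the proposition.

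I do not expect any serious obstacle here: the whole content is packaged into Theorem \ref{1}, and the remaining steps are the standard ``$GV_0 \Rightarrow \chi\ge 0$'' manipulation. The only mild care needed is the bookkeeping that $P$ ranges over $a^*\mathrm{Pic}^0(A)\subseteq\mathrm{Pic}^0(X)$ and that the identity $\chi(\omega_X)=\chi(A,a_*\omega_X\otimes P)$ is legitimate (this is where Theorem \ref{p1} is used to kill the higher direct images so the Leray spectral sequence degenerates), plus noting that $\mathrm{Pic}^0(X)\to\mathrm{Pic}^0(A)$ is an isomorphism since $A=\mathrm{Alb}(X)$, so ``general in $\mathrm{Pic}^0(A)$'' is the same as ``general in $\mathrm{Pic}^0(X)$''. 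If one wanted to avoid even that, one can argue entirely via Fourier--Mukai: $\chi(\mathcal O_X)=\chi(a_*\omega_X)=(-1)^{\dim A}\,\mathrm{rk}\,R\hat S(a_*\omega_X)$, and $GV_0$ forces $R\hat S(a_*\omega_X)$ to have its $R^0$ generically of full rank, giving nonnegativity directly.
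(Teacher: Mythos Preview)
Your proposal is correct and the ``cleanest route'' (i)--(iii) is exactly the paper's proof: invoke Theorem \ref{1} (with Theorem \ref{p1}) to obtain $H^i(X,\omega_X\otimes P)=0$ for $i>0$ and general $P\in\mathrm{Pic}^0(X)$, then use Riemann--Roch to conclude $\chi(\omega_X)=\chi(\omega_X\otimes P)=h^0(X,\omega_X\otimes P)\ge 0$. The surrounding Fourier--Mukai discussion is unnecessary but harmless.
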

\begin{proof}
Since $X$ lifts to $W_2(k)$, by Theorem \ref{1} and Theorem \ref{p1} we know $H^i(X,\omega_X\otimes P)=0$ for any $i>0$ and general $P\in {\rm Pic}^0(X)$. Then we take such a $P$ and by Riemann-Roch theorem we have $\chi(\omega_X)=\chi(\omega_X\otimes P)=h^0(X,\omega_X\otimes P)\ge 0$.
\end{proof}

Next we give some examples that satisfy the conditions in \ref{f1} and their Euler characteristics are equal to $0$.
\begin{example}
It is easy to see that the Euler characteristic of an abelian surface $A$ is $0$ and by \cite{MS87} Theorem 1 in Appendix, ordinary abelian varieties actually lift to $W(k)$, the ring of Witt vectors.
\end{example}
\begin{example}\label{f4}
Let $C_1$, $C_2$ be two smooth curves with $g(C_1)=1$ and $g(C_2)=n\ge 1$. Let $X=C_1\times C_2$ then obviously $X$ is mAd, and by the K\"unneth formula and Serre duality we can calculate $\chi(\omega_X)$ as follows:
\begin{align*}
\chi(\omega_X)=\chi(\omega_{C_1})\chi(\omega_{C_2})=(g(C_1)-1)(g(C_2)-1)=0 
\end{align*}
It is known that the only obstruction to lifting a smooth variety $X$ over $k$ of characteristic $p$ to $W_2(k)$ lies in ${\rm Ext}^2(\Omega_{X/k}^1,\mathcal{O}_X)$. When $X$ is a curve this group is $0$, which implies that every curve can be lifted to $W_2(k)$. Therefore $C_1\times C_2$ as above can also be lifted to $W_2(k)$. 
\end{example}
 
Example \ref{f4} has two obvious fibrations onto $C_1$ and $C_2$ and they are both irrational. Actually according to the Enriques-Kodaira classification for algebraic surfaces we know that for a smooth surface $X$ that lifts to $W_2(k)$, $\chi(\mathcal{O}_X)=0$ can only happen when $X$ is an elliptic surface, a quasi-elliptic surface or an abelian surface (see \cite{Liedtke13}). However in the case of elliptic surfaces or quasi-elliptic surfaces it is not clear what the base curve of the corresponding elliptic fibration looks like. The following theorem addresses this issue:
\begin{theorem}\label{f2}
Let $X$ be a smooth minimal projective surface which is mAd and lifts to $W_2(k)$, and its Picard variety has no supersingular factors. If $\chi(\omega_X)=0$ then either $X$ has an irrational pencil of genus  $\ge {\rm dim}(V^1(\omega_X))\ge 1$ or $X$ is an abelian surface. 
\end{theorem}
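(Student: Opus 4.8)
The plan is to combine the generic vanishing of Theorem \ref{1} with the Enriques--Kodaira classification in positive characteristic. Assume $\chi(\omega_X)=0$. Since $X$ lifts to $W_2(k)$, Theorem \ref{1} together with Remark \ref{codimV1} shows $\omega_X$ is $GV_0$, so by Riemann--Roch a general $P\in{\rm Pic}^0(X)$ satisfies $h^0(\omega_X\otimes P)=h^1(\omega_X\otimes P)=h^2(\omega_X\otimes P)=0$; in particular $V^1(\omega_X)$ is a proper closed subset of ${\rm Pic}^0(X)$, hence $\dim V^1(\omega_X)\le q(X)-1$, and by Proposition \ref{p19} (no supersingular factors) its components are torsion translates of abelian subvarieties. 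If $X$ is abelian we are done, so assume not; then the classification recalled before the statement (see \cite{Liedtke13}, \cite{BM77}) shows $X$ is elliptic or quasi-elliptic of Kodaira dimension $1$. In the quasi-elliptic case the general fibre is a rational curve, so the Albanese morphism contracts it and $\dim a(X)\le 1$, contradicting maximal Albanese dimension. Thus $X$ admits a genuine elliptic fibration $f\colon X\to C$ with connected fibres onto a smooth curve $C$.

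Next I would show that $f$ is an irrational pencil with $g(C)\ge q(X)-1$. Since $X$ is mAd the general fibre $E$ of $f$ is not contracted by $a$; a morphism from an elliptic curve to $A$ is a translate of a homomorphism, and the one-dimensional abelian subvariety it generates is a discrete invariant and hence does not move as $E$ varies, so $a$ sends the fibres of $f$ into translates of one fixed one-dimensional abelian subvariety $E_0\subseteq A$. Then $X\xrightarrow{a}A\to A':=A/E_0$ is constant on the fibres of $f$ and so factors as $X\xrightarrow{f}C\xrightarrow{\bar f}A'$ by rigidity. As the Albanese image of $X$ generates $A$, the image $\bar f(C)$ must generate $A'$ (otherwise $a(X)$ would lie in a proper abelian subvariety of $A$); hence ${\rm Jac}(C)$ surjects onto $A'$ and $g(C)\ge\dim A'=q(X)-1\ge 1$. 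So $f$ is an irrational pencil, and combined with $\dim V^1(\omega_X)\le q(X)-1$ from the previous paragraph we get $g(C)\ge\dim V^1(\omega_X)$.

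It remains to prove $\dim V^1(\omega_X)\ge 1$. I would feed $\omega_X\otimes f^*M$ with $M\in{\rm Pic}^0(C)$ into the Leray spectral sequence of $f$: since $R^1f_*\mathcal{O}_X$ is invertible of degree $-\chi(\mathcal{O}_X)=0$ for a relatively minimal elliptic surface, relative duality and the projection formula give $R^1f_*(\omega_X\otimes f^*M)=\omega_C\otimes M$, while $H^2(C,-)=0$, so $H^1(X,\omega_X\otimes f^*M)$ surjects onto $H^0(C,\omega_C\otimes M)$. When $g(C)\ge 2$, Riemann--Roch gives $h^0(\omega_C\otimes M)\ge g(C)-1>0$ for every $M$, whence $f^*{\rm Pic}^0(C)\subseteq V^1(\omega_X)$ and $\dim V^1(\omega_X)\ge g(C)\ge 2$. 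The remaining possibility $g(C)=q(X)-1=1$, i.e. $q(X)=2$ and $a\colon X\to A$ generically finite onto an abelian surface, is the delicate one: now $\omega_C=\mathcal{O}_C$ and the Leray argument is inconclusive, so one must instead exploit that $\kappa(X)=1$ forces at least one multiple fibre of $f$ (from the canonical bundle formula $\omega_X=f^*L\otimes\mathcal{O}_X(\sum(m_i-1)F_i)$ with $\deg L=\chi(\mathcal{O}_X)=0$ and effectivity of the ramification divisor $K_X$ of $a$), and then show that the multiple fibres contribute a positive-dimensional locus to $V^1(\omega_X)$ lying outside $f^*{\rm Pic}^0(C)$ — alternatively, show directly that $a$ must then be étale, so that $X$ is abelian and this case does not occur.

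The hard part is exactly this last step: producing a positive-dimensional component of $V^1(\omega_X)$ without the Hodge-theoretic tools (Castelnuovo--de Franchis, Hodge symmetry) available in characteristic zero, and controlling the cohomological contribution of the (possibly wild) multiple fibres of $f$. Everywhere the decisive inputs are the liftability of $X$ to $W_2(k)$ and the absence of supersingular factors in ${\rm Pic}^0(X)$: the former through the generic vanishing of Theorem \ref{1}, the latter through the complete linearity of $V^1(\omega_X)$ provided by Proposition \ref{p19}.
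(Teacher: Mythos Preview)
Your approach is genuinely different from the paper's and the geometric parts are nice, but the gap you flag in the last paragraph is real and is precisely the heart of the matter. You reduce everything to the case $\dim A=2$, $g(C)=1$, and there your sketch via the canonical bundle formula and multiple fibres is not a proof: in positive characteristic the canonical bundle formula for an elliptic fibration carries extra contributions from wild fibres, and there is no mechanism in your outline that forces a positive-dimensional component of $V^1(\omega_X)$ to appear, nor one that forces $a$ to be \'etale. (Also, a small slip: throughout you should write $\dim A$ rather than $q(X)$; in positive characteristic these need not agree, though your inequalities survive since $V^1(\omega_X)\subsetneq\widehat A$.)

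The paper bypasses this entirely by reversing the logic. It first proves Lemma \ref{f3}: if $\dim V^1(\omega_X)\ge 1$, take a maximal component $Q+T$, dualize to get $X\to \widehat T$, and use Theorem \ref{1} on this new map to force the image to be a curve of genus $\ge\dim T=\dim V^1(\omega_X)$. So either the pencil in the statement exists, or $\dim V^1(\omega_X)=0$. In the latter case, $\chi(\omega_X)=0$ then gives $\dim V^0(a_*\omega_X)=0$ as well, and the $GV_0$ property forces $R\hat S(D_A(a_*\omega_X))$ to be a sheaf supported on finitely many points; by Mukai's description of such objects, $a_*\omega_X$ is a successive extension of line bundles in ${\rm Pic}^0(A)$, hence a vector bundle on all of $A$. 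This already gives $\dim A=2$ and $a$ surjective. Grothendieck--Riemann--Roch then yields $a_*K_X\equiv_{\rm num}0$, and since $X$ is minimal and mAd the negativity lemma forces $K_X\equiv 0$, whence $\kappa(X)=0$ and $X$ is abelian by classification. Note that the paper invokes the Enriques--Kodaira classification only at the very end, after establishing $\kappa(X)=0$; it never needs to produce or analyse an explicit elliptic pencil.

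What you would need to close your gap is essentially this Fourier--Mukai plus Grothendieck--Riemann--Roch step: it is exactly what shows that $\dim V^1(\omega_X)=0$ together with $\chi=0$ forces $X$ to be abelian, and there does not seem to be a softer substitute in characteristic $p$.
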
 

To prove the theorem we need the following lemma which will also be useful in later sections.
\begin{lemma}\label{f3}
Let $X$ be a projective surface which is mAd and lifts to $W_2(k)$, and its Picard variety has no supersingular factors. Then either $X$ admits an irrational pencil of genus $\ge {\rm dim}(V^1(\omega_X))\ge 1$ or ${\rm dim} V^1(\omega_X)=0$.
\end{lemma}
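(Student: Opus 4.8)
The plan is to adapt the classical Green--Lazarsfeld/Beauville argument, using Theorem~\ref{1} and Proposition~\ref{p19} in place of the Hodge-theoretic input that is unavailable in positive characteristic. We may assume $d:=\dim V^1(\omega_X)\ge 1$, since otherwise the second alternative of the lemma holds. Because the Picard variety of $X$ has no supersingular factors, Proposition~\ref{p19} tells us that $V^1(\omega_X)=S_1^{1,2}(X)$ is completely linear; so I would choose an irreducible component $T$ of $V^1(\omega_X)$ with $\dim T=d$ and write $T=Q\otimes S$, where $S\subseteq {\rm Pic}^0(X)$ is an abelian subvariety of dimension $d$ and $Q\in{\rm Pic}^0(X)$ is a torsion point.

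Next I would turn $S$ into a candidate pencil. Let $a\colon X\to A$ be the Albanese morphism, so that $a^*$ identifies ${\rm Pic}^0(A)$ with ${\rm Pic}^0(X)$ and hence identifies $A$ with the dual abelian variety of ${\rm Pic}^0(X)$. Let $p\colon A\to B$ be the quotient morphism dual to the inclusion $S\hookrightarrow{\rm Pic}^0(X)$, and put $f:=p\circ a\colon X\to B$. Then $B$ is an abelian variety with $\dim B=d$, the morphism $p$ is surjective with connected kernel, and $f^*$ identifies ${\rm Pic}^0(B)$ with the subgroup $S\subseteq{\rm Pic}^0(X)$. In particular, for every $R\in{\rm Pic}^0(B)$ we have $Q\otimes f^*R\in Q\otimes S=T\subseteq V^1(\omega_X)$, i.e.
$$H^1\!\left(X,\omega_X\otimes Q\otimes f^*R\right)\neq 0\qquad\text{for all }R\in{\rm Pic}^0(B).$$

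The heart of the matter, and the step I expect to be the main obstacle, is to show that $f$ is \emph{not} generically finite onto its image. Suppose it were. Then $f$ is a generically finite morphism from $X$ to the abelian variety $B$, and $X$ lifts to $W_2(k)$, so Theorem~\ref{1} applies to $f$: taking $P=Q$ and $k=1$ in its ``in particular'' clause gives $H^1\!\left(B,f_*(\omega_X\otimes Q)\otimes R\right)=0$ for general $R\in{\rm Pic}^0(B)$. By the Grauert--Riemenschneider vanishing theorem (Theorem~\ref{p1}) we have $R^if_*(\omega_X\otimes Q)=0$ for $i>0$, so the Leray spectral sequence degenerates and $H^1(X,\omega_X\otimes Q\otimes f^*R)\cong H^1(B,f_*(\omega_X\otimes Q)\otimes R)=0$ for general $R\in{\rm Pic}^0(B)$. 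Since ${\rm Pic}^0(B)$ is irreducible, this contradicts the nonvanishing displayed above, which held for \emph{every} $R$. Hence $\dim f(X)\le 1$.

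Finally I would extract the pencil. Since $a(X)$ generates $A$ and $p$ is surjective, $f(X)$ generates $B$; as $\dim B=d\ge 1$, this forces $\dim f(X)=1$. Let $X\xrightarrow{\,g\,}Y\xrightarrow{\,h\,}B$ be the Stein factorization of $f$. Then $Y$ is a normal, hence smooth, projective curve, $g$ is a fibration (connected fibers, $g_*\mathcal{O}_X=\mathcal{O}_Y$), and $h$ is finite. Because $h(Y)=f(X)$ generates $B$, the induced morphism ${\rm Alb}(Y)\to B$ is surjective, so the genus of $Y$ is at least $\dim B=d\ge 1$. Therefore $g\colon X\to Y$ is an irrational pencil of genus $\ge \dim V^1(\omega_X)$, as required. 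The only points left to verify are routine: the abelian-variety duality used to produce $p$ and to identify $f^*{\rm Pic}^0(B)$ with $S$, and the classical fact that a curve generating an abelian variety $B$ has genus at least $\dim B$.
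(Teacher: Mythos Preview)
Your proof is correct and follows essentially the same route as the paper's: pick a maximal-dimensional component of $V^1(\omega_X)$, which by Proposition~\ref{p19} is a torsion translate of an abelian subvariety, dualize to obtain a morphism $f\colon X\to B$ to an abelian variety of dimension $d$, and rule out the generically-finite case by applying Theorem~\ref{1} (together with Theorem~\ref{p1}) to contradict membership in $V^1(\omega_X)$. The only visible difference is cosmetic: the paper argues by contradiction and reads the genus bound directly off the image curve $f(X)\subset B$, whereas you pass through the Stein factorization $X\to Y\to B$ and bound $g(Y)$ via the surjection $\mathrm{Alb}(Y)\to B$; your version is slightly more careful in producing an actual fibration with connected fibers, but the underlying argument is the same.
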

\begin{proof}
Suppose that $X$ does not admit an irrational pencil of genus $\ge {\rm dim}(V^1(\omega_X))$ and ${\rm dim} V^1(\omega_X)\ge 1$. Let $a:X\to A$ be the Albanese morphism. By \cite[Corollary 3.4]{PR04} $V^1(\omega_X)$ is completely linear, so we can take $T+Q$ to be a component of maximal dimension of $V^1(\omega_X)$ where $T$ is abelian subvariety of dimension $\ge 1$ and $Q$ is a torsion element. Now $T$ is an abelian subvariety of $\hat{A}$, so after taking its dual we get a surjective morphism $c:A\to \hat{T}$. Denote $c\circ a$ by $g$. If ${\rm dim}(g(X))=1$ then as $g(X)$ generates $\hat{T}$, its genus must be $\ge {\rm dim}(T)={\rm dim}(V^1(\omega_X))\ge 1$. Since we have supposed this is not the case we must have ${\rm dim}(g(X))=2$. By Theorem \ref{1} and Theorem \ref{p1} we know that
$$h^1(X,\omega_X\otimes Q\otimes g^*R)=h^1(\hat{T},g_*(\omega_X\otimes Q)\otimes R)=0$$
 for general $R\in {\rm Pic}^0(\hat{T})=T$, which contradicts the definition of $T+Q$. 
\end{proof}
\begin{proof}[Proof of Theorem \ref{f2}.]
If $X$ does not have an irrational pencil of genus $\ge {\rm dim}V^1(\omega_X)\ge 1$ then by Lemma \ref{f3} ${\rm dim} V^1(a_*\omega_X)=0$, and this together with $\chi(\omega_X)=0$ implies that ${\rm dim} V^0(a_*\omega_X)=0$. Let $\mathcal{F}=R\hat{S}(D_A(a_*\omega_X))$. Then by Theorem \ref{1}, Theorem \ref{p5} and \cite[Corollary 3.5]{HP13} we see that $\mathcal{F}$ is a sheaf in degree $0$ supported on a finite number of points, hence by \cite[Example 2.9]{Mukai81} $V=(-1_A)^*(a_*\omega_X)=RS(D_{\hat{A}}(\mathcal{F}))$ is a vector bundle with a filtration
$$0=V^1\subset V^2\subset ...\subset V^n=V$$
where each $V^j/V^{j-1}$ is an element in ${\rm Pic}^0(A)$. In particular 
$${\rm dim}({\rm Supp}(a_*\omega_X))={\rm dim}({\rm Supp}(V))={\rm dim}(A)$$ 
which implies that ${\rm dim}(A)={\rm dim}(X)$ and hence $a:X\to A$ is surjective.

By Grothendieck-Riemann-Roch theorem we have
\begin{displaymath}
{\rm ch}(a_!\omega_X){\rm td}(T_A)=a_*({\rm ch}(\omega_X){\rm td}(T_X)),
\end{displaymath}
here $\displaystyle a_{!}\omega_X=\sum(-1)^iR^ia_*\omega_X$, and by Theorem \ref{p1} $R^ia_*\omega_X=0$ for $i>0$ so we have $a_!\omega_X=a_*\omega_X$. We compare the terms in $A^1(A)$ on both sides respectively. Note that $(-1_A)^*(a_*\omega_X)=V$ and as $V$ is of the above form we have 
$$c_1(a_*\omega_X)=\sum_{j}c_1(V_j/V_{j-1})\equiv_{\rm num}0.$$ 
We also know that $c_1(T_A)=-K_A=0$ and ${\rm td}(T_A)=1$, so it follows that the $A^1(A)$ part of the left side is numerically equivalent to $0$. Moreover the $A^1(A)$ part of the right side is 
$$a_*(c_1(\omega_X)+\frac{1}{2}c_1(T_X))=a_*(\frac{1}{2}K_X)\equiv_{\rm num}0.$$ 
By the mAd assumption on $X$ and the Enriques-Kodaira classification of surfaces we know $\kappa(X)\ge 0$, so there exists a divisor $D\ge 0$ such that $mK_X\sim D$ for some $m>0$. Next we pass to Stein factorization of $a$ as
$$X\xrightarrow{\tilde{a}}\tilde{A}\xrightarrow{b}A$$
where $\tilde{a}$ is birational and $b$ is finite. By minimality of $X$ we know that $D$ is nef over $\tilde{A}$. By the negativity lemma (cf. \cite[Lemma 3.39]{KM98}) we know $D\le 0$, hence $D=0$, thus $\kappa(X)=0$. Under our assumption of mAd and by the Enriques-Kodaira classification of surfaces this can only happen when $X$ is an abelian surface (cf. \cite[Section 7]{Liedtke13}).
\end{proof}

\section{On the classification of surfaces of general type with Euler characteristic 1}\label{s5}
We begin this section by giving an upper bound for genus and irregularity of surfaces with Euler characteristic $1$. We denote by $p_g(X)$ and $q(X)$ the geometric genus and the irregularity of $X$ respectively (see Definition \ref{Irreg&Euler}) and write $p_g$ and $q$ for short if no confusion can be made.
\begin{proposition}\label{c10}
Let $X$ be a minimal projective surface of general type over $k$ such that $p_g\ge 2$ and ${\rm char}(k)>0$. Then
\begin{align}\label{c11}
K_X^2\ge 2p_g+q-4.
\end{align}
If moreover $X$ is liftable to $W_2(k)$ and $\chi(\mathcal{O}_X)=1$ then $p_g=q\le 4$. 
\end{proposition}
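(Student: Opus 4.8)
The plan is to prove the two assertions separately, the inequality $K_X^2 \ge 2p_g + q - 4$ first and then the bound $p_g = q \le 4$ under the extra hypotheses. For the inequality, the natural approach is to study the canonical map and the Albanese map together. Assume $p_g \ge 2$, and consider a general pencil in $|K_X|$, or more precisely pass to the bicanonical or canonical image and slice. A clean route is: take a general curve $C$ in the movable part of $|K_X|$; by the usual Castelnuovo-type argument, restricting sections of $\omega_X$ to $C$ and using that $\omega_X|_C$ together with the irregularity contributions bound $h^0$, one relates $K_X^2 = K_X \cdot C + (\text{base locus terms})$ to $p_g$ and to the genus of $C$, while the Albanese map forces enough independent holomorphic forms to restrict nontrivially to $C$ to produce the "$+q$" term. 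Concretely I would try to follow the argument that Tong Zhang supplied (as the acknowledgements indicate): decompose $K_X = M + Z$ into movable and fixed parts, estimate $h^0(X,\omega_X) \le h^0(C, M|_C) + (\text{correction})$, and then use $M^2 \ge 0$, $M\cdot Z \ge 0$, $M \cdot K_X \ge M^2$, plus the slope/Clifford-type bound on $M|_C$, to extract $K_X^2 = M\cdot K_X + Z \cdot K_X \ge 2p_g + q - 4$. The main obstacle here is that Hodge-theoretic input (e.g. that forms restrict independently, the usual $h^0(C,\Omega^1_C) \ge q$ type statements via the Albanese) is delicate in characteristic $p$; one must be careful that the morphism to the Albanese behaves well and that no pathology in the canonical system (e.g. inseparability of the canonical map) destroys the count. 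I expect this to be where most of the real work lies, and it is presumably why the inequality is attributed to a separate contributor.

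For the second assertion, assume in addition that $X$ is liftable to $W_2(k)$ and $\chi(\mathcal{O}_X) = 1$. Then $p_g = q$ because $\chi(\mathcal{O}_X) = 1 - q + p_g = 1$. Combining with the first inequality gives $K_X^2 \ge 3q - 4$ (substituting $p_g = q$). On the other hand, by Proposition~\ref{p6} (the Bogomolov--Miyaoka--Yau inequality in characteristic $p$, valid because $X$ is liftable to $W_2(k)$ and $\mathrm{char}(k) \ge 3$) we have $K_X^2 \le 9\chi(\mathcal{O}_X) = 9$. Hence $3q - 4 \le 9$, i.e. $q \le 13/3$, so $q \le 4$, and therefore $p_g = q \le 4$. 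The only subtlety is the edge case $p_g \le 1$: if $p_g \le 1$ then since $p_g = q$ we already have $q \le 1 \le 4$ trivially, so the inequality \eqref{c11} (which requires $p_g \ge 2$) is only needed in the remaining range $p_g \ge 2$, where the argument above applies verbatim.

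So the logical skeleton is: (1) establish \eqref{c11} via a canonical-pencil plus Albanese argument (the hard analytic/geometric step, sensitive to positive characteristic); (2) observe $\chi(\mathcal{O}_X) = 1 \Rightarrow p_g = q$; (3) feed $p_g = q$ into \eqref{c11} to get $K_X^2 \ge 3q-4$; (4) feed liftability into Proposition~\ref{p6} to get $K_X^2 \le 9$; (5) combine to conclude $q \le 4$, handling $p_g \le 1$ trivially. Step (1) is the crux; steps (2)--(5) are a short deduction.
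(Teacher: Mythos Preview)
Your deduction of the second assertion from the first (steps (2)--(5)) is correct and matches the paper verbatim: $\chi(\mathcal{O}_X)=1$ gives $p_g=q$, Proposition~\ref{p6} gives $K_X^2\le 9$, and feeding $p_g=q$ into \eqref{c11} yields $3q-4\le 9$, so $q\le 4$. (The edge case $p_g\le 1$ is moot, since $p_g\ge 2$ is a standing hypothesis of the proposition.)

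For the inequality \eqref{c11} itself, however, the paper takes a different and more elementary route that avoids the Albanese map entirely. One writes $|K_X|=|M|+Z$ with $Z$ the fixed part and takes $D\in |M|$. The restriction sequences for $\mathcal{O}_X(D)$ and $\mathcal{O}_X(2D)$ give maps $r_1\colon H^0(\mathcal{O}_X(D))\to H^0(\mathcal{O}_D(D|_D))$ and $r_2\colon H^0(\mathcal{O}_X(2D))\to H^0(\mathcal{O}_D(2D|_D))$. A multiplication-map argument on the curve $D$ (the addition map $\mathrm{Im}(r_1)\times\mathrm{Im}(r_1)\to\mathrm{Im}(r_2)$ of linear systems has finite fibres) yields $\dim\mathrm{Im}(r_2)\ge 2\dim\mathrm{Im}(r_1)-1$, hence $h^0(2K_X)\ge h^0(2D)\ge 3p_g-3$. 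The $q$-term then drops out of Riemann--Roch for the bicanonical: $h^0(2K_X)=\chi(\mathcal{O}_X)+K_X^2=(1-q+p_g)+K_X^2$, and combining gives $K_X^2\ge 2p_g+q-4$ directly.

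The difference matters. The obstacle you correctly flagged---that in positive characteristic the Albanese dimension can be strictly smaller than $q$, and restriction of $1$-forms need not behave well---is genuine, and your sketched approach would plausibly only reach $K_X^2\ge 2p_g+\dim A-4$ or require extra hypotheses to close the gap. The paper sidesteps this completely by extracting $q$ purely numerically from $\chi(\mathcal{O}_X)=1-q+p_g$, never touching the Albanese or Hodge theory for \eqref{c11}. So your step (1) is not wrong in spirit, but it is the hard way round, and the concern you raised about it is precisely why the paper does something else.
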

\begin{proof}
First we would like to prove \eqref{c11}. \eqref{c11} is well known to experts, but we include a proof for the benifit of the reader. Since we assume $p_g\ge 2$ we can write $|K_X|=M+Z$, where $M$ is a linear series that has no fixed divisors and $Z$ is the fixed part. Note that members in $M$ are not necessarily smooth. We can take $D\in M$ and consider the exact sequence
\begin{align}\label{c7}
0\to\mathcal{O}_X\to\mathcal{O}_X(D)\to\mathcal{O}_D(D|_D)\to 0
\end{align} 
which yields a long exact sequence
\begin{align}\label{c16}
0\to H^0(\mathcal{O}_X)\to H^0(\mathcal{O}_X(D))\xrightarrow{r_1}H^0(\mathcal{O}_D(D|_D))\to ...
\end{align}
After twisting \eqref{c7} by $\mathcal{O}_X(D)$ and take the long exact sequence, we have
\begin{align}\label{c17}
0\to H^0(\mathcal{O}_X(D))\to H^0(\mathcal{O}_X(2D))\xrightarrow{r_2}H^0(\mathcal{O}_D(2D|_D))\to ...
\end{align} 
Now we prove that ${\rm dim}({\rm Im}(r_2))\ge 2{\rm dim}({\rm Im}(r_1))-1$. We have a homomorphism $r: {\rm Im}(r_1)\oplus {\rm Im}(r_1)\to {\rm Im}(r_2)$ induced by the commutativity of the following diagram   
\begin{center}
\begin{tikzpicture}[scale=1.6]
\node (A) at (0,1) {$H^0(\mathcal{O}_X(D))\otimes H^0(\mathcal{O}_X(D))$};
\node (B) at (4,1) {$H^0(\mathcal{O}_D(D|_D))\otimes H^0(\mathcal{O}_D(D|_D))$};
\node (C) at (0,0) {$H^0(\mathcal{O}_X(2D))$};
\node (D) at (4,0) {$H^0(\mathcal{O}_D(2D|_D))$};  
\path[->,font=\scriptsize]
(A) edge node[above]{$r_1\otimes r_1$} (B)
(A) edge node[right]{} (C)
(B) edge node[right]{} (D)
(C) edge node[above]{$r_2$} (D);
\end{tikzpicture} 
\end{center}    
When we view those images as linear systems the map $r_1$ is finite, because divisors on a curve are points, and there are only finitely many ways to separate these points into two parts. Therefore we have 
\begin{align}\label{c18}
{\rm dim}({\rm Im}(r_2))-1\ge 2({\rm dim}({\rm Im}(r_1))-1),
\end{align} 
which is exactly what we want. Next, by Riemann-Roch  
\begin{align}\label{c6}
h^0(\mathcal{O}_X(2K_X))=\chi(\mathcal{O}_X)+K_X^2=p_g-q+1+K_X^2.
\end{align} 
By \eqref{c16}, \eqref{c17} and \eqref{c18} we know that 
\begin{align*}
& h^0(\mathcal{O}_X(2K_X))-h^0(\mathcal{O}_X(K_X))= {\rm dim}({\rm Im}(r_2)) \\
\ge& 2{\rm dim}({\rm Im}(r_1))-1=2h^0(\mathcal{O}_X(K_X))-3.
\end{align*}
Then by \eqref{c6}
\begin{align*}
p_g-q+1+K_X^2= h^0(\mathcal{O}_X(2K_X))\ge 3h^0(\mathcal{O}_X(K_X))-3=3p_g-3.
\end{align*} 
Therefore we have proved \eqref{c11}. If $\chi(\mathcal{O}_X)=1$ and $X$ is liftable to $W_2(k)$ then by Proposition \ref{p6} we have $9\ge K_X^2\ge 2p_g+q-4$ and Proposition \ref{c10} follows easily.
\end{proof}
In this section we will consider surfaces that satisfy the following condition:
\begin{quotation}\label{star}
\item[(*)]$X$ is a projective surfaces that is mAd and lifts to $W_2(k)$, its Picard variety has no supersingular factors and $\chi(\mathcal{O}_X)=1$.
\end{quotation}
\bigskip
The main theorem of this section is as follows.
\begin{theorem}\label{c4}
Let $X$ be a smooth minimal projective surface of general type over an algebraically closed field $k$ of characteristic $\ge 11$ that satisfies {\rm (*)}. Denote the Albanese morphism as $a: X\to A$ and asssume that $a$ is separable. If ${\rm dim}(A)=4$ then $X=C_1\times C_2$ where $C_1$ and $C_2$ are smooth curves and $g(C_1)=g(C_2)=2$.
\end{theorem}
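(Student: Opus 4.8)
The plan is to follow the strategy of Beauville \cite{Beauville82} and Hacon--Pardini \cite{HP02}, using the generic vanishing established in Theorem \ref{1} (equivalently, that $\omega_X$ is $GV_0$) as the replacement for Hodge-theoretic input that is unavailable in positive characteristic. By Proposition \ref{c10}, the hypothesis $\chi(\mathcal{O}_X)=1$ together with liftability to $W_2(k)$ gives $p_g=q\le 4$, and since ${\rm dim}(A)=q=4$ we are exactly in the boundary case $p_g=q=4$; moreover the Bogomolov--Miyaoka--Yau inequality (Proposition \ref{p6}) forces $K_X^2\le 9$, while \eqref{c11} gives $K_X^2\ge 2p_g+q-4=8$, so $K_X^2\in\{8,9\}$. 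The goal is to show $K_X^2=8$ and that $a$ itself (or its Stein factorization) exhibits $X$ as a product of two genus-$2$ curves.

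First I would analyze $V^1(\omega_X)$. By Proposition \ref{p19} it is completely linear, and by Remark \ref{codimV1} it has codimension $\ge 1$ in ${\rm Pic}^0(X)$. Using the numerical/Euler-characteristic constraints together with the Fourier--Mukai description $\mathcal{F}=R\hat S(D_A(a_*\omega_X))$ being a single sheaf in degree $0$ (Theorem \ref{1} + Theorem \ref{p5}), I would run the argument of Propositions \ref{c1}--\ref{c2} referenced in the introduction: study the generic rank of $a_*\omega_X$ (which is the degree of $a$ when $a$ is separable and surjective), bound it, and deduce that either ${\rm dim}\,V^1(\omega_X)$ is large enough to produce, via Lemma \ref{f3}, an irrational pencil $p:X\to B$ with $g(B)\ge {\rm dim}\,V^1(\omega_X)$, or $a_*\omega_X$ is forced to decompose. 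The arithmetic of $\chi=1$, $q=4$ should show that $X$ must carry \emph{two} independent irrational pencils $p_i:X\to B_i$ with $g(B_i)=2$, whose product map $X\to B_1\times B_2$ is, up to the Albanese, the map $a$. Here one uses that an irrational pencil over a curve of genus $g$ contributes a $g$-dimensional piece to $V^1$ and to the Albanese, and that $4=2+2$ is the only way to fill ${\rm dim}(A)=4$ in the boundary case.

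Next I would show the map $\pi=(p_1,p_2):X\to B_1\times B_2$ is birational onto its image and then an isomorphism. Since $g(B_1)=g(B_2)=2$, the surface $C_1\times C_2$ has $p_g=q=4$, $\chi=1$ and $K^2=8$, and the induced map ${\rm Alb}(X)\to {\rm Alb}(B_1)\times{\rm Alb}(B_2)=J(B_1)\times J(B_2)$ is an isogeny onto a $4$-dimensional abelian variety, hence (by $q(X)=4$) an isomorphism; this identifies $A$ with $J(B_1)\times J(B_2)$ and $a$ with the product of the two Abel--Jacobi maps composed with $\pi$. Then $\deg a=\deg\pi\cdot\deg(C_1\times C_2\to A)$; a degree computation using $K_X^2\le 9$, the adjunction/ramification formula for $\pi$, and separability of $a$ forces $\deg\pi=1$ and $K_X^2=8$. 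A birational morphism between smooth minimal surfaces of general type is an isomorphism, so $X\cong C_1\times C_2$ with $C_i=B_i$.

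The main obstacle I expect is the construction of the irrational pencils and the ruling out of the "no pencil" alternative: in characteristic $0$ one produces fibrations from isotropic subspaces of $H^0(\Omega^1_X)$ via the Castelnuovo--de Franchis theorem and Hodge theory, but neither Castelnuovo--de Franchis nor the semisimplicity of the Hodge structure is available here. The substitute is a delicate study of $a_*\omega_X$ and its Fourier--Mukai transform — essentially the content of Propositions \ref{c1}--\ref{c2} alluded to in the introduction — to detect when $\mathcal{F}=R\hat S(D_A(a_*\omega_X))$ is supported on a proper abelian subvariety (which is what an irrational pencil produces) versus when it is a successive extension of points. A secondary difficulty is controlling the behavior of $a$ along its ramification: one needs separability (assumed) and the characteristic $\ge 11$ hypothesis (to have BMY and to avoid small-characteristic pathologies in the pluricanonical/adjunction estimates) to make the degree bookkeeping in the last step go through.
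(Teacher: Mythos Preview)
Your overall architecture is right --- produce two irrational pencils via the analysis of $V^1(\omega_X)$ (Propositions \ref{c1}--\ref{c2}) and then show the product map $X\to C_1\times C_2$ is an isomorphism --- but two of the central steps are either wrong or missing.

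First, the claim that ``$4=2+2$ is the only way to fill $\dim(A)=4$'' and hence both base curves have genus exactly $2$ is not justified. An irrational pencil over a genus-$g$ curve contributes a $g$-dimensional abelian subvariety to $A$, not a $g$-dimensional component to $V^1$; and Proposition \ref{c2} in fact outputs pencils whose base genera satisfy either $(g_1,g_2)\in(\ge 2,\ge 2)$ \emph{or} $(\ge 3,1)$, depending on whether $\dim V^1(\omega_X)$ is $1$, $2$, or $3$. The paper does not extract $g_i=2$ from the structure of $V^1$ at all. Instead it proves an injection $\omega_{C_1}\otimes h_{1,*}\omega_X\hookrightarrow h_{1,*}\omega_X^2$ (Lemma \ref{c5}, using separability of $a$), combines it with $h^0(2K_X)=\chi+K_X^2\le 10$ and $\chi(h_{1,*}\omega_X)\ge 0$ to get $p_a(F)\le 5$ for a general fiber $F$, and then invokes \emph{Arakelov's inequality} $K_X^2\ge 8(g(C_1)-1)(g(F)-1)$ to force $g(C_1)=g(F)=2$, which a posteriori rules out the $(\ge 3,1)$ case. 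Your proposal has no substitute for this step.

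Second, you misidentify where $\operatorname{char}(k)\ge 11$ is used. It is not needed for BMY (Proposition \ref{p6} only requires $p\ge 3$) nor for vague ``pluricanonical/adjunction estimates''. Its sole role is Tate's theorem: if the general fiber $F$ were singular, then $(p-1)/2$ would divide $p_a(F)-p_a(\tilde F)\le 5$, impossible for $p\ge 13$, and the case $p=11$ is checked separately. Without smoothness of $F$ you cannot apply Arakelov's inequality, so this is not a technicality you can defer.

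Finally, your endgame ``a degree computation $\ldots$ forces $\deg\pi=1$'' is too optimistic. After $g(C_i)=g(F)=2$ and $F\cong C_2$, the paper still has to argue that the restriction of $h_2$ to each fiber of $h_1$ is a uniform power of Frobenius composed with a fixed automorphism, then pass to an \'etale base change trivializing the fibration and compare degrees; only then does ``birational between minimal smooth surfaces $\Rightarrow$ isomorphism'' apply. A bare adjunction/ramification count does not obviously give $\deg\pi=1$ here.
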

\begin{remark} \label{c12}
By the main result of \cite{Igusa55} we know ${\rm dim}(A)\le q(X)$, so by Proposition \ref{c10}, ${\rm dim}(A)\le 4$ and ${\rm dim}(A)=4$ implies $p_g(X)=q(X)=4$. 
\end{remark}

Next we will show that under the condition of the above theorem $X$ has at least two distinct fibrations onto curves of certain genera.
\begin{proposition}\label{c1}
Let $X$ be a smooth minimal projective surface over $k$ of positive characteristic that satisfies {\rm (*)}. If ${\rm dim}(A)=4$ then ${\rm dim}(V^1(\omega_X))\ge 1$. In particular $X$ admits an irrational pencil.
\end{proposition}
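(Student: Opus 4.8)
The plan is an argument by contradiction that reduces to excluding the second alternative of Lemma \ref{f3}. Since $X$ satisfies {\rm (*)}, Lemma \ref{f3} gives: either $X$ carries an irrational pencil of genus $\ge {\rm dim}(V^1(\omega_X))\ge 1$, or ${\rm dim}(V^1(\omega_X))=0$. Hence it suffices to prove ${\rm dim}(V^1(\omega_X))\ge 1$, and the ``in particular'' follows. So assume $V^1(\omega_X)$ is a finite set, aiming for a contradiction.

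First I would transport the problem to $A$. Put $\mathcal{G}=a_*\omega_X$ and $g={\rm dim}(A)=4$. By Grauert--Riemenschneider (Theorem \ref{p1}), $R^ja_*\omega_X=0$ for $j>0$, so $H^i(A,\mathcal{G}\otimes P)=H^i(X,\omega_X\otimes a^*P)$ for all $i,P$; since $a^*$ identifies ${\rm Pic}^0(A)=\hat{A}$ with ${\rm Pic}^0(X)$, this gives $V^i(\mathcal{G})=V^i(\omega_X)$. Now I read off the cohomology support loci of $\mathcal{G}$ on the fourfold $\hat{A}$: as $\dim X=2$ one has $V^i(\mathcal{G})=\emptyset$ for $i\ge 3$; by Serre duality on $X$, $H^2(X,\omega_X\otimes a^*P)\cong H^0(X,(a^*P)^\vee)^\vee$, so $V^2(\mathcal{G})=\{\mathcal{O}_A\}$ is a single point and in particular $H^2(A,\mathcal{G})\ne 0$; and $V^1(\mathcal{G})$ is finite by assumption. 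Finally $\chi(\mathcal{G})=\chi(\omega_X)=\chi(\mathcal{O}_X)=1$.

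Next comes the Fourier--Mukai step. By Theorem \ref{1} and Remark \ref{codimV1}, $\mathcal{G}$ is $GV_0$, so by Theorem \ref{p5}(3) the transform $\mathcal{F}:=R\hat S(D_A\mathcal{G})=R^0\hat S(D_A\mathcal{G})$ is a sheaf on $\hat{A}$. Applying Mukai's inversion formula (Theorem \ref{p10}) to $\mathcal{F}=R\hat S(D_A\mathcal{G})$ and using the compatibility of $R\hat S$ with the dualizing functors $D_A,D_{\hat A}$, one obtains $R\hat S(\mathcal{G})\cong(-1_{\hat A})^*D_{\hat A}(\mathcal{F})[-g]$, hence $R^i\hat S(\mathcal{G})\cong(-1_{\hat A})^*\mathcal{E}xt^i_{\hat A}(\mathcal{F},\mathcal{O}_{\hat A})$. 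Comparing with the loci above by cohomology and base change (tracking the hypercohomology spectral sequence at the points of $\hat{A}$) shows $\mathcal{E}xt^i_{\hat A}(\mathcal{F},\mathcal{O}_{\hat A})=0$ for $i\ge 3$, that ${\rm Supp}\,\mathcal{E}xt^i_{\hat A}(\mathcal{F},\mathcal{O}_{\hat A})$ is finite for $i=1,2$, and $\mathcal{E}xt^2_{\hat A}(\mathcal{F},\mathcal{O}_{\hat A})\ne 0$ (from $H^2(A,\mathcal{G})\ne 0$). These are precisely the conditions for $\mathcal{F}$ to satisfy Serre's $S_2$, i.e. to be reflexive. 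On the other hand Grothendieck--Riemann--Roch for $R\hat S$ (using ${\rm td}(\hat{A})=1$) gives ${\rm rk}\,\mathcal{F}=\chi(\mathcal{G})=1$. But a rank-one reflexive sheaf on a smooth variety is a line bundle, so $\mathcal{F}$ would be locally free and $\mathcal{E}xt^2_{\hat A}(\mathcal{F},\mathcal{O}_{\hat A})=0$, contradicting $V^2(\mathcal{G})=\{\mathcal{O}_A\}$. Therefore ${\rm dim}(V^1(\omega_X))\ge 1$, and Lemma \ref{f3} supplies the irrational pencil.

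I expect the main obstacle to be the Fourier--Mukai/duality bookkeeping: fixing the shift and the twist by $(-1_{\hat A})^*$ in $R^i\hat S(\mathcal{G})\cong(-1_{\hat A})^*\mathcal{E}xt^i_{\hat A}(\mathcal{F},\mathcal{O}_{\hat A})$, and extracting from the base-change spectral sequence the two facts really needed --- that ${\rm Supp}\,\mathcal{E}xt^1$ and ${\rm Supp}\,\mathcal{E}xt^2$ are $0$-dimensional (the supports of $R^i\hat S(\mathcal{G})$ and the $V^i(\mathcal{G})$ need not agree at the point $\mathcal{O}_A$, so one must follow the ${\rm Tor}$ contributions of $R^2\hat S(\mathcal{G})$) and that $\mathcal{E}xt^2\ne 0$ --- together with the Chern-character computation ${\rm rk}\,\mathcal{F}=\chi(\mathcal{G})$. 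The commutative-algebra steps (reflexivity via $S_2$, rank-one reflexive $=$ line bundle) are standard.
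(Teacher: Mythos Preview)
Your argument is correct and takes a genuinely different route from the paper's. Both proofs begin the same way: form the sheaf $\mathcal{F}=R\hat S(D_A(a_*\omega_X))$ (which is a sheaf by $GV_0$) and use Mukai inversion to control $RS(\mathcal{F})$ or, equivalently, $R\hat S(\mathcal{G})$. From there the paths diverge. The paper only proves that $\mathcal{F}$ is \emph{torsion-free}, hence of the form $L\otimes\mathcal{I}_Z$ with $Z$ zero-dimensional, and then runs a case split on $Z=\emptyset$ versus $Z\ne\emptyset$: in the first case it invokes Proposition~\ref{p7} (the support of $\widehat{L}$ is an abelian subvariety, incompatible with $a(X)$); in the second, the index $i(L)$ is forced to take two values at once. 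You instead go one step further and show $\mathcal{F}$ is \emph{reflexive}, via the criterion ${\rm codim}\,{\rm Supp}\,\mathcal{E}xt^i(\mathcal{F},\mathcal{O}_{\hat A})\ge i+2$ for $i\ge 1$ (which follows from your identification $R^i\hat S(\mathcal{G})\cong(-1_{\hat A})^*\mathcal{E}xt^i(\mathcal{F},\mathcal{O}_{\hat A})$ together with the finiteness of $V^1,V^2$ and $V^{\ge 3}=\emptyset$). Since $\mathcal{F}$ has rank $\chi(\mathcal{G})=1$, reflexive of rank one on the smooth variety $\hat A$ forces $\mathcal{F}$ to be a line bundle, whence all $\mathcal{E}xt^{\ge 1}$ vanish, contradicting $\mathcal{E}xt^2\cong R^2\hat S(\mathcal{G})\ne 0$ (which you correctly deduce from $H^2(X,\omega_X)\ne 0$ and top-degree base change).

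What each approach buys: yours is shorter and avoids Proposition~\ref{p7} and the index machinery entirely; the trade-off is that you must import the Fourier--Mukai/duality exchange identity $R\hat S\cong(-1_{\hat A})^*D_{\hat A}\circ R\hat S\circ D_A$ (standard, e.g.\ in Pareschi--Popa, but not stated in this paper) and the $\mathcal{E}xt$--codimension criterion for $S_2$. The paper's approach stays closer to the tools already developed in the text (Proposition~\ref{p7} and Theorem~\ref{p10}) at the price of the case analysis. Your self-identified ``main obstacle'' is exactly right: once the shift/sign bookkeeping in the exchange formula is fixed with the paper's convention $D_X(-)=R\mathcal{H}om(-,\omega_X[\dim X])$ (so that $R^i\hat S(\mathcal{G})$ lands in $\mathcal{E}xt^i$), the remaining steps are routine.
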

\begin{proof} 
Suppose that this is not the case. By Lemma \ref{f3} ${\rm dim}(V^1(\omega_X))=0$, so $h^0(X,\omega_X\otimes P)=1$ for all but finitely many $P\in {\rm Pic}^0(X)$. We also know that $a_*\omega_X$ is $GV_0$, so we have
\begin{align*}
& R^0\widehat{S}(D_A(a_*\omega_X))=R\widehat{S}(D_A(a_*\omega_X))\\
=& R\widehat{S}(D_A\circ Ra_*(\omega_X))=R\widehat{S}(Ra_*\circ D_X(\omega_X)) \\
=& R\widehat{S}(Ra_*R\mathcal{H}om(\omega_X, \omega_X[2]))=R\hat{S}Ra_*\mathcal{O}_X[2],
\end{align*}
where the first equality is by Theorem \ref{p5}, the second equality is by Theorem \ref{p1} and the third equality is by Grothendieck duality. This means that $R\hat{S}Ra_*\mathcal{O}_X$ is a sheaf in degree $2$. We claim that $R\hat{S}Ra_*\mathcal{O}_X[2]=L\otimes\mathcal{I}_Z$ where $L$ is a line bundle and $Z$ is a $0$-dimensional subvariety of $X$.

Next we prove the claim. We first prove that $R\hat{S}Ra_*\mathcal{O}_X[2]$ is torsion-free. Denote $R\hat{S}Ra_*\mathcal{O}_X[2]$ by $\mathcal{F}$. At the beginning of the proof we have deduced that $h^0(X,\omega_X\otimes P)=1$ for all but finitely many $P\in {\rm Pic}^0(X)$, and since $\chi(\mathcal{O}_X)=1$ we see that $h^i(X,\omega_X\otimes P)=0$ also for all but finitely many $P\in {\rm Pic}^0(X)$ and any $i>0$. So by cohomology and base change $\mathcal{F}$ is a line bundle except for a finite number of points. By Theorem \ref{p10} the following equality holds 
\begin{align}\label{c9}
(-1_A)^*Ra_*\mathcal{O}_X[-4]=RSR\hat{S}(Ra_*\mathcal{O}_X)=RS(\mathcal{F}[-2]),
\end{align}  
which means $R^0S(\mathcal{F})=R^1S(\mathcal{F})=0$ and $R^2S(\mathcal{F})=a_*\mathcal{O}_X$. On the other hand we have the following exact sequence
\begin{align}\label{c19}
0\to T\to \mathcal{F}\to \mathcal{F}^{\vee\vee}\to Q\to 0
\end{align}
where $T$ is supported on finitely many points. So when we consider the long exact sequence
\begin{displaymath}
0\to R^0S(T)\to R^0S(\mathcal{F})\to...
\end{displaymath}
we get $R^0S(T)=0$ because $R^0S(\mathcal{F})=0$ as above, and since $T$ is supported on finite many points we have $T=0$ and then $\mathcal{F}$ is torsion-free. Since $\mathcal{F}^{\vee\vee}$ is a reflexive sheaf of rank $1$ on a smooth variety, then $\mathcal{F}^{\vee\vee}=L$ is a line bundle, hence $\mathcal{F}=L\otimes\mathcal{I}_Z$ where $Z$ is a $0$-dimensional subscheme.

Now we consider the following short exact sequence:
\begin{displaymath}
0\to L\otimes\mathcal{I}_Z\to L\to L\otimes\mathcal{O}_Z\to 0,
\end{displaymath}
which yields a long exact sequence
\begin{align}\label{c8}
0\to R^0S(L\otimes\mathcal{I}_Z)\to R^0S(L)\to R^0S(L\otimes\mathcal{O}_Z)\to R^1S(L\otimes\mathcal{I}_Z)\to ... 
\end{align}
Among these terms $R^0S(L\otimes\mathcal{I}_Z)=R^1S(L\otimes\mathcal{I}_Z)=0$ and $R^2S(L\otimes\mathcal{I}_Z)=a_*\mathcal{O}_X$ as deduced above (immediately after \eqref{c9}), $R^iS(L\otimes\mathcal{O}_Z)=0$ for $i\ge 1$ because $L\otimes\mathcal{O}_Z$ is supported on a finite number of points, and $R^iS(L)\ne 0$ only for $i=i(L)$, the index of $L$. Denote $R^0S(L\otimes\mathcal{O}_Z)$ by $V$ which is a vector bundle in degree $0$. We get contradiction by considering the following two cases: \\
\emph{Case 1.} $Z=\emptyset$, which is equivalent to $V=0$. In this case $i(L)=2$ so $R^2S(L)= R^2S(L\otimes\mathcal{I}_Z)=a_*\mathcal{O}_X$. This means that the support of $R^2S(L)$ is $a(X)$. But on the other hand by Lemma \ref{p7} the support of $R^2S(L)$ must be an abelian subvariety, and since ${\rm dim}(a(X))=2$ we know that $a(X)$ does not generate $A$. Contradiction.  \\
\emph{Case 2.} $Z\ne\emptyset$, then $V$ is a nonzero vector bundle and $R^2S(L\otimes\mathcal{I}_Z)=a_*\mathcal{O}_X$ which is also nonzero. Thus $R^iS(L)\ne 0$ for $i=0$ and $i=2$ which is impossible as $R^j(L)=0$ for any $j\ne i(L)$.
\end{proof}
\begin{proposition}\label{c2}
Assume that $X$ satisfies all the conditions in Proposition \ref{c1}, then there are two irrational pencils on $X$ over two smooth curves $C_1$, $C_2$ satisfying either one of the following conditions 
\begin{enumerate}
\item both $g(C_1)$ and $g(C_2)$ are $\ge 2$,\label{condition(1)}
\item one of $g(C_1)$ and $g(C_2)$ is $\ge 3$ and the other is $1$, \label{condition(2)}
\end{enumerate}
such that the induced morphism $X\to C_1\times C_2$ is generically finite.
\end{proposition}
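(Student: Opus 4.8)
The plan is to extract one pencil from the positive-dimensionality of $V^1(\omega_X)$ (Proposition~\ref{c1}) and a second one from the fact that $q(X)=\dim A=4$ is large, reading off the genus dichotomy at the end. \emph{First pencil.} Since $\dim V^1(\omega_X)\ge 1$, and $X$ is mAd, lifts to $W_2(k)$, and has Picard variety without supersingular factors, Lemma~\ref{f3} produces an irrational pencil $f_1\colon X\to C_1$ with $b_1:=g(C_1)\ge\dim V^1(\omega_X)\ge 1$. Put $T_1:=f_1^{*}\mathrm{Pic}^0(C_1)\subseteq\mathrm{Pic}^0(X)=\hat A$, an abelian subvariety of dimension $b_1$. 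Two remarks: $b_1\le 3$, because $V^1(\omega_X)\subsetneq\mathrm{Pic}^0(X)$ by Remark~\ref{codimV1}; and, when $b_1\ge 2$, the Leray spectral sequence for $f_1$ together with Serre duality gives an inclusion $H^1(C_1,P^{-1})\hookrightarrow H^1(X,f_1^{*}P^{-1})$ with $\dim H^1(C_1,P^{-1})=b_1-1>0$, whence $T_1\subseteq V^1(\omega_X)$ and therefore $b_1=\dim V^1(\omega_X)$ in that case. (Also $\hat A$ is not simple: otherwise complete linearity of $V^1(\omega_X)$, Proposition~\ref{p19}, together with $V^1(\omega_X)\subsetneq\mathrm{Pic}^0(X)$ would force $\dim V^1(\omega_X)=0$, contradicting Proposition~\ref{c1}.)

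\emph{Second pencil, good case.} By Poincaré reducibility choose an abelian subvariety $T_2\subseteq\hat A$ complementary to $T_1$ (possible since $b_1\le 3$), so $\dim T_2=4-b_1\ge 1$. Dualizing $T_2\hookrightarrow\hat A$ gives a surjection $A\twoheadrightarrow A_2:=\widehat{T_2}$, and $g_2:=(A\to A_2)\circ a\colon X\to A_2$ is non-constant with image generating $A_2$. If $\dim g_2(X)=1$ (automatic when $b_1=3$, since then $\dim A_2=1$), Stein factorization $X\xrightarrow{f_2}C_2\to A_2$ gives an irrational pencil with $C_2$ generating $A_2$, so $g(C_2)\ge\dim A_2=4-b_1$. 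Moreover $T_2\subseteq f_2^{*}\mathrm{Pic}^0(C_2)$ while $T_1\cap T_2$ is finite and $\dim T_1,\dim T_2\ge 1$; hence $f_1$ and $f_2$ are distinct fibrations — if one factored through the other the corresponding sub-tori would be nested, contradicting the finiteness of $T_1\cap T_2$ — and consequently $X\to C_1\times C_2$ is generically finite. Feeding $1\le b_1\le 3$ and $g(C_2)\ge 4-b_1$ into the list, $(g(C_1),g(C_2))$ lands in case~(1) when $b_1=2$, or whenever both genera are $\ge 2$, and in case~(2) when $b_1=1$ (which forces $g(C_2)\ge 3$) or when $b_1=3$ and $g(C_2)=1$.

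\emph{The main obstacle} is the remaining possibility $\dim g_2(X)=2$, i.e. $g_2$ generically finite; this forces $b_1\le 2$, so $\dim A_2=4-b_1\in\{2,3\}$. Here I would use that, by Theorem~\ref{1}, $g_{2,*}\omega_X$ is $GV_0$ on $A_2$, so $V^1(g_{2,*}\omega_X)$ is a proper subvariety of $\mathrm{Pic}^0(A_2)=T_2$; any positive-dimensional component of it yields, via the argument of Lemma~\ref{f3} applied to $g_2$, a second pencil whose associated sub-torus lies in $T_2$ and is therefore distinct from $T_1$. The genuinely delicate point is to control the genus of this second pencil, and in particular to exclude its having genus $1$ while $b_1=2$. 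For this one restricts $g_2$ to a general fibre $F$ of $f_1$: since $a$ is generically finite and $f_1$ contracts $F$, the map $g_2|_F\colon F\to A_2$ is non-constant and finite onto its image $D_F$, and $D_F$ cannot lie in a proper abelian subvariety of $A_2$ — otherwise a fibration of $A_2$ over an elliptic curve would pull back to a pencil of $X$ whose $\mathrm{Pic}^0$ would sit inside the finite group $T_1\cap T_2$. Hence $D_F$ generates $A_2$, $T_2$ injects into the Jacobian of $F$, and — keeping track of the isogeny decomposition of $\hat A$, the self-intersection numbers of the curves $D_F$, and the constraint $\chi(\mathcal O_X)=1$ — one should identify $X$ birationally with a surface in $C_1\times A_2$ that is a product $C_1\times C_2$, producing the second pencil of genus $\ge 2$ and landing again in case~(1). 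This case analysis is where essentially all the work lies, and I expect it to parallel the Fourier--Mukai computations of Proposition~\ref{c1}; in positive characteristic one must moreover avoid relying on Fujita-type semipositivity, whose possible failure is precisely why case~(2), allowing base genus $\ge 3$, is retained.
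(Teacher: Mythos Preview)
Your setup and ``good case'' ($\dim g_2(X)=1$) are correct and match the paper's strategy: split $\hat A$ by the maximal component of $V^1(\omega_X)$, and read off the pencils from the two projections. But the ``main obstacle'' case, which you yourself flag as where essentially all the work lies, is not proved. Two concrete problems:

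\emph{When $b_1=1$} (so $\dim A_2=3$ and $g_2$ is, by hypothesis, generically finite): you propose to extract a second pencil from a positive-dimensional component of $V^1(g_{2,*}\omega_X)$, but you never show such a component exists; and your fibre-restriction sketch does not bound the genus of anything. The paper instead shows directly that $g_2$ \emph{cannot} be generically finite. It fixes a general $P\in\mathrm{Pic}^0(A_1)$, proves (as in Proposition~\ref{c1}) that $R\hat S\,Ra_{2,*}(a_1^*P^\vee)[2]=L\otimes\mathcal I_Z$ for a line bundle $L$ and a $0$-dimensional $Z$, and then runs through the possible values of the index $i(L)\in\{0,1,2,3\}$, each time contradicting either that $a_2(X)$ is $2$-dimensional and generates $A_2$, or the genericity of $P$. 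The case $i(L)=0$ is genuinely delicate and uses Grothendieck duality together with a spectral-sequence computation of $\mathcal Ext^1(a_{2,*}(a_1^*P^\vee),\mathcal O_{A_2})$; nothing in your outline substitutes for this.

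\emph{When $b_1=2$} (so $\dim A_2=2$): here the paper does \emph{not} try to force $\dim g_2(X)=1$. Instead it shows that for general $Q\in\hat A_2$ the fibre $T_Q^1=\{P\in\hat A_1:(P,Q)\in\tilde V^1\}$ is nonempty and $0$-dimensional; nonemptiness is again a Fourier--Mukai argument (if $T_Q^1=\emptyset$ then $R\hat S_{A_1}(a_{1,*}(\omega_X\otimes a_2^*Q))$ is a line bundle, forcing its support to be an abelian subvariety, contradicting that $a_1(X)$ is a generating curve). This produces a \emph{second} $2$-dimensional component $Q_0+\hat B_1$ of $\tilde V^1$ dominating $\hat A_2$, hence complementary to $\hat A_1$, and one reruns Lemma~\ref{f3} for $\hat B_1$ to get the second pencil of genus $\ge 2$. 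Your route via $D_F\subset A_2$ generating $A_2$ is plausible for distinctness of the fibrations but does not by itself manufacture a pencil of the required genus; and your closing line ``one should identify $X$ birationally with $C_1\times C_2$'' is both unproved and overshoots --- that is the content of Theorem~\ref{c4}, which comes only \emph{after} this proposition and needs Arakelov's inequality and the Tate genus-change bound.
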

\begin{proof}
First by Remark \ref{codimV1} and Proposition \ref{c1} we have $1\le {\rm dim}(V^1(\omega_X))\le 3$. We take an irreducible component of maximal dimension in $V^1(\omega_X)$ and denote it as $Q_0+E$, where $Q_0$ is a torsion element and $E$ is an abelian subvariety. By Poincar\'{e}'s complete reducibility theorem we have an isogeny $E\times F\to\hat{A}$ where $F$ is an abelian subvariety of dimension ${\rm dim}(A)-{\rm dim}(V^1(\omega_X))=4-{\rm dim}(V^1(\omega_X))$. After dualizing this map we get the dual isogeny $b:A\to \hat{F}\times \hat{E}$. For convenience we denote $A_1=\hat{E}$, $A_2=\hat{F}$ and denote $pr_{\hat{E}}\circ b\circ a$ and $pr_{\hat{F}}\circ b\circ a$ by $a_1$ and $a_2$ respectively.

Next we prove that $X$ admits two dominant morphisms onto two smooth curves whose geometric genera satisfy \eqref{condition(1)} or \eqref{condition(2)} in Proposition \ref{c2}, and the induced morphism from $X$ to their product is generically finite. We consider the following three cases: \\
\textit{Case 1.} If ${\rm dim}(V^1(\omega_X))=1$, then ${\rm dim}(A_1)=1$ and ${\rm dim}(A_2)=3$. We then prove that $a_2$ induces an irrational pencil of genus $\ge 3$. Let 
$$\tilde{V^1}=\{(P,Q)\in \hat{A_1}\times \hat{A_2}|h^1(X,\omega_X\otimes a_1^*P\otimes a_2^*Q)\ne 0\},$$ 
then there is a finite map $\tilde{V^1}\to V^1=V^1(\omega_X)$ which implies that ${\rm dim}(\tilde{V^1})=1$. Let $P$ be a general element in ${\rm Pic}^0(A_1)$, and let 
$$S_P^1=\{Q\in {\rm Pic}^0(A_2)|(P,Q)\in \tilde{V^1}\}.$$ 
Then $S_P^1=\tilde{V^1}\cap(P\times \hat{A_2})$ is the fiber of the projection map $\tilde{V^1}\to \hat{A_1}$ over $P$. So we have 
$$1={\rm dim}(\tilde{V^1})= {\rm dim}(S_P^1)+{\rm dim}(\hat{A_1})={\rm dim}(S_P^1)+1$$ 
which forces ${\rm dim}(S_P^1)$ to be $0$. 

If $a_2$ is not generically finite then $a_2$ factors through $a(X)$ and the map $a(X)\to a_2(X)$ is an elliptic fibration onto its image. Since $a_2(X)$ generates $A_2$, $X$ is fibered over a curve of genus at least $3$. 

So next we would like to assume that $a_2$ is generically finite and then derive a contradiction. Since ${\rm dim}(S^1_P)=0$ we have that 
$$h^0(X,\omega_X\otimes a_1^*P\otimes a_2^*Q)=h^0(A_2,a_{2,*}(\omega_X\otimes a_1^*P)\otimes Q)=1$$ 
for all but finitely many $Q\in {\rm Pic}^0(A_2)$. By Theorem \ref{1} $a_{2,*}(\omega_X\otimes a_1^*P)$ is $GV_0$, so we have
\begin{align*}
& R^0\widehat{S}(D_{A_2}(a_{2,*}(\omega_X\otimes a_1^*P)))=R\widehat{S}(D_{A_2}(a_{2,*}(\omega_X\otimes a_1^*P)))\\
=& R\widehat{S}(D_{A_2}\circ Ra_{2,*}(\omega_X\otimes a_1^*P))=R\widehat{S}(Ra_{2,*}\circ D_X(\omega_X\otimes a_1^*P)) \\
=& R\widehat{S}(Ra_{2,*}R\mathcal{H}om(\omega_X\otimes a_1^*P, \omega_X[2]))=R\hat{S}Ra_{2,*}(a_1^*P^{\vee})[2],
\end{align*}
where the first equality is by Theorem \ref{p5}, the second equality is by Theorem \ref{p1} and the third equality is by Grothendieck duality. This means $R\hat{S}Ra_{2,*}(a_1^*P^{\vee})$ is a sheaf in degree $2$. If we denote $R\hat{S}Ra_{2,*}(a_1^*P^{\vee})[2]$ by $\mathcal{G}$, then by Theorem \ref{p10} we know 
$$(-1_{\hat{A_2}})^*Ra_{2,*}(a_1^*P^{\vee})[-3]=RS(R\hat{S}Ra_{2,*}(a_1^*P^{\vee}))=RS(\mathcal{G}[-2]),$$
which means $R^0S(\mathcal{G})=0$ and $R^1S(\mathcal{G})=a_{2,*}(a_1^*P^{\vee})$. Then arguing as in the proof of Proposition \ref{c1} starting from \eqref{c19} with $\mathcal{F}$ replaced by $\mathcal{G}$ we see that $\mathcal{G}=L\otimes \mathcal{I}_Z$ where $L$ is a line bundle and $Z$ is supported on a finite set. 

Now we can construct a long exact sequence as \eqref{c8}. In the long exact sequence we have $R^0S(L\otimes\mathcal{I}_Z)=0$, $R^1S(L\otimes\mathcal{I}_Z)=a_{2,*}(a_1^*P^{\vee})$, $R^iS(L)\ne 0$ only for $i=i(L)$ and $R^iS(L\otimes\mathcal{O}_Z)=0$ for $i\ge 1$.

Next we deduce contradiction for all $i(L)$. Since $L$ is supported on $\hat{A_2}$, by Proposition \ref{p7} we see that $i(L)\le {\rm dim}(\hat{A_2})=3$. So we consider $i(L)=3,2,1,0$ respectively.

If $i(L)=2$ or $3$ we can get $V=R^1S(L\otimes\mathcal{I}_Z)=a_{2,*}(a_1^*P^{\vee})$ where $V:=R^0S(L\otimes\mathcal{O}_Z)$. But since $V$ is a vector bundle and $a_{2,*}(a_1^*P^{\vee})$ is a torsion sheaf this is a contradiction. 

If $i(L)=1$, we have a short exact sequence
\begin{displaymath}
0\to V\to a_{2,*}(a_1^*P^{\vee})\to \widehat{L}\to 0
\end{displaymath}
which forces $V=0$ because $a_{2,*}(a_1^*P^{\vee})$ is a torsion sheaf. Then the support of $\widehat{L}$ is $a_2(X)$. We already know from Proposition \ref{p7} that the support of $\widehat{L}$ is an abelian subvariety, and on the other hand according to the above construction of $a_2$ we know $a_2(X)$ is $2$-dimensional and generates $A_2$. This is a contradiction. 

If $i(L)=0$ then we have a short exact sequence
\begin{align}\label{sesV}
0\to \widehat{L}\to V\to a_{2,*}(a_1^*P^{\vee})\to 0.
\end{align}
After taking its dual, we get a long exact sequence
\begin{align}\label{c20}
0& \to a_{2,*}(a_1^*P^{\vee})^{\vee}\to V^{\vee}\to \widehat{L}^{\vee} \\
& \to \mathcal{E}xt^1(a_{2,*}(a_1^*P^{\vee}),\mathcal{O}_{A_2})\to \nonumber \mathcal{E}xt^1(V,\mathcal{O}_{A_2})\to ...
\end{align}
In \eqref{c20} we have $\mathcal{E}xt^1(V,\mathcal{O}_{A_2})=0$ as $V$ is locally free and $a_{2,*}(a_1^*P^{\vee})^{\vee}=0$ as $a_{2,*}(a_1^*P^{\vee})$ is a torsion sheaf. Then \eqref{c20} reduces to a short exact sequence
\begin{align}\label{c15}
0\to V^{\vee}\to \widehat{L}^{\vee}\to \mathcal{E}xt^1(a_{2,*}(a_1^*P^{\vee}),\mathcal{O}_{A_2})\to 0.
\end{align}
Since $\widehat{L}\ne 0$, by \eqref{sesV} we see that $V\ne 0$. Since $L\otimes \mathcal{O}_Z$ is supported on points, by \cite[Example 2.9]{Mukai81} we know $V^{\vee}=\bigoplus_iV_i$ where $V_i$ is a successive extension by elements in ${\rm Pic}^0(A_2)$. If we consider one step of such extension as the exact sequence
\begin{displaymath}
0\to W'\to W\to R\to 0
\end{displaymath}
where $R\in {\rm Pic}^0(A_2)$, then after twisting it by $R^{\vee}$ it becomes 
\begin{displaymath}
0\to W'\otimes R^{\vee}\to W\otimes R^{\vee}\to \mathcal{O}_{A_2}\to 0.
\end{displaymath} 
Then $H^3(A_2,\mathcal{O}_{A_2})\ne 0$ implies that $H^3(A_2,W\otimes R^{\vee})\ne 0$. Following such successive extension we finally get that there exists $P'\in {\rm Pic}^0(A_2)$ such that 
\begin{align}\label{21}
H^3(A_2,V^{\vee}\otimes P')\ne 0. 
\end{align}
Moreover by Theorem \ref{p10} we have
$$R\hat{S}RS(L)=(-1_{\hat{A_2}})^*L[-3],$$
then 
$$R^i\hat{S}(\widehat{L})=0,\forall i\ne 3.$$ 
So by cohomology and base change 
$$H^i(A_2,\widehat{L}\otimes Q)=0,\forall i\ne 3,Q\in {\rm Pic}^0(A_2),$$
in particular
$$H^3(A_2,\widehat{L}^{\vee}\otimes P')=H^0(A_2,\widehat{L}\otimes P'^{\vee})=0.$$ 
This together with \eqref{21} and \eqref{c15} implies that 
\begin{align}\label{Extnonzero}
H^2(A_2,\mathcal{E}xt^1(a_{2,*}(a_1^*P^{\vee}),\mathcal{O}_{A_2})\otimes P')\ne 0.
\end{align}

On the other hand by Grothendieck duality we have
\begin{align*}
R\mathcal{H}om(Ra_{2,*}(a_1^*P^{\vee}),\mathcal{O}_{A_2}[3])=Ra_{2,*}(R\mathcal{H}om(a_1^*P^{\vee},\omega_X[2])),
\end{align*}
hence 
\begin{align}\label{GD}
R\mathcal{H}om(Ra_{2,*}(a_1^*P^{\vee}),\mathcal{O}_{A_2})[1]=Ra_{2,*}(R\mathcal{H}om(a_1^*P^{\vee},\omega_X)),
\end{align}
By Theorem \ref{p1} the right side of \eqref{GD} is just $a_{2,*}(\omega_X\otimes a_1^*P)$, so after taking cohomology of \eqref{GD} in degree $0$ we have 
\begin{align}\label{ExtDerived}
\mathcal{E}xt^1(Ra_{2,*}(a_1^*P^{\vee}),\mathcal{O}_{A_2})=a_{2,*}(\omega_X\otimes a_1^*P).
\end{align}
Now by \cite[(3.7)]{Huybrechts06} we have the following spectral sequence 
\begin{align}\label{SpectralSequenceExt}
E_2^{p,q}:=\mathcal{E}xt^p(R^{-q}a_{2,*}(a_1^*P^{\vee}),\mathcal{O}_{A_2})\Rightarrow \mathcal{E}xt^{p+q}(Ra_{2,*}(a_1^*P^{\vee}),\mathcal{O}_{A_2}).
\end{align}
Since $a_2$ is generically finite onto its image we see that $R^{-q}a_{2,*}(a_1^*P^{\vee})$ is $0$ for $q\le -2$ and $q\ge 1$, and $R^1a_{2,*}(a_1^*P^{\vee})$ is supported on points. Hence $\mathcal{E}xt^p(R^{-q}a_{2,*}(a_1^*P^{\vee}),\mathcal{O}_{A_2})$ is $0$ when $q=-1$ and $p\ne 3$, and $\mathcal{E}xt^3(R^{1}a_{2,*}(a_1^*P^{\vee}),\mathcal{O}_{A_2})$ is supported on points. Therefore the spectral sequence \eqref{SpectralSequenceExt} degenerates at $E_3^{p,q}$ and $\mathcal{E}xt^1(Ra_{2,*}(a_1^*P^{\vee}),\mathcal{O}_{A_2})$ is equal to the kernel of the following natural morphism in the second page
$$d_2^{1,0}: E_2^{1,0}=\mathcal{E}xt^1(a_{2,*}(a_1^*P^{\vee}),\mathcal{O}_{A_2})\to E_2^{3,-1}=\mathcal{E}xt^3(R^1a_{2,*}(a_1^*P^{\vee}),\mathcal{O}_{A_2}).$$
So by \eqref{ExtDerived} we have the following short exact sequence
\begin{align}\label{sesext}
0\to a_{2,*}(\omega_X\otimes a_1^*P)\to \mathcal{E}xt^1(a_{2,*}(a_1^*P^{\vee}),\mathcal{O}_{A_2})\to\mathcal{E}xt^3(R^1a_{2,*}(a_1^*P^{\vee}),\mathcal{O}_{A_2})\to 0.
\end{align} 
Twisting \eqref{sesext} by $P'$ constructed in \eqref{21} we have
\begin{align}\label{sesexttwist}
\begin{split}
0\to a_{2,*}(\omega_X\otimes a_1^*P)\otimes P' & \to \mathcal{E}xt^1(a_{2,*}(a_1^*P^{\vee}),\mathcal{O}_{A_2})\otimes P' \\
& \to\mathcal{E}xt^3(R^1a_{2,*}(a_1^*P^{\vee}),\mathcal{O}_{A_2})\otimes P'\to 0.
\end{split}
\end{align} 

By \eqref{Extnonzero} and the fact that $\mathcal{E}xt^3(R^1a_{2,*}(a_1^*P^{\vee}),\mathcal{O}_{A_2})\otimes P'$ is supported on points we know that $H^2(A_2,a_{2,*}(\omega_X\otimes a_1^*P)\otimes P')\ne 0$. On the other hand 
$$H^2(A_2,a_{2,*}(\omega_X\otimes a_1^*P)\otimes P')=H^2(X,\omega_X\otimes a_1^*P\otimes a_2^*( P'))=H^0(X,a_1^*P^{\vee}\otimes a_2^*(P'^{\vee})),$$ 
but since $P^{\vee}$ is a general element of $A_1$ we have $a_1^*P^{\vee}\otimes a_2^*P'^{\vee}\ne \mathcal{O}_X$, so $H^0(X,a_1^*P^{\vee}\otimes a_2^*(P'^{\vee}))=0$ and we get a contradiction. 

So we have deduced that $a_2$ induces an irrational pencil of genus $\ge 3$. On the other hand since $A_1$ is an elliptic curve $a_1$ must be surjective, otherwise $a(X)$ would be contained in a $3$-dimensional abelian subvariety of $A$ which is impossible. Therefore in this case $a_1$ and $a_2$ induce two dominant morphims we want.\\
\textit{Case 2.} If ${\rm dim}(V^1(\omega_X))=3$, then ${\rm dim}(A_1)=3$ and ${\rm dim}(A_2)=1$. By the proof of Lemma \ref{f3}, $a_1$ induces an irrational pencil of genus $\ge 3$ and by the argument at the end of \textit{Case 1}, $a_2$ induces a dominant morphism onto a curve of geometric genus $\ge 1$. So in this case we also have the two dominant morphisms we claimed. \\
\textit{Case 3.} If ${\rm dim}(V^1(\omega_X))=2$ then ${\rm dim}(A_1)={\rm dim}(A_2)=2$. We define $\tilde{V^1}$ in the same way as in \textit{Case 1} and define 
$$T_Q^1=\{P\in {\rm Pic}^0(A_1)|(P,Q)\in \tilde{V^1}\}$$ 
for a general $Q\in \hat{A_2}$. 

Next we prove that $T_Q^1\ne\emptyset$.  If $T_Q^1=\emptyset$, then
\begin{align}\label{hi=0}
h^i(X,\omega_X\otimes a_2^*Q\otimes a_1^*P)=0,\forall P\in {\rm Pic}^0(A_1), i>0,
\end{align}
so 
$$h^0(A_1,a_{1,*}(\omega_X\otimes a_2^*Q)\otimes P)=h^0(X,\omega_X\otimes a_2^*Q\otimes a_1^*P)=1,\forall P\in {\rm Pic}^0(A_1).$$ 
Consider the Leray spectral sequence 
$$E_2^{ij}=H^i(A_1,R^ja_{1,*}(\omega_X\otimes a_2^*Q)\otimes P)\Rightarrow H^{i+j}(X,\omega_X\otimes a_2^*Q\otimes a_1^*P).$$
We actually have $E_2^{ij}=0$ for $i\ge 2$ or $j\ge 2$. Indeed if $a_1$ is generically finite this is by \eqref{hi=0} and Theorem \ref{p1} and if $a_1$ is not generically finite this is for dimensional reasons. So this spectral sequence degenerates at $E_2$, in particular 
\begin{align*}
& h^1(X,\omega_X\otimes a_2^*Q\otimes a_1^*P) \\
=& h^1(A_1,a_{1,*}(\omega_X\otimes a_2^*Q)\otimes P)+h^0(A_1,R^1a_{1,*}(\omega_X\otimes a_2^*Q)\otimes P) \\
\ge & h^1(A_1,a_{1,*}(\omega_X\otimes a_2^*Q)\otimes P).
\end{align*}
Hence $h^1(A_1,a_{1,*}(\omega_X\otimes a_2^*Q)\otimes P)=0$. So by cohomology and base change $R\hat{S}_{A_1}(a_{1,*}(\omega_X\otimes a_2^*Q))$ is a line bundle in degree $0$ which we denote by $L_1$. By Theorem \ref{p10}, $a_{1,*}(\omega_X\otimes a_2^*Q)=(-1_{A_1})^*\widehat{L_1}$. However the support of $a_{1,*}(\omega_X\otimes a_2^*Q)$ is a curve which spans $A_1$ while by Proposition \ref{p7} the support of $(-1_{A_1})^*\widehat{L_1}$ is an abelian subvariety in $A_1$. This is a contradiction.  

We also have 
$$2={\rm dim}(\tilde{V^1})= {\rm dim}(T_Q^1)+{\rm dim}(\hat{A_2})={\rm dim}(T_Q^1)+2$$ 
which forces ${\rm dim}(T_Q^1)$ to be $0$. This, together with $T_Q^1\ne \emptyset$, implies that in $\tilde{V^1}\subset\hat{A_1}\times\hat{A_2}$ there is a $2$-dimensional component which is a torsion translate of an abelian subvariety and dominates $\hat{A_2}$. We denote this component by $Q_0+\hat{B_1}$, then $\hat{B_1}\cap (\hat{A_1}\times\{0\})$ is a finite number of points. This means that the natural homomorphism $\hat{A_1}\times\hat{B_1}\to \hat{A_1}\times\hat{A_2}$ is an isogeny (cf. proof of \cite[p.160 Theorem 1]{Mumford12}). We denote the following composition of isogenies
$$\hat{A_1}\times\hat{B_1}\to \hat{A_1}\times \hat{A_2}\to\hat{A}$$ 
by $\hat{\varphi}$. Then we have the dual isogeny $\varphi: A\to A_1\times B_1$. By the proof of Lemma \ref{f3} each of the two morphisms $X\to A_1$ and $X\to B_1$ gives a dominant morphism to a curve of geometric genus $\ge 2$. So in this case what we claimed also holds.

Therefore by the argument for the above three cases we have constructed an isogeny $A\to A_1\times A_2$ where $({\rm dim}(A_1), {\rm dim}(A_2))=(3,1)$ or $(2,2)$ (by symmetry we can assume that ${\rm dim}(A_1)\ge {\rm dim}(A_2)$). Each of the projections $\psi_i: A\to A_i$ induces a morphism $g_i: X\to a(X)\to \tilde{C_i}$ where $\tilde{C_i}$ is a smooth curve (we can pass to their normalization if necessary because their geometric genera do not change), and the genera of the two curves $(g(\tilde{C_1}),g(\tilde{C_2}))$ can be either $(m,1)$ or $(n,k)$ where $m\ge 3$ and $n,k\ge 2$. However so far $g_i$ may not satisfy $g_{i,*}\mathcal{O}_X=\mathcal{O}_{\tilde{C_i}}$. Denote the kernel of $\psi_i$ by $K_i$ and the connected component containing the origin by $K_i^0$. By \cite[Proposition 5.31]{MvdG} $(K_i^0)_{\rm red}$ is an abelian subvariety of $A$, so the quotient $A/(K_i^0)_{\rm red}$ exists and the quotient morphism $A\to A/(K_i^0)_{\rm red}$ is separable. For convenience we also use $A_i$ to denote $A/(K_i^0)_{\rm red}$ and use $\varphi_i$ and $h_i:X\to C_i$ to denote the quotient morphism $A\to A/(K_i^0)_{\rm red}$ and the Stein factorization of $\varphi_i\circ a$ respectively, and by passing to normalization we can assume that each $C_i$ is smooth. By construction we have that $h_{i,*}\mathcal{O}_X=\mathcal{O}_{C_i}$ and $g_i$ factors through $h_i$, so by the Hurwitz formula, the genera of $(g(C_1),g(C_2))$ are also either $(m,1)$ or $(n,k)$ where $m\ge 3$ and $n,k\ge 2$. So $h_1$ and $h_2$ are the two irrational pencils we want.
\end{proof}
\begin{proof}[Proof of Theorem \ref{c4}]
Let $h_i:X\to C_i$ be the two irrational pencils constructed above. WLOG we assume that $g(C_1)\ge g(C_2)$, in particular $g(C_1)\ge 2$. We have the following lemma:
\begin{lemma}\label{c5}
There is an injective morphism of sheaves 
\begin{align*}
\omega_{C_i}\otimes h_{i,*}\omega_X\to h_{i,*}\omega_X^2,
\end{align*}
where $i=1,2$.
\end{lemma}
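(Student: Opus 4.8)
\textit{Approach.} The plan is to realize the injection by a pointwise wedge-product on $X$ followed by a push-forward along $h_i$. Since $X$ is a smooth surface, $\omega_X=\wedge^2\Omega^1_X$; also the canonical morphism of differentials $h_i^*\Omega^1_{C_i}\to\Omega^1_X$ provides a morphism of line bundles $\iota\colon h_i^*\omega_{C_i}\to\Omega^1_X$, which is nonzero (because $h_i$ is generically smooth) and hence injective by Lemma \ref{p2}. For a global $1$-form $\eta\in H^0(X,\Omega^1_X)$ I would then consider
$$\psi_\eta\colon\ h_i^*\omega_{C_i}\ \xrightarrow{\ \iota\ }\ \Omega^1_X\ \xrightarrow{\ -\wedge\eta\ }\ \wedge^2\Omega^1_X=\omega_X .$$
Both source and target are line bundles on the integral scheme $X$, so by Lemma \ref{p2} the map $\psi_\eta$ is either zero or injective.

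\textit{Key step.} The heart of the matter is to choose $\eta$ making $\psi_\eta$ nonzero; this amounts to choosing $\eta$ whose image $\eta_\xi$ at the generic point $\xi$ of $X$ avoids the one-dimensional subspace $(h_i^*\omega_{C_i})_\xi\subseteq\Omega^1_{X,\xi}$. Here I would use that $a$ is separable and $X$ is of maximal Albanese dimension. Factoring $a$ through $Z=a(X)$ via $a'\colon X\to Z$, the conormal sequence of $Z\hookrightarrow A$ shows $a^*\Omega^1_A\to a'^*\Omega^1_Z$ is surjective; moreover $\dim Z=2$ and $k(X)/k(Z)$ is finite separable, so $a'^*\Omega^1_Z\to\Omega^1_X$ is an isomorphism at $\xi$. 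Hence $a^*\Omega^1_A\to\Omega^1_X$ is surjective at $\xi$. Since $a^*\Omega^1_A$ is trivial, generated by the pullbacks $a^*\bar\eta$ of the invariant $1$-forms $\bar\eta$ on $A$, the generic stalks of these $1$-forms span the two-dimensional space $\Omega^1_{X,\xi}$, so not all of them lie in the line $(h_i^*\omega_{C_i})_\xi$. Choosing $\eta=a^*\bar\eta$ with $\eta_\xi$ outside this line gives $\psi_\eta\neq0$, hence injective.

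\textit{Conclusion.} With such an $\eta$ fixed, the remainder is formal: tensoring the injection $\psi_\eta\colon h_i^*\omega_{C_i}\hookrightarrow\omega_X$ with the line bundle $\omega_X$ yields an injection $h_i^*\omega_{C_i}\otimes\omega_X\hookrightarrow\omega_X^{\otimes2}$, and applying the left-exact functor $h_{i,*}$ together with the projection formula gives
$$\omega_{C_i}\otimes h_{i,*}\omega_X\ \cong\ h_{i,*}\bigl(h_i^*\omega_{C_i}\otimes\omega_X\bigr)\ \hookrightarrow\ h_{i,*}\omega_X^{\otimes2},$$
which is the morphism asserted by the lemma; the case $g(C_i)=1$ is included (then $h_i^*\omega_{C_i}=\mathcal{O}_X$ and $\iota(1)$ is a pulled-back $1$-form). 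I expect the only genuine difficulty to be the non-vanishing of $\psi_\eta$: in positive characteristic this really uses the separability of the Albanese map, since otherwise all holomorphic $1$-forms on $X$ could be ``vertical'' for $h_i$ and every $\psi_\eta$ would vanish.
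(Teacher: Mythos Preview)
Your approach is essentially the same as the paper's: build an injection $h_i^*\omega_{C_i}\hookrightarrow\omega_X$ by wedging with a well-chosen global $1$-form, then tensor with $\omega_X$ and push forward using the projection formula. The concluding paragraph is identical to the paper's last step.

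There is, however, one unjustified assertion. You claim $\iota\colon h_i^*\omega_{C_i}\to\Omega_X^1$ is nonzero ``because $h_i$ is generically smooth.'' In positive characteristic this is not automatic, and in fact the paper establishes smoothness of the general fibre of $h_1$ only \emph{after} Lemma~\ref{c5}, using Lemma~\ref{c5} as input together with Tate's theorem; so invoking it here would be circular. Your key step breaks down if $\iota_\xi=0$: then the ``line'' $(h_i^*\omega_{C_i})_\xi$ is actually $\{0\}$, and $\psi_\eta=0$ for every $\eta$, regardless of how the pullbacks from $A$ span $\Omega^1_{X,\xi}$. The claim $\iota\neq 0$ is nevertheless true and easy to justify: since $h_i$ is a Stein factorization one has $h_{i,*}\mathcal{O}_X=\mathcal{O}_{C_i}$, so $k(C_i)$ is algebraically closed in $k(X)$; if $\iota_\xi=0$ then $d(k(C_i))=0$ in $\Omega_{k(X)/k}$, forcing $k(C_i)\subset k(X)^p$ and hence $k(C_i)^{1/p}\subset k(X)$, a contradiction. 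With this fix your argument is complete.

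For comparison, the paper sidesteps this point by arguing ``top-down'' via the isogeny $A\to A_1\times A_2$. It decomposes $H^0(A,\Omega_A^2)=\bigwedge^2 H^0(A_1,\Omega^1_{A_1})\oplus\bigwedge^2 H^0(A_2,\Omega^1_{A_2})\oplus\bigl(H^0(A_1,\Omega^1_{A_1})\otimes H^0(A_2,\Omega^1_{A_2})\bigr)$; separability of $a$ produces a $2$-form on $A$ with nonzero pullback, and since anything in $\bigwedge^2 H^0(A_i,\Omega^1_{A_i})$ factors through the curve $C_i$ and hence dies, some mixed term $a^*(\varphi_1^*\omega_1\wedge\varphi_2^*\omega_2)$ is nonzero. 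This simultaneously exhibits a section $h_i^*(j_i^*\omega_i)$ of $h_i^*\omega_{C_i}$ and a $1$-form $a^*\varphi_j^*\omega_j$ with nonzero wedge, so injectivity of $\iota$ and of $\psi_\eta$ come out in one stroke. Your generic-point spanning argument is a bit more streamlined and does not use the product structure $A_1\times A_2$, at the cost of needing the separate observation about $\iota$ above.
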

\begin{proof}
By the above construction we have the following diagram:
\begin{center}
   \begin{tikzpicture}[scale=1.6]
   \node (A) at (0,1) {$X$};
   \node (B) at (2,1) {$a(X)$};
   \node (C) at (4,1) {$A$};
   \node (D) at (2,0) {$C_i$}; 
   \node (E) at (4,0) {$A_i$};    
   \path[->,font=\scriptsize]
   (A) edge node[above]{} (B)   
   (A) edge node[above]{$h_i$} (D)
   (C) edge node[right]{$\varphi_i$} (E);
   \path[->,font=\scriptsize]
   (B) edge node[above]{} (C)
   (D) edge node[above]{$j_i$} (E);
   \end{tikzpicture} 
\end{center}
Now by the separability of $\varphi_i$ we get following injective homomorphisms induced by pullbacks
\begin{align}\label{c13}
H^0(A_i,\Omega_{A_i}^1)\to H^0(A,\Omega_A^1).
\end{align}
Since $H^0(A_i,\Omega_{A_i}^1)\times \mathcal{O}_{A_i}\cong \Omega_{A_i}^1=\mathcal{T}_{A_i}^{\vee}$, we have $H^0(A_i,\Omega_{A_i}^1)\cong T_{A_i,e}^{\vee}$. Then we have 
\begin{align}\label{c14}
H^0(A,\Omega_A^1)\cong H^0(A_1,\Omega_{A_1}^1)\oplus H^0(A_2,\Omega_{A_2}^1)
\end{align} 
and the isomorphism is induced by pullback via $\varphi_1\times\varphi_2$. Then, after wedging $H^0(A,\Omega_A^1)$ with itself, by \eqref{c14} we get
\begin{align*}
& H^0(A,\Omega_A^2)=\wedge^2 H^0(A,\Omega_A^1) \\
=& \wedge^2H^0(A_1,\Omega_{A_1}^1)\oplus\wedge^2 H^0(A_2,\Omega_{A_2}^1)\oplus (H^0(A_2,\Omega_{A_2}^1)\otimes H^0(A_1,\Omega_{A_1}^1)). 
\end{align*}

By separability of $a$ (note that this is the only place where we use separability of $a$) there exists $\omega\in H^0(A,\Omega_A^2)$ such that $0\ne a^*\omega\in H^0(X,\Omega_X^2)$ on $X$. If $\omega\in \wedge^2H^0(A_i,\Omega_{A_i}^1)$ then since the above diagram commutes if we go through the pullback by $j_i\circ h_i$ then $\omega$ must go to $0$ as $C_i$ is $1$-dimensional. Therefore there are two $1$-forms $\omega_i\in H^0(A_i,\Omega_{A_i}^1)$ such that $a^*(\varphi_1\times\varphi_2)^*(\omega_1\boxtimes\omega_2)\ne 0$. This also implies that 
\begin{align*}
H^0(C_i,\omega_{C_i})\otimes H^0(A_j,\Omega_{A_j}^1)\to H^0(X,\omega_X)
\end{align*} 
is nonzero, hence
\begin{align}\label{c22}
h_i^*\omega_{C_i}\otimes H^0(A_j,\Omega_{A_j}^1)\to \omega_X
\end{align} 
induced by pullbacks is nonzero for $(i,j)=(1,2)$ or $(2,1)$. This means that there exists a $\omega_j'\in H^0(A_j,\Omega_{A_j}^1)$ such that the morphism 
\begin{align}\label{c23}
h_i^*\omega_{C_i}\xrightarrow{\wedge\omega_j'} \omega_X
\end{align}
is nonzero, hence by Lemma \ref{p2} it is injective.

Finally after twisting \eqref{c23} by $\omega_X$ and pushing it forward by $h_i$ and using the projection formula we are done. 
\end{proof}
Next we would like to prove that under the condition of Theorem \ref{c4} a general fiber $F$ of $h_1$ is smooth and to do this we need to estimate $p_a(F)$. By the adjunction formula we have 
$$\omega_F=(\omega_X+F)|_F=\omega_X|_F,$$ 
where $\omega_F$ is the dualizing sheaf of $F$. So by \cite[Theorem III.12.8 and Corollary III.12.9]{Hartshorne77} and Serre duality
$${\rm rk}(h_{1,*}\omega_X)=h^0(F,\omega_X|_F)=h^0(F,\omega_F)=h^1(F,\mathcal{O}_F).$$
Then by Riemann-Roch theorem on curves we have
\begin{align}
&\chi(\omega_{C_1}\otimes h_{1,*}\omega_X)=\chi(h_{1,*}\omega_X)+{\rm rk}(h_{1,*}\omega_X)(2g(C_1)-2) \nonumber\\
=& \chi(h_{1,*}\omega_X)+p_a(F)(2g(C_1)-2)\ge \chi(h_{1,*}\omega_X)+2p_a(F).
\label{eq} 
\end{align}
Next we make two observations: \\
1. We estimate the left hand side as follows:
\begin{align*}
& \chi(\omega_{C_1}\otimes h_{1,*}\omega_X)\le h^0(C_1,\omega_{C_1}\otimes h_{1,*}\omega_X)\le h^0(C_1,h_{1,*}(\omega_X^2))\\
=& h^0(X,\omega_X^2)=\chi(\mathcal{O}_X)+K_X^2\le 1+9=10
\end{align*}
where the second and the third inequalities are by Lemma \ref{c5} and Proposition \ref{p6} respectively. \\
2. We claim that $\chi(h_{1,*}\omega_X)\ge 0$. To prove this consider Leray spectral sequence $E_2^{p,q}=H^p(C_1, R^qh_{1,*}\omega_X)\Rightarrow H^{p+q}(X, \omega_X)$. Since $E_2^{p,q}=0$ for $p\ge 2$ for dimensional reasons we know that the spectral sequence degenerates at $E_2$. That means 
\begin{align*}
4=h^1(X,\omega_X)=h^0(C_1,R^1h_{1,*}\omega_X)+h^1(C_1, h_{1,*}\omega_X)\ge h^1(C_1,h_{1,*}\omega_X).
\end{align*} Finally we have $\chi(h_{1,*}\omega_X)=h^0(h_{1,*}\omega_X)-h^1(h_{1,*}\omega_X)\ge 4-4=0$.

Therefore in \eqref{eq} we know $10\ge 2p_a(F)$, so $p_a(F)\le 5$. Then Tate's theorem (cf. \cite[Theorem 5.1]{Liedtke13}) implies that if $F$ is singular then $(p-1)/2$ divides $p_a(F)-p_a(\tilde{F})$, in particular $(p-1)/2\le 5$. So when $p\ge 13$ $F$ is smooth. When $p=11$ and $F$ is singular then it can only happen that $p_a(F)=5$ and $p_a(\tilde{F})=0$. But then $F$ is rational which is contradictory to the assumption that $X$ is of mAd. So when $p=11$, $F$ is also smooth.

Since $X$ is mAd and of general type, by Proposition \ref{p8} $g(F)$ must be no less than $2$. So by a theorem of Arakelov (see \cite[Th\'{e}or\`{e}m d'Arakelov and Corollaire]{Beauville82}) we know
\begin{align*}
9\ge K_X^2\ge 8(g(C_1)-1)(g(F)-1)\ge 8(2-1)(2-1)=8.
\end{align*} 
This forces that $g(C_1)=g(F)=2$, so we see that the case $g(C_1)\ge 3$ cannot happen, in particular $g(C_1)=g(C_2)=2$. Then after we divide the morphism $F\to C_2$ into a separable one and a purely inseparable one, by Hurwitz's formula we have $F\cong C_2$ (cf. \cite[Ch.IV Example 2.5.4 and Proposition 2.5]{Hartshorne77}), and when we restrict $h_2$ to any fiber of $h_1$, say $X_t$ for $t\in C_1$ it is a composition of Frobenius morphisms $F^{e_t}$. Since for any $p\in C_2$ we get that $h_2^*\mathcal{O}_{C_2}(p)\cdot X_t$ is constant with respect to $t\in C_1$. So $e_t=e_1$ is constant with respect to $t$. We also do all the above argument for $h_2:X\to C_2$ and get that the general fiber of $h_2$ is isomorphic to $C_1$ and $h_1$ induces $F^{e_2}$ on these fibers for a uniform $e_2$. 

Denote the induced morphism $X\to C_1\times C_2$ by $f$. There is a generically \'{e}tale morphism $\phi:C_1'\to C_1$ such that $X\times_{C_1}C_1'=C_1'\times C_2$ and the induced map $X\times_{C_1}C_1'\to C_1'$ is the projection onto $C_1'$ (cf. \cite{Jang08} right after Definition 2.3), where $C_1$ is a projective curve. We denote the induced map $X\times_{C_1}C_1'=C_1'\times C_2\to X$ by $\varphi$ and the projections onto $C_1'$ and $C_2$ by $p_1$ and $p_2$ respectively.


Now by the above construction $\varphi$ and $\phi$ are both separable morphisms and ${\rm deg}(\varphi)={\rm deg}(\phi)$. By construction of $h_2$ we know that $h_2\circ \varphi$ is also separable and contracts every $C_1'\times\{c\}$ where $c$ is a general closed point of $C_2$.  So by the Hurwitz formula $h_2\circ \varphi$ restricted to every $\{c'\}\times C_2$, where $c'$ is a general closed point of $C_1'$, is a composition of Frobenius morphisms and an automorphism of $C_2$, in particular on the underlying topological space it is an automorphism of $C_2$. Since ${\rm Aut}(C_2)$ is finite there is a uniform automorphism $\alpha: C_2\to C_2$ such that on the underlying topological spaces, $h_2\circ \varphi=\alpha\circ p_2$. Now we do a Stein factorization of $h_2\circ \varphi$ which we denote by 
$$C_1'\times C_2\xrightarrow{r}C_2\xrightarrow{\beta}C_2.$$ 
The situation is as follows.
\begin{center}
\begin{tikzpicture}[scale=1.6]
\node (A) at (0,1) {$X$};
\node (B) at (2,1) {$C_1\times C_2$};
\node (C) at (-2,0) {$C_1'$};
\node (D) at (2,0) {$C_1$}; 
\node (E) at (-2,1) {$C_1'\times C_2$}; 
\node (F) at (2,2) {$C_2$}; 
\node (G) at (0,2) {$C_2$}; 
\path[->,font=\scriptsize]
(A) edge node[above]{$f$} (B)
(E) edge node[right]{$p_1$} (C)
(B) edge node[right]{} (D)
(A) edge node[above]{$h_1$} (D)
(E) edge node[above]{$r$} (G)
(G) edge node[above]{$\beta$} (F)
(A) edge node[above]{$h_2$} (F)
(E) edge node[above]{$\varphi$} (A)
(C) edge node[above]{$\phi$} (D)
(B) edge node[right]{} (F);
\end{tikzpicture} 
\end{center}

Since $h_2\circ \varphi$ is separable we know that $\beta$ is separable and hence $\beta\in{\rm Aut}(C_2)$. By definition of Stein factorization we have $(\beta\circ r)_*\mathcal{O}_{C_1'\times C_2}=\mathcal{O}_{C_2}$, and we have seen above that the map of the underlying topological spaces for $\beta\circ r$ is $\alpha\circ p_2$. So $h_2\circ \varphi=\alpha\circ p_2$ as a morphism of schemes. On the other hand $\varphi\circ h_1=\phi\circ p_1$, so $f\circ \varphi=(\phi\circ p_1)\times (\alpha\circ p_2)$ and in particular it is separable with degree ${\rm deg}(\phi)$, which is equal to the degree of $\varphi$. This implies that the degree of $f$ is $1$, hence $f$ is birational. Finally by smoothness and minimality of $X$ we have $X=C_1\times C_2$.
\end{proof}
Proposition \ref{c1} has told us that if $X$ satisfies (*) then ${\rm dim}(V^1(\omega_X))=0$ can only happen when ${\rm dim}(A)\le 3$. We end this section with a result for such case.
\begin{proposition}\label{c21}
Let $X$ be a smooth minimal projective surface which satisfies {\rm (*)}. Let $a:X\to A$ be the Albanese morphism. Assume that the degree of $a$ is coprime with ${\rm char}(k)$, ${\rm dim}(A)=3$ and ${\rm dim}(V^1(\omega_X))=0$ then the map $b:X\to a(X)$ induced by $a$ is birational.
\end{proposition}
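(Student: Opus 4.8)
The plan is to run the Fourier--Mukai machine of Propositions \ref{c1}--\ref{c2} and then use separability at the very end. Put $\mathcal{F}:=R^0\hat{S}(D_A(a_*\omega_X))$. Since $X$ satisfies {\rm (*)}, Theorem \ref{1} together with Theorem \ref{p5} gives that $a_*\omega_X$ is $GV_0$, so $\mathcal{F}=R\hat{S}(D_A(a_*\omega_X))$ is an honest sheaf, and Grothendieck duality together with $D_X(\omega_X)=\mathcal{O}_X[2]$ and Theorem \ref{p1} yields $\mathcal{F}[-2]=R\hat{S}(Ra_*\mathcal{O}_X)$. The hypotheses ${\rm dim}\,V^1(\omega_X)=0$ and $\chi(\mathcal{O}_X)=1$ force $h^0(X,\omega_X\otimes a^*P)=1$ and $h^i(X,\omega_X\otimes a^*P)=0$ for $i>0$ and $P$ outside a finite subset of ${\rm Pic}^0(X)$; then, exactly as in the proof of Proposition \ref{c1} (cohomology and base change shows $\mathcal{F}$ is locally free of rank $1$ off finitely many points, while its torsion subsheaf $T$ satisfies $R^0S(T)\hookrightarrow R^0S(\mathcal{F})=0$ and is supported on points, hence $T=0$), one concludes $\mathcal{F}$ is torsion-free of generic rank $1$, so $\mathcal{F}=L\otimes\mathcal{I}_Z$ for a line bundle $L$ on $\hat{A}$ and a zero-dimensional subscheme $Z$.

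Next I would apply $RS$ to $0\to L\otimes\mathcal{I}_Z\to L\to L\otimes\mathcal{O}_Z\to 0$. From $RS\circ R\hat{S}=(-1_A)^*[-3]$ (Theorem \ref{p10}) one gets $RS(\mathcal{F})=(-1_A)^*Ra_*\mathcal{O}_X[-1]$; since $L\otimes\mathcal{O}_Z$ is supported on points, $V:=R^0S(L\otimes\mathcal{O}_Z)$ is locally free and $R^{>0}S(L\otimes\mathcal{O}_Z)=0$; and by Proposition \ref{p7} $R^jS(L)\ne 0$ only for $j=i(L)$, with support an abelian subvariety of $A$. The long exact sequence then gives $R^3S(L)\cong R^3S(\mathcal{F})=0$, $R^2S(L)\cong R^2S(\mathcal{F})=(-1_A)^*R^1a_*\mathcal{O}_X$, and an exact sequence $0\to R^0S(L)\to V\to (-1_A)^*a_*\mathcal{O}_X\to R^1S(L)\to 0$. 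I then run through the possible values of $i(L)$. The value $i(L)=3$ is impossible because $R^3S(L)=0$. If $i(L)=2$ then $R^2S(L)$ is supported on finitely many points and on an abelian subvariety, hence on $\{0\}$; by the description in Proposition \ref{p7} this forces $L$ non-degenerate, so $R^2S(L)$ would be locally free of positive rank on all of $\hat{A}$, a contradiction. If $i(L)=1$ then $R^0S(L)=0$, so $V$ embeds into $(-1_A)^*a_*\mathcal{O}_X$; since $V$ is locally free on the threefold $A$ but $-a(X)$ is two-dimensional, $V=0$, hence $Z=\emptyset$ and $R^1S(L)\cong(-1_A)^*a_*\mathcal{O}_X$, whose support $-a(X)$ would then be a two-dimensional abelian subvariety of $A$ --- impossible since $a(X)$ generates $A$. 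Finally $Z=\emptyset$ in general also forces $R^1S(L)\cong(-1_A)^*a_*\mathcal{O}_X\ne 0$, i.e. $i(L)=1$, and the same contradiction. Hence $i(L)=0$ and $Z\ne\emptyset$; comparing $R^2S(L)=0$ with $R^2S(\mathcal{F})=(-1_A)^*R^1a_*\mathcal{O}_X$ gives the crucial vanishing $R^1a_*\mathcal{O}_X=0$.

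Now the endgame. Since $R^1a_*\mathcal{O}_X=0$ we have $Ra_*\mathcal{O}_X=a_*\mathcal{O}_X$, so $R\hat{S}(a_*\mathcal{O}_X)=\mathcal{F}[-2]$ is concentrated in degree $2$ with $R^2\hat{S}(a_*\mathcal{O}_X)=\mathcal{F}=L\otimes\mathcal{I}_Z$, which is indecomposable, being torsion-free of rank $1$ on the integral variety $\hat{A}$. On the other hand factor $a=\iota\circ b$ with $\iota:a(X)\hookrightarrow A$ the inclusion; since ${\rm deg}(b)={\rm deg}(a)$ is coprime to ${\rm char}(k)$, the morphism $b$ is separable and $\tfrac{1}{{\rm deg}(b)}{\rm Tr}$ splits $\mathcal{O}_{a(X)}\hookrightarrow b_*\mathcal{O}_X$, so $a_*\mathcal{O}_X=\iota_*\mathcal{O}_{a(X)}\oplus\iota_*\mathcal{Q}$ as $\mathcal{O}_A$-modules with $\mathcal{Q}$ of generic rank ${\rm deg}(b)-1$ on $a(X)$. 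Applying the equivalence $R\hat{S}$ gives a direct-sum decomposition $\mathcal{F}=R^2\hat{S}(\iota_*\mathcal{O}_{a(X)})\oplus R^2\hat{S}(\iota_*\mathcal{Q})$ (each summand again concentrated in degree $2$, being a direct summand of $\mathcal{F}[-2]$). The first summand is nonzero, and if ${\rm deg}(b)\ge 2$ then $\mathcal{Q}\ne 0$ and the second summand is nonzero too, contradicting indecomposability of $\mathcal{F}$. Therefore ${\rm deg}(b)=1$ and $b:X\to a(X)$ is birational.

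I expect the genuinely delicate step to be the case analysis on $i(L)$ in the second paragraph: one must make the support statement of Proposition \ref{p7} precise for a possibly degenerate $L$ (tracking $i(L)$, the index of the non-degenerate factor, and ${\rm dim}\,K(L)^0$), and handle the cohomology-and-base-change arguments carefully enough to extract the exact sequences and, above all, the vanishing $R^1a_*\mathcal{O}_X=0$, which is precisely what makes the clean splitting argument at the end available.
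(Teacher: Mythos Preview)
Your argument is essentially correct and takes a route genuinely different from the paper's. The paper works on the $\omega$-side: it normalizes $a(X)$, blows up so that the induced map $u:\widetilde{X'}\to a(X)'$ becomes finite, and uses the coprimality hypothesis (via \cite[Proposition~5.7(2)]{KM98}) to split $\omega_{a(X)'}$ off $u_*\omega_{\widetilde{X'}}$. If $b$ were not birational the complementary summand $\mathcal{F}$ would be nonzero, and an Euler-characteristic count---invoking the proof of Theorem~\ref{f2}, which shows that a $GV_0$ summand with $\chi=0$ and all $V^i$ zero-dimensional forces $\dim A=2$---yields $\chi(\omega_X)\ge 2$, a contradiction. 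Your route stays on the $\mathcal{O}$-side: the case analysis on $i(L)$ extracts the vanishing $R^1a_*\mathcal{O}_X=0$ (an extra dividend the paper does not obtain), and then you conclude by the clean algebraic fact that the rank-one torsion-free sheaf $L\otimes\mathcal{I}_Z$ on the integral scheme $\hat A$ is indecomposable. The paper's approach avoids the delicate $i(L)$ discussion and recycles machinery already packaged in Theorem~\ref{f2}; yours is more self-contained but leans harder on Proposition~\ref{p7}.

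There is one genuine, though easily repaired, imprecision in your endgame: the trace map $b_*\mathcal{O}_X\to\mathcal{O}_{a(X)}$ you invoke need not land in $\mathcal{O}_{a(X)}$ unless $a(X)$ is normal---the trace of an integral element is merely integral over $\mathcal{O}_{a(X)}$. This is exactly why the paper passes to the normalization. The fix in your framework is straightforward: factor $b$ through the normalization $c:\widetilde{a(X)}\to a(X)$, take the Stein factorization of the induced $\mu:X\to\widetilde{a(X)}$ to obtain a finite map $g:Y''\to\widetilde{a(X)}$ of degree $\deg(b)$ between normal varieties, and split $\mathcal{O}_{\widetilde{a(X)}}$ off $g_*\mathcal{O}_{Y''}=\mu_*\mathcal{O}_X$ by $\tfrac{1}{\deg g}\mathrm{Tr}$. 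Pushing forward along the finite map $\iota\circ c:\widetilde{a(X)}\to A$ gives $a_*\mathcal{O}_X=(\iota c)_*\mathcal{O}_{\widetilde{a(X)}}\oplus(\iota c)_*\mathcal{Q}'$, and your indecomposability argument then goes through verbatim.
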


\begin{proof}
Let $c:\widetilde{a(X)}\to a(X)$ be the normalization of $a(X)$ and $\mu:X\to \widetilde{a(X)}$ the induced morphism. By \cite[Th\'{e}or\`{e}me 5.2.2]{GR71} there is a blow-up $p:a(X)'\to \widetilde{a(X)}$ such that the induced morphism $\mu' :X':=X\times_{\widetilde{a(X)}}a(X)'\to a(X)'$ is finite. Let $\nu: \widetilde{X'}\to X'$ be the normalization of $X'$ and $u:=\mu'\circ\nu$. Since $X$ is smooth we know that $\widetilde{X'}$ is klt, in particular it has rational singularities (cf. \cite{Elkik81}). The situation is as follows.
\begin{center}
\begin{tikzpicture}[scale=1.6]
\node (A) at (0,0) {$a(X)'$};
\node (B) at (2,0) {$\widetilde{a(X)}$};
\node (C) at (4,0) {$a(X)$};
\node (D) at (-2,1) {$\widetilde{X'}$}; 
\node (E) at (0,1) {$X'$}; 
\node (F) at (2,1) {$X$}; 
\path[->,font=\scriptsize]
(A) edge node[above]{$p$} (B)
(B) edge node[above]{$c$} (C)
(D) edge node[above]{$\nu$} (E)
(E) edge node[above]{$q$} (F)
(D) edge node[above]{$u$} (A)
(E) edge node[left]{$\mu'$} (A)
(F) edge node[left]{$\mu$} (B)
(F) edge node[above]{$b$} (C);
\end{tikzpicture} 
\end{center}

It is easy to see that ${\rm deg}(b)={\rm deg}(\mu)={\rm deg}(\mu')={\rm deg}(u)$, so $({\rm deg}(u),{\rm char}(k))=1$. By \cite[Proposition 5.7(2)]{KM98} we have that $\mathcal{O}_{a(X)'}$ is a direct summand of $u_*\mathcal{O}_{\widetilde{X'}}$, then by Grothendieck duality $\omega_{a(X)'}^{\bullet}$ is a direct summand of $Ru_*\omega_{\widetilde{X'}}$. By relative Kawamata-Viehweg vanishing (cf. \cite[2.2.5]{KK}), $R^iu_*\omega_{\widetilde{X'}}=0$ for $i\ne 0$, so $Ru_*\omega_{\widetilde{X'}}=u_*\omega_{\widetilde{X'}}$ is a sheaf and $\omega_{a(X)'}$ is a direct summand of it.

Assume that $b$ is not birational. Then the other direct summand is not trivial and we denote it by $\mathcal{F}$, i.e. $u_*\omega_{\widetilde{X'}}=\omega_{a(X)'}\oplus\mathcal{F}$. Then 
$$(b\circ q\circ \nu)_*\omega_{\widetilde{X'}}=(c\circ p\circ u)_*\omega_{\widetilde{X'}}=(c\circ p)_*\omega_{a(X)'}\oplus (c\circ p)_*\mathcal{F},$$ 
and 
$$1=\chi(\omega_X)=\chi(\omega_{\widetilde{X'}})=\chi((b\circ q\circ \nu)_*\omega_{\widetilde{X'}})=\chi((c\circ p)_*\omega_{a(X)'})+\chi((c\circ p)_*\mathcal{F})$$ 
as $b\circ q\circ \nu$ is generically finite.

Now we prove that neither $\chi((c\circ p)_*\mathcal{F})$ nor $\chi((c\circ p)_*\omega_{a(X)'})$ is $0$. Otherwise, without loss of generality we assume $\chi((c\circ p)_*\mathcal{F})=0$. Since 
$$0={\rm dim}(V^i(\omega_X))={\rm dim}(V^i(\omega_{\widetilde{X'}}))={\rm dim}(V^i((b\circ q\circ \nu)_*\omega_{\widetilde{X'}}))\ge {\rm dim}(V^i((c\circ p)_*\mathcal{F}))$$ 
for $i=1,2$ we know ${\rm dim}(V^1((c\circ p)_*\mathcal{F}))={\rm dim}(V^2((c\circ p)_*\mathcal{F}))=0$. This together with $\chi((c\circ p)_*\mathcal{F})=0$ implies that ${\rm dim}(V^0((c\circ p)_*\mathcal{F}))=0$.
Then following the first paragraph of the proof of Theorem \ref{f2} we can get that ${\rm dim}(A)=2$ which is a contradiction to the assumption ${\rm dim}(A)=3$.
 
So $\chi((c\circ p)_*\mathcal{F})$ and $\chi((c\circ p)_*\omega_{a(X)'})$ are both $\ge 1$. But this implies that $\chi(\omega_{X})\ge 2$ which is impossible. So $\mathcal{F}=0$ and $u$ is birational, hence $b$ is birational.
\end{proof}
\bibliographystyle{alpha}
\bibliography{P}  
\end{document}